\definecolor{Chocolat}{rgb}{0.173, 0.216, 0.230}
\definecolor{BleuTresFonce}{rgb}{0.215, 0.215, 0.36}
\tikzset{>=latex}
\providecommand\@dotsep{5}
\renewcommand{\listoftodos}[1][\@todonotes@todolistname]{%
  \@starttoc{tdo}{#1}}
\newtheorem*{theorem*}{Theorem}
\newtheorem{theorem}{Theorem}[section]
\newtheorem{lemma}[theorem]{Lemma}
\newtheorem{proposition}[theorem]{Proposition}
\theoremstyle{definition}  
\newtheorem{definition}[theorem]{Definition}
\newtheorem{example}[theorem]{Example}
\newtheorem{remark}[theorem]{Remark}
\newcommand{\Ad}{\operatorname{Ad}}
\newcommand{\on}{\operatorname}
\newcommand{\kk}{\mathbf{k}}
\newcommand{\Z}{{\mathbb Z}}
\newcommand{\Q}{{\mathbb Q}}
\newcommand{\C}{{\mathbb C}}
\newcommand{\KK}{\mathbf{k}}
\newcommand{\Pa}{\mathbf{Pa}}
\newcommand{\PaB}{\mathbf{PaB}}
\newcommand{\PaCD}{\mathbf{PaCD}}
\renewcommand{\CD}{\mathbf{CD}}
\renewcommand{\t}{{\mathfrak{t}}}
\newcommand{\n}{{\mathfrak{n}}}
\newcommand{\f}{{\mathfrak{f}}}
\renewcommand{\i}{\on{i}}
\newcommand{\Assoc}{\mathbf{Assoc}}
\newcommand{\GT}{\mathbf{GT}}
\newcommand{\GRT}{\mathbf{GRT}}
\newcommand{\cM}{{\mathcal M}}
\tikzstyle cross=[preaction={draw=white, -, line width=4pt}, thick]
\tikzstyle normal=[thick]
\tikzstyle chord=[densely dotted, thick]
\tikzstyle zero=[ultra thick, gray]
\tikzstyle zell=[ultra thick, white]
\tikzstyle zerocross=[preaction={draw=white, -, line width=4pt}, ultra thick, gray]
\tikzstyle point=[draw,circle,inner sep=1,fill=black]
\tikzstyle grospoint=[draw,circle,inner sep=5,fill=black]
\tikzstyle diam=[draw,diamond,inner sep=1,fill=black]
\tikzstyle petitpoint=[draw,circle,inner sep=0.3,fill=black]
\newcommand{\straight}[3][-]{\draw[normal,#1] (#2,-#3) -- (#2,-#3-1);}
\newcommand{\fixch}[1]{\draw[zero] (0,-#1) -- (0,-#1-1);}
\newcommand{\hori}[4][-]{\draw[normal,#1] (#2,-#3)--(#2,-#3-1);\draw[normal,#1] (#2+#4,-#3)--(#2+#4,-#3-1);\draw[chord] (#2,-#3-0.5)--(#2+#4,-#3-0.5);}
\newcommand{\tzero}[3][-]{\draw[zero] (0,-#2)--(0,-#2-1);\draw[normal,#1] (#3,-#2)--(#3,-#2-1);\draw[chord] (0,-#2-0.5)--(#3,-#2-0.5);}
\newcommand{\tik}[1]{\begin{tikzpicture}[baseline=(current bounding box.center)] #1 \end{tikzpicture} }
\newcommand{\tmop}[1]{\ensuremath{\operatorname{#1}}}
\newcommand{\assign}{:=}
\newcommand{\tmtextit}[1]{{\itshape{#1}}}
\title{A moperadic approach to cyclotomic associators}
\author{Damien Calaque and Martin Gonzalez}
\address{Damien CALAQUE \newline \indent IMAG, Univ Montpellier, 
CNRS, Montpellier, France}
\email{damien.calaque@umontpellier.fr}
\address{Martin GONZALEZ \newline \indent Institut de Recherche Technologique SystemX, Palaiseau, France.}
\email{martin.gonzalez@irt-systemx.fr}
\begin{document}

\begin{abstract}
We provide a (m)operadic description of Enriquez's torsor of cyclotomic associators, 
as well as of its associated cyclotomic Grothendieck--Teichm\"uller groups. 
\end{abstract}

\maketitle

\setcounter{tocdepth}{1}

\tableofcontents

\section*{Introduction}

Since the introduction of associators and Grothendieck--Teichm\"uller groups by Drinfeld \cite{DrGal}, 
several variations of these have been considered; for instance 
\begin{itemize}
\item following Grothendieck's esquisse \cite{Esquisse}, Lochak--Nakamura--Schneps defined a new version of the 
Grothendieck--Teichm\"uller group \cite{LNS-new}, which acts on more general surface mapping class groups than 
Drinfled's original one;
\item cyclotomic \cite{En} and elliptic \cite{En2} variants of associators and Grothendieck--Teichm\"uller groups 
have been defined by Enriquez; 
\item ellipsitomic associators, which share both the features of cyclotomic and elliptic associators, have recently 
been introduced by the authors \cite{CaGo2}. 
\end{itemize}

\medskip

It is known, after the original insight of Drinfeld \cite{DrGal} and Bar-Natan \cite{BN}, and thanks to the detailed 
proof of Fresse \cite{Fresse}, that the torsor of associators can be understood as the torsor of isomorphisms between two 
operads in groupoids. A similar result holds for Enriquez's torsor of elliptic associators as well, as was recently proven 
in \cite{CaGo2}, where one has to consider operadic modules instead of just operads. This need comes from the fact that, 
while compactified configuration spaces of points in the plane form an operad, compactified configuration spaces of points 
in a torus form an operadic module on the latter. In \cite{CaGo2}, ellipsitomic associators are defined as 
operadic module isomorphisms, and the description \textit{\`a la Drinfeld} is derived from it afterwards. 

\medskip

In this companion paper to \cite{CaGo2}, we prove that Enriquez's cyclotomic associators torsor (resp.~Grothendieck--Teichm\"uller groups) can also be identified with isomorphisms (resp.~automorphisms) of operadic gadgets. The appropriate notion here is the one 
of a moperad; it was introduced by Willwacher \cite{Will16}, and it typically encodes the structure of compactified configuration 
spaces of points in the punctured plane (or, equivalently, the annulus). 

\medskip

After two reminders, on moperads (Section \ref{A short reminder on operads}) on the one hand, and associators 
(Section \ref{Operads associated with configuration spaces (associators)}) on the other hand, we introduce in Section 
\ref{Frozen} the moperad $\PaB^1$ of parenthesized braids with a frozen strand 
(obtained as the fundamental groupoid of the configuration moperad of points in the punctured plane) and provide 
a generators and relations presentation for $\PaB^1$:
\begin{theorem*}[Theorem \ref{PaB1}]
The moperad in groupoid $\PaB^1$ is generated by an arity $1$ arrow $E$ and an arity $2$ arrow $\Psi$, with relations 
\eqref{eqn:cU}, \eqref{eqn:MP}, \eqref{eqn:RP}, and \eqref{eqn:O}. 
\end{theorem*}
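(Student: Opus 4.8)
The plan is to adapt, to the moperadic setting, the strategy by which the operad $\PaB$ is presented by its associativity constraint $a$ and its braiding $\beta$ — a result we take as known, being the operadic reformulation of Mac Lane's coherence theorem together with Artin's presentation of the braid groups, as rigorously established by Fresse. Recall that $\PaB^1$ is the fundamental groupoid of the (compactified) configuration moperad of points in the punctured plane, equivalently the annulus: its objects in arity $n$ are the parenthesizations of $x_1\cdots x_n$ together with the frozen strand, and — just like the objects of the free moperad on a single arity-$1$ generator — each of them is obtained from the arity-$1$ object by iterated moperadic insertions and the operadic action of $\PaB$; the automorphism group of such an object is $\pi_1$ of the configuration space of $n$ points in $\C^\times$, that is, the group of braids of $n$ strands moving freely around one frozen strand. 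I would then define a morphism of moperads in groupoids $\Phi$, from the moperad presented over $\PaB$ by the generators $E$, $\Psi$ and the relations \eqref{eqn:cU}, \eqref{eqn:MP}, \eqref{eqn:RP}, \eqref{eqn:O} to $\PaB^1$, sending $E$ to the loop of the single movable point around the puncture and $\Psi$ to the arity-$2$ arrow in which one movable strand winds once around the frozen strand in a fixed parenthesization. That $\Phi$ is well defined amounts to checking the four relations in $\PaB^1$, and since each of them is a closed loop in a configuration space that visibly bounds a disc, this is a finite list of explicit and routine computations with braids.

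Surjectivity of $\Phi$ is immediate on objects, and on morphisms I would establish it by induction on the arity $n$, using the fibration that forgets the last movable point: its fibre is a punctured plane carrying $n-1$ further punctures, whose fundamental group is generated, on the one hand, by the elementary braidings of the last strand around the other movable points — these belong to the image of $\Phi$ since they arise from $\beta\in\PaB(2)$ and the nested associators $a\in\PaB(3)$ through the operadic action, by the known presentation of $\PaB$ — and, on the other hand, by the class of the last strand winding around the puncture, which is a moperadic composite of $\Psi$ (or, in arity one, of $E$) with trees built from $a$. Together with the inductive hypothesis this shows that $\pi_1$ of every $\PaB^1(n)$ is generated by composites of $E$, $\Psi$ and of the image of $\PaB$, so $\Phi$ is surjective.

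The remaining, and essential, point is the injectivity of $\Phi$. Here I would fix a presentation of the groups of braids around one frozen strand that is compatible with the moperad structure — the Artin generators $\sigma_1,\dots,\sigma_{n-1}$ for the free braidings of the movable strands, together with a single hole-winding generator $\tau$, subject to the braid relations among the $\sigma_i$, the commutation of $\tau$ with $\sigma_i$ for $i\geq 2$, and the one mixed relation $\tau\sigma_1\tau\sigma_1=\sigma_1\tau\sigma_1\tau$ — and then show, after translating each of these into parenthesized arrows by inserting the appropriate associativity constraints, that it is a consequence of \eqref{eqn:cU}, \eqref{eqn:MP}, \eqref{eqn:RP}, \eqref{eqn:O} together with the pentagon and hexagon relations of $\PaB$. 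The braid relations among the $\sigma_i$ come straight from the $\PaB$-presentation; the commutation of $\tau$ with the far-away $\sigma_i$ follows from the locality of operadic composition once \eqref{eqn:MP} and \eqref{eqn:RP}, which govern how $\Psi$ interacts with re-bracketing and with the braiding, are in hand; the eight-letter mixed relation is a re-parenthesized form of the octagon \eqref{eqn:O}; and \eqref{eqn:cU} is the unitality/cabling identity reconciling $E$ with the moperadic composites of $\Psi$, so that nothing further is required to move between arities.

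The main obstacle I anticipate is exactly this last step — not any single verification, but the bookkeeping needed to guarantee that the list \eqref{eqn:cU}, \eqref{eqn:MP}, \eqref{eqn:RP}, \eqref{eqn:O} is \emph{complete}: that it reproduces, uniformly in $n$ and compatibly with all the moperadic operations, a genuine presentation of the entire tower of braid groups with a frozen strand, and that the parenthesizations (hence the associators $a$ inserted) are correctly tracked each time a strand is carried around the puncture. A clean way to organize this is to extract from the generation argument a normal form for the arrows of $\PaB^1$ in terms of $E$, $\Psi$, $a$, $\beta$, and then to compare, arity by arity, with the already-known size of these braid groups, forcing the presented moperad to map injectively onto $\PaB^1$.
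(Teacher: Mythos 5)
Your proposal is correct and follows essentially the same route as the paper: construct the morphism from the moperad presented by $E$, $\Psi$ and \eqref{eqn:cU}, \eqref{eqn:MP}, \eqref{eqn:RP}, \eqref{eqn:O} into $\PaB^1$ by checking the four relations pictorially, then reduce the isomorphism claim arity by arity to automorphism groups, whose identification rests on the Artin-type presentation of the braid group of the annulus $\on{B}_n^1$. The only real difference is that the paper outsources your anticipated ``main obstacle'' --- the completeness of the relation list --- to Enriquez's result that the universal braided module category realizes $\on{B}_n^1$ together with the Sossinsky--Vershinin presentation of handlebody braid groups, rather than re-deriving the type~$B$ relations from the moperad relations by hand as you propose.
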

Unsurprisingly, these relations are completely analogous to the axioms for braided module categories from \cite{Adrien}; 
indeed, one can verify that a braided module category is nothing but a representation of $\PaB^1$ in categories. 
In Section \ref{Cyclo}, we decorate the unfrozen strands of our parenthesized braids with elements from a finite quotient 
$\Gamma=\mathbb{Z}/N\mathbb{Z}$ of the fundamental group of the punctured plane, giving rise to a moperad in groupoids 
$\PaB^\Gamma$. We show that $\PaB^\Gamma$ admits a presentation by generators and relations similar to the one of $\PaB^1$ 
(Theorem \ref{thm-gamma-pres}), and thus identify the group of $\Gamma$-equivariant automorphisms of $\PaB^\Gamma$ that 
are the identity on objects with Enriquez's cyclotomic Grothendieck--Teichm\"uller group (Proposition \ref{Cyc:GT}). 
Finally, in Section \ref{Assocyclo}, we put a moperad structure on the (parenthesized) horizontal $N$-chord diagrams of 
\cite{Adrien}, and prove the following
\begin{theorem*}[Theorem \ref{Ass:cyc:iso}]
The set of $\Gamma$-equivariant moperad isomorphisms that are the identity on objects between $\PaB^\Gamma$ 
and parenthesized $N$-chord diagrams is in bijection with Enriquez's cyclotomic associators.  
\end{theorem*}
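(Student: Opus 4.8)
The strategy is to reduce the statement to the generators-and-relations presentation of $\PaB^\Gamma$ provided by Theorem \ref{thm-gamma-pres}. A $\Gamma$-equivariant moperad isomorphism $F\colon\PaB^\Gamma\to\PaCD^\Gamma$ that is the identity on objects is, in particular, a moperad morphism; since $\PaB^\Gamma$ is generated, as a moperad over the operad $\PaB$, by its underlying operad together with the arity $1$ arrow $E$ and the arity $2$ arrow $\Psi$, such an $F$ is completely determined by the underlying operad morphism $\PaB\to\PaCD$ and by the two elements $F(E)$ and $F(\Psi)$ in the relevant Hom-groups of $\PaCD^\Gamma$. Conversely, by the universal property of a presentation, any assignment of these generators for which the images of the defining relations of $\PaB$ and of \eqref{eqn:cU}, \eqref{eqn:MP}, \eqref{eqn:RP}, \eqref{eqn:O} hold in $\PaCD^\Gamma$ extends uniquely to a moperad morphism; moreover it is automatically an isomorphism, since the Hom-groups involved are torsors over pro-unipotent groups of group-like series, so that the induced maps are bijective (exactly as in the classical case, cf.\ \cite{Fresse}). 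Hence ``$\Gamma$-equivariant moperad isomorphism that is the identity on objects'' is the same datum as such a constrained assignment, and the theorem amounts to identifying this datum with a cyclotomic associator.

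First I would treat the operadic part. By the Bar-Natan--Fresse theorem recalled in Section \ref{Operads associated with configuration spaces (associators)} (see \cite{BN,Fresse}), an identity-on-objects operad morphism $\PaB\to\PaCD$ is precisely a Drinfeld associator $\Phi$ (together with its normalization parameter): the image of the associativity constraint is the trivial one twisted by $\Phi(t_{12},t_{23})$, the image of the braiding is the trivial one twisted by $\exp(t_{12}/2)$, and the pentagon and hexagon axioms become the pentagon and hexagon equations for $\Phi$. So the operadic part of $F$ contributes exactly the Drinfeld-associator component of a cyclotomic associator.

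Next I would analyse the moperadic generators. Being the identity on objects forces $F(E)$ into the automorphism group of the arity $1$ object and $F(\Psi)$ into the Hom-group between two fixed arity $2$ objects of $\PaCD^\Gamma$; computing these Hom-groups — they are torsors over the pro-unipotent groups of group-like elements of the relevant (infinitesimal cyclotomic braid) $N$-chord-diagram algebras — shows that $F(E)$ is a group-like element of the arity $1$ part (equivalently, a scalar parameter constrained by equivariance) and $F(\Psi)$ is a fixed arity $2$ arrow twisted by a group-like series. Together with $\Phi$, this produces a candidate element of Enriquez's set, and $\Gamma$-equivariance of $F$ translates precisely into the compatibility of these data with the cyclic action on the decorations. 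It then remains to match the constraints: applying $F$ to \eqref{eqn:cU} gives the normalization of $F(E)$; \eqref{eqn:O} becomes Enriquez's octagon equation relating $F(E)$, the braiding and $\Phi$; \eqref{eqn:MP} becomes the mixed pentagon equation for $F(\Psi)$ and $\Phi$; and \eqref{eqn:RP} becomes the remaining reflection-type compatibility. I would establish these by writing each relation of $\PaB^\Gamma$ in terms of the moperad operations, pushing it through $F$, and reading off the resulting identity in the $N$-chord-diagram algebras, carefully tracking the normalization parameters inherited from the operadic part. Conversely, starting from a cyclotomic associator one defines $F$ on generators by the same formulas and invokes the presentation; the two constructions are visibly mutually inverse, which gives the desired bijection.

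The main obstacle I anticipate is precisely this last dictionary: verifying that the graphical moperad relations \eqref{eqn:MP}, \eqref{eqn:RP}, \eqref{eqn:O}, transported into $\PaCD^\Gamma$, reproduce Enriquez's defining equations on the nose — which requires getting right the combinatorics of the $\Gamma$-decorations, the choice of basepoints, and the normalization conventions carried over from the operadic part. A secondary, more routine, point is the explicit description of the arity $1$ and arity $2$ Hom-groups of $\PaCD^\Gamma$ needed to pin down the form of $F(E)$ and $F(\Psi)$. Both are of the same nature as, though somewhat heavier than, their counterparts in the classical Bar-Natan--Fresse argument and in the ellipsitomic case of \cite{CaGo2}, which provide the template.
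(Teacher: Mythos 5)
Your proposal follows essentially the same route as the paper: reduce to the presentation of $\PaB^\Gamma$ from Theorem \ref{thm-gamma-pres}, write $F(E)$ and $F(\Psi)$ as group-like twists of the standard arrows $L^{0,1_{\bar0}}$ and $b^{0,1_{\bar0},2_{\bar0}}$, and translate the defining relations into Enriquez's equations. Two of your anticipated dictionary entries come out differently in practice, though: the unit relation \eqref{eqn:tU} does not normalize $F(E)$ but rather kills the central component of the twist $v$ in $F(\Psi)$ (the normalization $u=e^{\frac{\mu}{N}t_{01}}$ of $F(E)$ is instead extracted from the octagon combined with the hexagon for $(\mu,\varphi)$, cf.\ Lemma \ref{lambdamu}); and the reflection relation \eqref{eqn:tRP} imposes no constraint at all, being automatically satisfied for arbitrary $u$ and $v$ by centrality of $t_{01}+t_{02}+\sum_\gamma t_{12}^\gamma$ (Lemma \ref{RP:PACD}). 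Also, your claim that any constrained generator assignment is \emph{automatically} an isomorphism is slightly too strong: invertibility requires $\mu\in\kk^\times$, which is why the target set consists of triples with $(\mu,\varphi)\in\on{Ass}(\kk)$ rather than arbitrary solutions of the equations.
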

We moreover show that this identification respects the (bi)torsor structures (Theorem \ref{thm-last-torsor}). 

\subsection*{Acknowledgements}
We are deeply grateful to Adrien Brochier for numerous conversations and suggestions. Discussions with Benjamin Enriquez 
have also been very helpful. 
The first author has received funding from the Institut Universitaire de France, and from the European Research Council 
(ERC) under the European Union's Horizon 2020 research and innovation programme (Grant Agreement No. 768679). 
This paper is extracted from the second author's PhD thesis \cite{G1} at Sorbonne Universit\'e, and part of this 
work has been done while the second author was visiting the Institut Montpelli\'erain Alexander Grothendieck,
thanks to the financial support of the Institut Universitaire de France. The second author warmly thanks the 
Max-Planck Institute for Mathematics in Bonn and Université d'Aix-Marseille, for their hospitality and excellent working conditions.

\subsection*{Convention}
All along the paper, $\kk$ is a field of characteristic zero. 

We also warn the reader that we use all along the paper the following rather unusual convention for arrows in a groupoid, and more 
generally in a category: we often concatenate arrows rather than composing them. In other words, $f_1f_2 = f_2 \circ f_1$. 

\section{Moperads}
\label{A short reminder on operads}

In this section we fix a symmetric monoidal category $(\mathcal C,\otimes,\mathbf{1})$ 
having small colimits and such that $\otimes$ commutes with these. 
We borrow the notation and convention for $\mathfrak{S}$-modules and 
operads from \cite{CaGo2}.


\subsection{Moperads over an operad}\label{sec-mop}

Let ${\mathcal O}$ be an operad. A \textit{moperad} over $\mathcal O$ is an $\mathfrak{S}$-module ${\mathcal P}$ carrying 
\begin{itemize}
\item a unital monoid structure for the monoidal product $\otimes$, 
\item and a right $\mathcal O$-module structure for the monoidal product $\circ$, 
that are compatible in the following sense: 
\begin{itemize}
\item One first observes that, for every $\mathfrak{S}$-module $\mathcal Q$, there is a 
natural morphism $\mathcal Q\otimes(\mathcal P\circ\mathcal O)\to(\mathcal Q\otimes\mathcal P)\circ\mathcal O$. 
\item Then the compatibility means that the following diagram commutes: 
\[
\xymatrix{
\mathcal P\otimes(\mathcal P\circ\mathcal O) \ar[r] \ar[d] & \mathcal P \otimes\mathcal P \ar[rd] & \\
(\mathcal P\otimes\mathcal P)\circ\mathcal O \ar[r] & \mathcal O\circ\mathcal P \ar[r] & \mathcal P
}
\]
\end{itemize}
\end{itemize}
The map $\mathcal P\otimes(\mathcal P\circ\mathcal O)\to \mathcal P$ 
one obtains decomposes into maps 
$$
\mathcal P(k)\otimes \mathcal P(m_0)\otimes\mathcal O(m_1)\otimes\cdots
\otimes\mathcal O(m_k)\to\mathcal P(m_0+\cdots+m_k)
$$
satisfying certain associativity, unit and $\mathfrak{S}$-equivariance relations. 
We let the reader spell out these conditions explicitely.

Within the symmetric monoidal category of differential graded vector spaces, this 
definition coincides with Willwacher's one from \cite{Will16} (from which we borrowed 
the name ``moperad''). Note that the monoid structure for the monoidal product 
$\otimes$ encodes precisely the partial composition with respect to the second colour. 
We will denote this partial composition by $\circ_0$. 


\subsection{Example of a moperad over an operad: coloured Stasheff polytopes}

To any finite set $I$ we associate the configuration space 
\[
\textrm{Conf}(\mathbb{R}_{>0},I)=\{\mathbf{x}=(x_i)_{i\in I}\in 
(\mathbb{R}_{>0})^I|x_i\neq x_j\textrm{ if }i\neq j\}
\]
and its reduced version 
\[
\textrm{C}(\mathbb{R}_{>0},I):=\textrm{Conf}(\mathbb{R}_{>0},I)/\mathbb{R}_{>0}\,.
\]
The Axelrod--Singer--Fulton--MacPherson compactification 
$\overline{\textrm{C}}(\mathbb{R}_{>0},I)$ 
of $\textrm{C}(\mathbb{R}_{>0},I)$ is a disjoint union of $|I|$-th Stasheff 
polytopes with two kinds 
of colours, indexed by $\mathfrak S_I$. The boundary 
\[
\partial\overline{\textrm{C}}(\mathbb{R}_{>0},I):=\overline{\textrm{C}}
(\mathbb{R}_{>0},I)-\textrm{C}(\mathbb{R}_{>0},I)
\]
is the union, over all partitions $I=J_0\coprod J_1\coprod\cdots\coprod J_k$, of 
$$
\partial_{J_0,\cdots,J_k}\overline{\textrm{C}}(\mathbb{R}_{>0},I):=\overline{\rm C}(\mathbb{R}_{>0},k) 
\times \overline{\rm C}(\mathbb{R}_{>0},J_0)\times\prod_{i=1}^k\overline{\rm C}(\mathbb{R},J_i)\,.
$$
The inclusion of boundary components provides $\overline{\rm C}(\mathbb{R}_{>0},-)$ with the structure 
of a $\overline{\rm C}(\mathbb{R},-)$-moperad in topological spaces. 

One can see that $\overline{\textrm{C}}(\mathbb{R}_{>0},I)$ is a manifold with corners, and that considering 
only zero-dimensional strata of our configuration spaces we get a sub-moperad 
$\mathbf{Pa_0}\subset\overline{\textrm{C}}(\mathbb{R}_{>0},-)$ that can be briefly described as follows: 
\begin{itemize}
\item $\mathbf{Pa_0}(I)$ is the set of pairs $(\sigma,p)$, where $\sigma$ is a linear order on $I$ and $p$ 
a maximal parenthesization of $\circ\!\!\!\underbrace{\bullet\cdots\bullet}_{|I|~{\rm times}}$. 
\item The $\overline{\rm C}(\mathbb{R},-)$-moperad structure is given by substitution as above. 
\end{itemize}
Forgetting the $\overline{\rm C}(\mathbb{R},-)$-moperad structure on $\overline{\textrm{C}}(\mathbb{R}_{>0},-)$ 
and considering a $\overline{\rm C}(\mathbb{R},-)$-module structure on it amounts to forbidding points to 
be close to the origin (i.e. the 0-strand cannot be inside a parenthesis in this case). 


\subsection{Pointing}\label{sec-pointings}

Recall the operad $Unit$ defined by 
$$
Unit(n):=\begin{cases}
\mathbf{1} &\mathrm{if}~n=0,1 \\
\emptyset &\mathrm{else}
\end{cases}
$$
By convention, all our operads $\mathcal O$ will be pointed in the sense that they will come equipped with 
a specific operad morphism $Unit\to\mathcal O$. 
Morphisms of operads are required to be compatible with this pointing. Actually, all operads appearing in 
this paper are such that $\mathcal O(n)\simeq\mathbf{1}$ if $n=0,1$. 

\medskip

Similarly, we intoduce the moperad $MUnit$ over $Unit$, which is such that $MUnit(n)=\mathbf{1}$ for all 
$n\geq0$. By convention, all our moperads will be pointed, in the sense that they will come equipped with 
a specific $Unit$-moperad morphism $MUnit\to \mathcal Q$. Morphisms of moperads are required to be 
compatible with the pointing. 
\begin{remark}
In the category of sets, $MUnit$ is the sub-$Unit$-moperad of $\mathbf{Pa_0}$ that consists only of the 
left-most maximal parenthesization. 
\end{remark}

\medskip

The main reason for this rather strange convention is that we need the following features, that we 
have in the case of compactified configuration spaces: 
\begin{itemize}
\item For operads and moperads, we want to have ``deleting operations'' $\mathcal O(n)\to\mathcal O(n-1)$ 
that decrease arity. 
\item For a moperad, we want to be able to ``see the operad inside'' it, i.e.\ we want to have a 
distinguished morphism $\mathcal O\to\mathcal P$ of $\mathfrak{S}$-modules. 
\end{itemize}

\begin{example}
For instance, being a $\mathbf{Pa}$-moperad, $\mathbf{Pa_0}$ comes equipped with a morphism of 
$\mathfrak{S}$-modules $\mathbf{Pa}\to\mathbf{Pa_0}$. 
It sends a parenthesized permutation $\mathbf{p}$ to $\circ(\mathbf{p})$. 
\end{example}


\subsection{Group actions}\label{sec-gpaction}

Let $G$ be a group and $\mathcal O$ be an operad. We say that an $\mathcal O$-module $\mathcal P$ 
carries a $G$-action if 
\begin{itemize}
\item for every $n\geq0$, $G^n$ acts $\mathfrak{S}_n$-equivariantly on $\mathcal P(n)$, from the left. 
\item for every $m\geq0$, $n\geq0$, and $1\leq i\leq n$, the partial composition 
$$
\circ_i:\mathcal P(n)\otimes\mathcal O(m)\longrightarrow \mathcal P(n+m-1)
$$
is equivariant along the group morphism 
\begin{eqnarray*}
G^n & \longrightarrow & G^{n+m-1} \\
(g_1,\dots, g_n) & \longmapsto & (g_1,\dots, g_{i-1},
\underbrace{g_i,\dots,g_i}_{m~\mathrm{times}},g_{i+1},\dots,g_n)
\end{eqnarray*}
\end{itemize}

If $\mathcal P$ is a moperad, we additionally require that, for every $m\geq0$, and $n\geq0$, the partial composition 
$$
\circ_0:\mathcal P(n)\otimes\mathcal P(m)\longrightarrow \mathcal P(n+m)
$$
is $G^{n+m}$-equivariant. 

\medskip

A morphism $\mathcal P\to\mathcal Q$ of $\mathcal O$-moperads with $G$-action is said to be 
\textit{$G$-equivariant} if, for every $n\geq0$, the map $\mathcal P(n)\to\mathcal Q(n)$ 
is $G^n$-equivariant. 

\section{Reminders on associators and G(R)T}
\label{Operads associated with configuration spaces (associators)}

In this section we recollect some results from \cite{DrGal,BN,Fresse}, following essentially 
the presentation of \cite[Section 2]{CaGo2}. 


\subsection{Compactified configuration space of the plane}

To any finite set $I$ we associate the (reduced) configuration space 
\[
\text{\gls{CCn}}:=
\{\mathbf{z}=(z_i)_{i\in I}\in \mathbb{C}^I|z_i\neq z_j\textrm{ if }i\neq j\}/\mathbb{C}\rtimes\mathbb{R}_{>0}\,
\]
of points in the plane. 
We then consider its Axelrod--Singer--Fulton--MacPherson compactification \gls{FMCn}, whose boundary 
\[
\partial\overline{\textrm{C}}(\mathbb{C},I)=\overline{\textrm{C}}(\mathbb{C},I)-\textrm{C}(\mathbb{C},I)
\]
is made of irreducible components $\partial_{J_1,\cdots,J_k}\overline{\textrm{C}}(\mathbb{C},I)$ indexed 
by partitions $I=J_1\coprod\cdots\coprod J_k$ of $I$: 
\[
\partial_{J_1,\cdots,J_k}\overline{\textrm{C}}(\mathbb{C},I)\cong \overline{\rm C}(\mathbb{C},k)\times
\prod_{i=1}^k\overline{\rm C}(\mathbb{C},J_i)\,.
\]
The inclusion of boundary components provides $\overline{\rm C}(\mathbb{C},-)$ with the structure of an 
operad in topological spaces. 


\subsection{The operad of parenthesized braids}\label{sec-pab}

The inclusions of topological operads 
$$
\mathbf{Pa}\,\subset\,\overline{\textrm{C}}(\mathbb{R},-)\,\subset\,\overline{\textrm{C}}(\mathbb{C},-)\,
$$
allow us to define 
$$
\text{\gls{PaB}}:=\pi_1\left(\overline{\textrm{C}}(\mathbb{C},-),\mathbf{Pa}\right)\,,
$$
which is an operad in groupoids. 
\begin{example}[of arrows in small arity]\label{ex2.1}
Recall from \cite[Examples 2.1]{CaGo2} that, in arity two, there is an arrow $R^{1,2}$ going from $(12)$ to $(21)$, that can be depicted 
in the following ways: 
\begin{center}
\begin{tikzpicture}[baseline=(current bounding box.center)]
\tikzstyle point=[circle, fill=black, inner sep=0.05cm]
 \node[point, label=above:$1$] at (1,1) {};
 \node[point, label=below:$2$] at (1,-0.25) {};
 \node[point, label=above:$2$] at (2,1) {};
 \node[point, label=below:$1$] at (2,-0.25) {};
  \draw[->,thick] (1,1) .. controls (1,0.25) and (2,0.5).. (2,-0.20);
 \node[point, ,white] at (1.5,0.4) {};
 \draw[->,thick] (2,1) .. controls (2,0.25) and (1,0.5).. (1,-0.20); 
\end{tikzpicture}
\qquad\qquad\qquad
\begin{tikzpicture}[baseline=(current bounding box.center)]
\tikzstyle point=[circle, fill=black, inner sep=0.05cm]
 \node[point, label=right:$2$] at (0.5,-0.5) {};
 \node[point, label=left:$1$] at (-0.5,0.5) {};
 \draw[->,thick] (0.5,-0.5) .. controls (-0.25,-0.25) .. (-0.45,0.45); 
 \draw[->,thick] (-0.5,0.5) .. controls (0.25,0.25) .. (0.45,-0.45); 
\end{tikzpicture}
\end{center}
There is another arrow $\tilde R^{1,2}:=(R^{2,1})^{-1}$, having the same source and target, that can 
be depicted as an undercrossing braid. 

In arity three, there is an arrow $\Phi^{1,2,3}$, going from $(12)3$ to $1(23)$, that can be depicted 
in the following ways: 
\begin{center}
\begin{tikzpicture}[baseline=(current bounding box.center)]
\tikzstyle point=[circle, fill=black, inner sep=0.05cm]
 \node[point, label=above:$(1$] at (1,1) {};
 \node[point, label=below:$1$] at (1,-0.25) {};
 \node[point, label=above:$2)$] at (1.5,1) {};
 \node[point, label=below:$(2$] at (3.5,-0.25) {};
 \node[point, label=above:$3$] at (4,1) {};
 \node[point, label=below:$3)$] at (4,-0.25) {};
 \draw[->,thick] (1,1) .. controls (1,0) and (1,0).. (1,-0.20); 
 \draw[->,thick] (1.5,1) .. controls (1.5,0.25) and (3.5,0.5).. (3.5,-0.20);
 \draw[->,thick] (4,1) .. controls (4,0) and (4,0).. (4,-0.20);
\end{tikzpicture}
\qquad\qquad\qquad
\begin{tikzpicture}[baseline=(current bounding box.center)] 
\tikzstyle point=[circle, fill=black, inner sep=0.05cm]
 \node[point, label=below:$1$] at (0.5,0.5) {};
 \node[point, label=below:$2$] at (1,0.5) {};
 \node[point, label=below:$3$] at (2.5,0.5) {};
 \draw[->,thick] (1,0.5) .. controls (1,0.5) .. (2,0.5); 
\end{tikzpicture}
\end{center}
\end{example}
Grothendieck's insight \cite{Esquisse} about the (genus $0$) (Grothendieck--)Teichm\"uller tower, 
later proven by Drinfeld \cite{DrGal}, can be understood as a generators and relations presentation for $\PaB$. 
This was made more explicit by Bar-Natan \cite{BN} in a different language, and written in term of operads by Tamarkin \cite{Tam} and Fresse 
\cite[Theorem 6.2.4]{Fresse}. Following the convention and notation from \cite[Section 2]{CaGo2}, it reads as follows
\begin{theorem}\label{thm2.3}
As an operad in groupoids having $\mathbf{Pa}$ as operad of objects, $\mathbf{PaB}$ is generated by 
$R:=R^{1,2}$ and $\Phi:=\Phi^{1,2,3}$ together with the following relations: 
\begin{flalign}
& \Phi^{1,\emptyset,2}=\on{Id}_{1,2}									\quad 
\Big(\mathrm{in}~\mathrm{Hom}_{\mathbf{PaB}(2)}\big(12,12\big)\Big)\,, 				\tag{U}\label{eqn:U} \\
& R^{1,2}\Phi^{2,1,3}R^{1,3}=\Phi^{1,2,3}R^{1,23}\Phi^{2,3,1} 											\quad 
\Big(\mathrm{in}~\mathrm{Hom}_{\mathbf{PaB}(3)}\big((12)3,2(31)\big)\Big)\,, 				\tag{H1}\label{eqn:H1} \\
& \tilde R^{1,2}\Phi^{2,1,3}\tilde R^{1,3}=\Phi^{1,2,3}\tilde R^{1,23}\Phi^{2,3,1} 											\quad 
\Big(\mathrm{in}~\mathrm{Hom}_{\mathbf{PaB}(3)}\big((12)3,2(31)\big)\Big)\,, 					\tag{H2}\label{eqn:H2} \\
& \Phi^{12,3,4}\Phi^{1,2,34}=\Phi^{1,2,3}\Phi^{1,23,4}\Phi^{2,3,4} 									\quad
\Big(\mathrm{in}~\mathrm{Hom}_{\mathbf{PaB}(4)}\big(((12)3)4,1(2(34))\big)\Big)\,. &	\tag{P}\label{eqn:P}
\end{flalign}
\end{theorem}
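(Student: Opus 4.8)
The plan is to prove three things in turn: that $R$ and $\Phi$ generate $\PaB$ as an operad in groupoids over the object-operad $\Pa$; that the four relations \eqref{eqn:U}, \eqref{eqn:H1}, \eqref{eqn:H2}, \eqref{eqn:P} hold; and that they form a \emph{complete} set of relations. The geometric input is that $\overline{\mathrm C}(\C,-)$ is an operad of $K(\pi,1)$ spaces: each $\overline{\mathrm C}(\C,n)$ deformation retracts onto $\mathrm C(\C,n)$, whose fundamental group is the pure braid group $\PB_n$, so that $\PaB(n)=\pi_1\big(\overline{\mathrm C}(\C,n),\Pa(n)\big)$ depends only on the homotopy $1$-type, with operadic composition modelled by the gluing of boundary strata. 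Up to equivalence, $\PaB$ is the operad in groupoids whose algebras in categories are the braided monoidal categories, so the statement is a packaging of Mac Lane's and Joyal--Street's coherence theorems.

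\textbf{Generation and the relations.} Every object of $\Pa(n)$ is built from $\Pa(1)$ and the two objects of $\Pa(2)$ by operadic composition, so it is enough to generate morphisms. Fixing for each $n$ a spanning tree of the groupoid $\PaB(n)$ whose edges are operadic composites of $R^{\pm1}$ (re-orderings) and $\Phi^{\pm1}$ (re-bracketings) --- available since any two parenthesized permutations are joined by transposition and associativity moves --- one writes an arbitrary arrow of $\PaB(n)$ as a tree-path, followed by a loop based at a fixed object, that is an element of $\PB_n$, followed by a tree-path. As $\PB_n$ is generated by the elementary pure braids $x_{ij}$, and each $x_{ij}$ is, up to conjugation by tree-paths, the operadic insertion of the full twist $R^{2,1}R^{1,2}\in\mathrm{Hom}_{\PaB(2)}(12,12)$ into the relevant pair of slots, generation by $R$ and $\Phi$ follows. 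For the relations: \eqref{eqn:U} is the compatibility of $\Phi$ with the arity-lowering maps attached to the pointing $Unit\to\Pa$, and is immediate from the definitions; \eqref{eqn:H1} and \eqref{eqn:H2} are the two hexagons, each expressing that moving one point around two others directly around the merged pair and successively around each (in the over- or in the under-strand fashion) are homotopic loops in $\mathrm C(\C,3)$; and \eqref{eqn:P} is the pentagon, the boundary loop of the $2$-dimensional Stasheff associahedron, a contractible face already contained in $\overline{\mathrm C}(\RR,4)\subset\overline{\mathrm C}(\C,4)$.

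\textbf{Completeness.} Let $\widetilde{\PaB}$ be the operad in groupoids with object-operad $\Pa$ freely generated by $R$ and $\Phi$ subject only to \eqref{eqn:U}, \eqref{eqn:H1}, \eqref{eqn:H2}, \eqref{eqn:P}. By the previous step there is a canonical morphism $F\colon\widetilde{\PaB}\to\PaB$ that is the identity on objects and full, and the task is to show $F$ is faithful. I would induct on the arity using the strand-deleting morphism $\PaB(n)\to\PaB(n-1)$, induced by the Fadell--Neuwirth fibration $\mathrm C(\C,n)\to\mathrm C(\C,n-1)$ whose fiber is a punctured plane; this presents $\PaB(n)$ as an extension of $\PaB(n-1)$ by a free groupoid, and one checks that $\widetilde{\PaB}(n)\to\widetilde{\PaB}(n-1)$ realizes the same extension. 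Here the pentagon \eqref{eqn:P} gives coherence for iterated re-bracketings, so that all parenthesizations may be treated as strictly associative, while the hexagons \eqref{eqn:H1}, \eqref{eqn:H2} allow one to commute the braiding past arbitrary bracketings; together they reduce every relation among the generators in arity $n$ to consequences of those already visible in arities $\le 4$, i.e.\ to the defining relations of $\PB_n$. Comparing the resulting iterated semidirect-product decompositions of $\PaB$ and $\widetilde{\PaB}$ slot by slot yields faithfulness, hence $\widetilde{\PaB}\cong\PaB$. Equivalently, the whole completeness step is the coherence theorem for braided monoidal categories of Joyal--Street, applied to the universal example.

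\textbf{The main difficulty.} The one genuine obstacle is this last step: controlling the kernel of $\widetilde{\PaB}(n)\to\widetilde{\PaB}(n-1)$ and checking that the imposed low-arity relations do not collapse it, so that the free-groupoid fibers survive. This needs a careful analysis of the action of $\widetilde{\PaB}(n-1)$ on those fibers and of the interaction of the hexagons and pentagon with the strand-deletion maps; it is precisely the technical core of Fresse's argument, and a self-contained proof would have to reproduce that analysis.
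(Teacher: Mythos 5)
The paper does not prove this theorem at all: it is stated as a known result, attributed to Drinfeld and Bar-Natan and cited in operadic form from Fresse \cite[Theorem 6.2.4]{Fresse}. Your outline is a faithful sketch of exactly that cited proof --- generation via tree-paths and full twists, the relations from contractibility of the Stasheff faces and the hexagon homotopies, completeness by induction along the strand-deletion maps coming from the Fadell--Neuwirth fibrations --- and you correctly isolate the only genuinely hard point (that the low-arity relations do not collapse the free fibers), which is precisely the technical core of Fresse's argument that neither you nor the paper reproduces.
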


In order to fix our braid group convention we recall the following. The automorphism group $\on{Aut}_{\PaB(n)}(\mathbf{p})$ of any parenthesized permutation $\mathbf{p}$ of lenght $n$ is the pure braid group $\on{PB}_n$ on $n$ strands, which is generated by elementary pure braids $x_{ij}$, $1\leq i<j\leq n$, 
which satisfy a certain list of relations (see \cite{CaGo2} for more details).
In this article we will depict the generator $x_{ij}$ in the following two equivalent ways: 
\begin{center}
\begin{tikzpicture}[baseline=(current bounding box.center)]
\tikzstyle point=[circle, fill=black, inner sep=0.05cm]
	\node[point, label=above:$1$] at (0,1) {};
	\node[point, label=below:$1$] at (0,-1.5) {};
	\draw[->,thick, postaction={decorate}] (0,1) -- (0,-1.45);
	\node[point, label=above:$i$] at (1,1) {};
	\node[point, label=below:$i$] at (1,-1.5) {};
	\node[point, label=above:$...$] at (2,1) {};
	\node[point, label=below:$...$] at (2,-1.5) {};
	\draw[->,thick, postaction={decorate}] (2,1) -- (2,-1.45);
	\node[point, ,white] at (2,0.4) {};
	\node[point, ,white] at (2,-0.9) {};
	\node[point, label=above:$j$] at (3,1) {};
	\node[point, label=below:$j$] at (3,-1.5) {};
	\draw[thick] (1,1) .. controls (1,0.25) and (3.5,0.5).. (3.5,-0.25);
	\node[point, ,white] at (3,0.2) {};
	\draw[->,thick, postaction={decorate}] (3,1) -- (3,-1.45);
	\draw[->,thick, postaction={decorate}] (3,0) -- (3,-1.45);
	\node[point, ,white] at (3,-0.7) {};
	\draw[->,thick, postaction={decorate}] (3.5,-0.25) .. controls (3.5,-1) and (1,-0.75).. (1,-1.45);
	\node[point, label=above:$n$] at (4,1) {};
	\node[point, label=below:$n$] at (4,-1.5) {};
	\draw[->,thick, postaction={decorate}] (4,1) -- (4,-1.45);
\end{tikzpicture}
$\qquad\longleftrightarrow\quad\sphericalangle\quad$
\begin{tikzpicture}[baseline=(current bounding box.center)] 
\tikzstyle point=[circle, fill=black, inner sep=0.05cm]
	\draw[loosely dotted] (-1.5,1.5) -- (1.5,-1.5); 
	\node[point, label=below:1] at (-1.5,1.5) {}; 
	\node[point, label=above:$i$] at (-0.5,0.5) {};
	\node[point, label=above:$j$] at (0.5,-0.5) {};
	\node[point, label=above:$n$] at (1.5,-1.5) {};
	\draw[thick, postaction={decorate}] (-0.5,0.5) .. controls (0,-0.5) and (-0.25,-0.2).. (0.6,-0.4); 
	\draw[->,thick] (0.6,-0.4) .. controls (0.7,-0.5)  .. (0.6,-0.6) ;
	\draw[thick, postaction={decorate}] (0.6,-0.6) .. controls (0,-0.75) .. (-0.5,0.5);
\end{tikzpicture}
\end{center}
The group $\on{PB}_n$ is the kernel of the map $\on{B}_n \to \mathfrak{S}_n$ sending, for all $1\leq i \leq n-1$, the generators $\sigma_i$ of $\on{B}_n$ to the permutation $(i,i+1)$. The elements $\sigma_i$ are depicted in the same way as the R's:
\begin{center}
\begin{tikzpicture}[baseline=(current bounding box.center)]
\tikzstyle point=[circle, fill=black, inner sep=0.05cm]
 \node[point, label=above:$i$] at (1,1) {};
 \node[point, label=below:$i+1$] at (1,-0.25) {};
 \node[point, label=above:$i+1$] at (2,1) {};
 \node[point, label=below:$i$] at (2,-0.25) {};
  \draw[->,thick] (1,1) .. controls (1,0.25) and (2,0.5).. (2,-0.20);
 \node[point, ,white] at (1.5,0.4) {};
 \draw[->,thick] (2,1) .. controls (2,0.25) and (1,0.5).. (1,-0.20); 
\end{tikzpicture}
\qquad\qquad\qquad
\begin{tikzpicture}[baseline=(current bounding box.center)]
\tikzstyle point=[circle, fill=black, inner sep=0.05cm]
 \node[point, label=right:$i+1$] at (0.5,-0.5) {};
 \node[point, label=left:$i$] at (-0.5,0.5) {};
 \draw[->,thick] (0.5,-0.5) .. controls (-0.25,-0.25) .. (-0.45,0.45); 
 \draw[->,thick] (-0.5,0.5) .. controls (0.25,0.25) .. (0.45,-0.45); 
\end{tikzpicture}
\end{center}


\subsection{The operad of (parenthesized) chord diagrams}\label{sec-pacd}

Recall \cite{Tam,Fresse} that the collection of Kohno--Drinfeld Lie $\kk$-algebras $\t_n(\KK)$ 
is provided with the structure of an operad in the category $grLie_\KK$ of positively graded Lie algebras over $\kk$, with symmetric monoidal structure given by the direct sum $\oplus$. 
This is equivalent to Bar-Natan's cabling operation \cite{BN} on chord diagrams. 

\medskip

Taking the degree completion of the universal enveloping algebra functor, we get an operad 
$\CD(\KK) := \hat{\mathcal{U}}(\t({\kk}))$ in complete filtered cocommutative Hopf algebras 
which we view as categories (with only one object) enriched in complete filtered cocommutative coalgebras, 
that we call the \textit{operad of chord diagrams}. 

\medskip

By definition, the operad $\on{Ob}(\CD(\kk))$ of object of $\CD(\kk)$ is the terminal operad in sets. 
One can thus define the operad $\text{\gls{PaCD}}$, that is called the \textit{operad of parenthesized chord diagrams}, as 
the \textit{fake pull-back} of $\CD(\KK)$ along the terminal morphism $\Pa\to *=\on{Ob}(\CD(\kk))$. We refer to 
\cite{CaGo2} for the definition of fake pull-back; it is enough to know that $\PaCD(\kk)$ has $\Pa$ as operad 
of objects, and that in arity $n$ the complete filtered cocommutative coalgebra of morphisms between any pair 
of objects is always $\hat{\mathcal{U}}(\t_n({\kk}))$. 


\subsection{Drinfeld associators}\label{sec:2.8assoc}

Recall that
\begin{itemize}
\item there is a symmetric monoidal functor $G:\bf{Cat(CoAlg_{\kk})}\to \mathbf{Grpd}_\kk$ from 
categories enriched in complete filtered cocommutative $\kk$-coalgebras to $\kk$-prounipotent groupoids. 
\item for $\mathcal C$ being $\mathbf{Grpd}$, $\mathbf{Grpd}_\kk$, or $\bf{Cat(CoAlg_{\kk})}$ (see \cite{CaGo2}), the notation 
$$
\on{Aut}_{\on{Op}\mathcal C}^+\quad(\mathrm{resp.}~\on{Iso}_{\tmop{Op}\mathcal C}^+)
$$ 
refers to those automorphisms (resp.~isomorphisms) which are the identity on objects within 
the category $\on{Op}\mathcal C$ of operads in $\mathcal C$. 
\end{itemize}

\medskip

A \textit{Drinfeld $\KK$-associator} is an isomorphism between the operads $\widehat{\PaB}(\KK)$ and 
$G \PaCD(\KK)$ in $\mathbf{Grpd}_{\kk}$, which is the identity on objects. We denote by
$$
\text{\gls{Ass}}:=\on{Iso}^+_{\on{Op}\mathbf{{Grpd}_{\kk}}}(\widehat{\PaB}(\KK),G \PaCD(\KK))
$$
the set of $\KK$-associators.
Drinfeld showed in \cite{DrGal} (see also \cite{BN,Fresse}) that there is a one-to-one correspondence between the 
set of Drinfeld $\kk$-associators and the set $\text{\gls{bAss}}$ of pairs 
$(\mu,\varphi)\in\kk^\times \times \on{exp}(\hat\f_2(\kk))$, 
such that 
\begin{itemize}
\item $\varphi^{3,2,1}=(\varphi^{1,2,3})^{-1} \quad $ in $\on{exp}(\hat\t_{3}(\kk))$,
\item $\varphi^{1,2,3}e^{\mu t_{23}/2}\varphi^{2,3,1}e^{\mu t_{31}/2}\varphi^{3,1,2}e^{\mu t_{12}/2}
=e^{\mu(t_{12}+t_{13}+t_{23})/2} \quad $ in $\on{exp}(\hat\t_{3}(\kk))$,
\item $\varphi^{1,2,3}\varphi^{1,23,4}\varphi^{2,3,4}=
\varphi^{12,3,4}\varphi^{1,2,34} \quad$ in $\on{exp}(\hat\t_{4}(\kk))$,
\end{itemize}
where $\varphi^{1,2,3}=\varphi(t_{12},t_{23})$ is viewed as an element of $\on{exp}(\hat\t_{3}(\kk))$ via 
the inclusion $\hat\f_2(\kk)\subset \hat\t_3(\kk)$ sending $x$ to $t_{12}$ and $y$ to $t_{23}$. 
The proof of this result relies on the universal property of $\PaB$ from Theorem \ref{thm2.3}. 
In particular, a morphism $F:\widehat{\PaB}(\KK) \longrightarrow G \PaCD(\KK)$ is uniquely determined 
by the pair $(\mu,\varphi)\in\kk\times\on{exp}(\hat\f_2(\kk))$ given by $F(R^{1,2})=e^{\mu t_{12}/2} X^{1,2}$ and
$F(\Phi^{1,2,3})=\varphi(t_{12},t_{23}) a^{1,2,3}$. 

An example of such an associator is the KZ associator $\Phi_{\on{KZ}}$.
It is defined as the the renormalized holonomy
from $0$ to $1$ of $G'(z) = (\frac{t_{12}}{z} + \frac{t_{23}}{z-1})G(z)$, 
i.e., $\text{\gls{KZAss}} := G_{0^{+}}G_{1^{-}}^{-1}\in\on{exp}(\hat\t_{3}(\C))$, where 
$G_{0^{+}},G_{1^{-}}$ are the solutions such that $G_{0^{+}}(z)\sim z^{t_{12}}$ 
when $z\to 0^{+}$ and $G_{1^{-}}(z)\sim (1-z)^{t_{23}}$ when $z\to 1^{-}$. We have that 
$(2\pi\i,\Phi_{\on{KZ}})$ is an element of $\on{Ass}(\C)$.


\subsection{The Grothendieck--Teichm\"uller group}\label{subsec-GT}

The \textit{Grothendieck--Teichm\"uller group} is defined as the group 
$$
\text{\gls{GT}}:= \on{Aut}_{\on{Op} \mathbf{Grpd}}^{+}(\PaB)
$$
of automorphisms of the operad in groupoids $\PaB$ which are the identity of objects and 
its $\kk$-pro-unipotent version is
$$
\widehat{\GT}(\kk):= \on{Aut}_{\on{Op}\mathbf{Grpd}_\kk}^{+}\big(\widehat{\PaB}(\kk)\big).
$$
In this article we will focus on the $\kk$-pro-unipotent version of this group in the cyclotomic situation.
The group $\widehat{\GT}(\KK)$ is isomorphic to Drinfeld's Grothendieck--Teichm\"uller group $\text{\gls{bGT}}$ 
consisting of pairs 
$$
(\lambda,f) \in \KK^{\times} \times\widehat{\on{F}}_2(\KK)
$$
which satisfy the following equations:
\begin{itemize}
\item $f(x,y)=f(y,x)^{-1}$ in $\widehat{\on{F}}_2(\KK)$, 
\item $x_1^{\nu}f(x_1,x_2)x_2^{\nu}f(x_2,x_3)x_3^{\nu}f(x_3,x_1)=1$ in $\widehat{\on{F}}_2(\KK)$, 
\item $f(x_{13}x_{23}, x_{34})f(x_{12}, x_{23}x_{24}) =f(x_{12}, x_{23}) 
f(x_{12}x_{13}, x_{24}x_{34})f(x_{23}, x_{34})$ in $\widehat{\on{PB}}_4(\KK)$,
\end{itemize}
where $x_1,x_2,x_3$ are 3 variables subject only to $x_1x_2x_3=1$, $\nu=\frac{\lambda-1}{2}$,
 and $x_{ij}$ is the elementary pure braid from Subsection \ref{sec-pab}.
The multiplication law is given by
\begin{equation*}\label{GT:LCI}
(\lambda_1, f_1)(\lambda_2, f_2)=
(\lambda_1\lambda_2, 
 f_1(x^{\lambda_2},f_2(x, y)y^{\lambda_2} f_2(x, y)^{-1}) f_2(x, y)) .
\end{equation*}

One obtains the pair $(\lambda,f)$ from an automorphism $F\in \widehat{\GT}(\KK)$, 
for $\lambda=2\nu+1$ by the following assignement: $F(R^{1,2})=(R^{1,2}R^{2,1})^\nu R^{1,2}$ 
and $F(\Phi^{1,2,3})=  f(x_{12},x_{23}) \Phi^{1,2,3}$.


\subsection{The graded Grothendieck--Teichm\"uller group}\label{subsec-GRT}

The graded Grothendieck--Teichm\"uller group is the group 
$$
\text{\gls{GRT}}:=\on{Aut}_{\on{Op}\mathbf{Grpd}_{\kk}}^+(G\PaCD(\KK))
$$ 
of automorphisms of $G\PaCD(\KK)$ that are the identity on objects.

Again, the operadic definition of $\GRT(\kk)$ coincides with the one originaly given by Drinfeld. 
Denote by $\on{GRT}_1$ the set of elements in 
$g\in\on{exp}(\hat\f_2(\kk))\subset \on{exp}(\hat\t_3(\kk))$ such that 
\begin{itemize}
\item $g^{3,2,1}=g^{-1}$ and $g^{1,2,3}g^{2,3,1}g^{3,1,2}=1$, in $\on{exp}(\hat\t_{3}(\kk))$,
\item $g^{1,2,3}g^{1,23,4}g^{2,3,4}=g^{12,3,4}g^{1,2,34}$, in $\on{exp}(\hat\t_{4}(\kk))$,
\end{itemize}
One has the following multiplication law on $\on{GRT}_1$: 
$$
(g_1*g_2)(t_{12},t_{23}):= g_1(t_{12},\on{Ad}(g_2(t_{12},t_{23}))(t_{23}))g_2(t_{12},t_{23})\,.
$$
Drinfeld showed in \cite{DrGal} that the above $\on{GRT}_1$ is stable under $*$, that it defines 
a group structure on it, and that rescaling transformations $g(x,y)\mapsto \lambda\cdot g(x,y)=
g(\lambda x,\lambda y)$ define an action of $\kk^\times$ of $\on{GRT}_1$ by automorphisms. 
We denote by $\on{GRT}(\kk)$ the corresponding semi-direct product. 

Then, as was shown in \cite{Fresse}, the group $\GRT(\kk)$ is isomorphic to $\on{GRT}(\kk)$. 
One obtains the pair $(\lambda,g)$ from an automorphism $G\in\GRT(\kk)$ in the following way: 
$G(X^{1,2})=X^{1,2}$, $G(H^{1,2})=e^{\lambda t_{12}} H^{1,2}$, and $G(a^{1,2,3})= g(t_{12},t_{23}) a^{1,2,3}$.


\subsection{Bitorsor structure}

Recall first that there is a free and transitive left action of $\widehat{\on{GT}}(\kk)$ on $\on{Ass}(\kk)$, 
defined, for $(\lambda,f)\in\widehat{\on{GT}}(\kk)$ and $(\mu,\varphi)\in \on{Ass}(\kk)$, by
$$
((\lambda,f)*(\mu,\varphi))(t_{12},t_{23}):= (\lambda\mu,f(e^{\mu t_{12}},\on{Ad}(\varphi(t_{12},t_{23}))
(e^{\mu t_{23}}))\varphi(t_{12},t_{23}))\,,
$$
where $\on{Ad}(f)(g):=fgf^{-1}$, for any symbols $f,g$.

Recall also that there is a free and transitive right action of $\on{GRT}(\kk)$ on $\on{Ass}(\kk)$ defined 
as follows: for $(\lambda,g)\in \on{GRT}(\kk)$ and $(\mu,\varphi)
\in \on{Ass}(\kk)$, given by
$$
((\mu,\varphi)*(\lambda,g))(t_{12},t_{23}):= (\lambda\mu,\varphi(\lambda t_{12},\on{Ad}(g)(\lambda t_{23}))
g(t_{12},t_{23}) )\,.
$$

These two action commute with each other, and turn 
$\big(\widehat{\on{GT}}(\kk),\on{Ass}(\kk),\on{GRT}(\kk)\big)$ into a bitorsor. 
By its very definition, the triple $\big(\widehat{\GT}(\kk),\Assoc(\kk),\GRT(\kk)\big)$ is also a bitorsor, 
and it is proven 
in \cite{Fresse} that the above identifications from subsections \ref{sec:2.8assoc}, \ref{subsec-GT}, 
and \ref{subsec-GRT} provides a bitorsor isomorphism
\begin{equation}\label{bitorsor:cl}
\big(\widehat{\GT}(\kk),\Assoc(\kk),\GRT(\kk)\big) \longrightarrow \big(\widehat{\on{GT}}(\kk),
\on{Ass}(\kk),\on{GRT}(\kk)\big)\,.
\end{equation}

\section{Parenthesized braids with a frozen strand}
\label{Frozen}


\subsection{Compactified configuration space of the annulus}

For each finite set $I$, consider the (reduced) configuration space of
$\mathbb{C}^{\times}$:
\[
\text{\gls{CCI1}} \assign
\left\{ \mathbf{z} = (z_i)_{i \in I}\in (\mathbb{\C^{\times}})^I |z_i \neq z_j, \forall i \neq j \right\}
/ \mathbb{R}_{> 0}\,. 
\]
We clearly have an isomorphism between $\mathrm{C} (\mathbb{C^{\times}},n)$ and 
$\mathrm{C}  (\mathbb{C}, n + 1)$. We then consider the Axelrod--Singer--Fulton--MacPherson 
compactification $\overline{\mathrm{C}}(\mathbb{C^{\times}}, n)$ of $\mathrm{C} (\mathbb{C^{\times}}, n)$. 
The boundary 
\[
\partial \overline{\mathrm{C}} (\mathbb{\mathbb{C^{\times}}}, n)
= \overline{\mathrm{C}} (\mathbb{C^{\times}}, n) - \mathrm{C}(\mathbb{\mathbb{C^{\times}}}, n)
\]
is made of the following irreducible components: for any partition 
$[\![ 0, n ]\!] = J_0 \coprod \cdots\coprod J_k$ such that $0 \in J_m$ for some $0 \leq m \leq k$, 
there is a component
\[
 \partial_{J_0, \cdots, J_k}  \overline{\mathrm{C}} (\mathbb{C^{\times}},
   n) \cong  \overline{\mathrm{C}} (\mathbb{C^{\times}}, k) \times  \overline{\mathrm{C}} 
   (\mathbb{C^{\times}}, J_m) \times  \prod_{i = 0 ; i \neq m}^k \overline{\mathrm{C}} (\mathbb{C}, J_i)\,. 
\]

\begin{remark}\label{Lien avec Marcy and co}
One should be aware that the identification of the boundary with a union of such components depends on a choice of framing (up to isotopy). Framings on $\mathbb{C}^\times$ or classified (up to isotopy) by their rotation number. This is similar to what happens in \cite[Remark 3.6]{BZBJ}, where there is a choice of rotation number to make in the definition of a braided module category. There is no suprise here: braided module categories are algebras in categories over the moperad of parenthezised braids with a frozen strand. 

In this paper, we choose the framing with rotation number $1$, also know as the cylinder framing. The main reason for this choice is that it is the only one compatible with the action by multiplication of $S^1$ on $\mathbb{C}^\times$; we will use this action of $\mu_N\subset S^1$ in the next section to define the moperad of cyclotomic parenthezised braids. 

Note that the recent work \cite{DHLARS} uses the framing with rotation number $0$, also known as the blackboard framing. This leads to a \textbf{different} moperad of 
parenthezised braids with a frozen strand. 
\end{remark}


\subsection{The $\mathbf{PaB}$-moperad of parenthesized braids with a frozen strand} 

We have inclusions of topological moperads
$$
\mathbf{Pa}_{0}\,\subset\,\overline{\textrm{C}}(\mathbb{R}_{>0},-)\,\subset\,
\overline{\textrm{C}}(\mathbb{C}^{\times},-)\,
$$
over 
$$
\mathbf{Pa}\,\subset\,\overline{\textrm{C}}(\mathbb{R},-)\,\subset\,\overline{\textrm{C}}
(\mathbb{C},-)\,.
$$
We then define 
$$
\text{\gls{PaB1}}:=\pi_1\left(\overline{\textrm{C}}(\mathbb{C^{\times}},-),\mathbf{Pa}_{0}\right)\,,
$$
which is a moperad over the operad in groupoids $\mathbf{PaB}$.
\begin{example}[Description of $\PaB^1(1)$]\label{ex3.1}
First, observe that $\overline{\textrm{C}}(\mathbb{C}^{\times},1)\simeq \overline{\textrm{C}}
(\mathbb{C},2)\simeq S^1$. 
Moreover, $\mathbf{Pa}_{0}=\{(01)\}$. Hence $\PaB^1(1)\simeq\mathbb{Z}$: 
it has only one object $(01)$ and is freely generated by an automorphism $E^{0,1}$ of $(01)$, 
which can be depicted as an elementary pure braid: 
\begin{center}
\begin{tikzpicture}[baseline=(current bounding box.center)]
\tikzstyle point=[circle, fill=black, inner sep=0.05cm]
\tikzstyle point2=[circle, fill=black, inner sep=0.08cm]
\node[point, label=above:$0$] at (1,1) {};
\node[point, label=above:$1$] at (1.5,1) {};
\node[point, label=below:$0$] at (1,0) {};
\node[point, label=below:$1$] at (1.5,0) {};
\draw[->,thick] (0.75,0.5) .. controls (0.75,0.35) and (1.5,0.35) .. (1.5,0.05);
\node[point, ,white] at (1,0.37) {};
\draw[->,thick] (1,1) .. controls (1,0.5) .. (1,0.05);
\node[point, ,white] at (1,0.7) {};
\draw[-,thick] (1.5,1) .. controls (1.5,0.75) and (0.75,0.75) .. (0.75,0.5); 
\end{tikzpicture}
\qquad\qquad\qquad
\begin{tikzpicture}[baseline=(current bounding box.center)]
\tikzstyle point=[circle, fill=black, inner sep=0.05cm]
 \node[point, label=left:$0$] at (0,0) {};
 \node[point, label=right:$1$] at (0.5,0) {};
 \draw[->,thick] (0.5,0) .. controls (-1,-1) and (-1,1) .. (0.5,0); 
\end{tikzpicture}
\\ \text{$E^{0,1}$ from two different angles}
\end{center}
\end{example}
\begin{example}[Notable arrow in $\PaB^1(2)$]\label{ex3.2}
Let us first recall that $\mathbf{Pa_0}(2)=\mathfrak{S}_2\times\{(\bullet\bullet)
\bullet,\bullet(\bullet\bullet)\}$ and that 
$\overline{\textrm{C}}(\mathbb{R}_{>0},2)\cong\mathfrak{S}_2\times[0,1]$. 
Hence we have an arrow $\Psi^{0,1,2}$ (the identity path in $[0,1]$) from 
$(01)2$ to $0(12)$ in $\PaB^1(2)$, which can be depicted as follows: 
\begin{center}
\begin{tikzpicture}[baseline=(current bounding box.center)]
\tikzstyle point=[circle, fill=black, inner sep=0.05cm]
 \node[point, label=above:$(0$] at (1,1) {};
 \node[point, label=below:$0$] at (1,-0.25) {};
 \node[point, label=above:$1)$] at (1.5,1) {};
 \node[point, label=below:$(1$] at (3.5,-0.25) {};
 \node[point, label=above:$2$] at (4,1) {};
 \node[point, label=below:$2)$] at (4,-0.25) {};
 \draw[->,thick] (1,1) .. controls (1,0) and (1,0).. (1,-0.20); 
 \draw[->,thick] (1.5,1) .. controls (1.5,0.25) and (3.5,0.5).. (3.5,-0.20);
 \draw[->,thick] (4,1) .. controls (4,0) and (4,0).. (4,-0.20);
\end{tikzpicture}
\qquad\qquad\qquad
\begin{tikzpicture}[baseline=(current bounding box.center)] 
\tikzstyle point=[circle, fill=black, inner sep=0.05cm]
 \node[point, label=below:$0$] at (0.5,0.5) {};
 \node[point, label=below:$1$] at (1,0.5) {};
 \node[point, label=below:$2$] at (2.5,0.5) {};
 \draw[->,thick] (1,0.5) .. controls (1,0.5) .. (2,0.5); 
\end{tikzpicture}
\\ \text{$\Psi^{0,1,2}$ from two different angles}
\end{center}
\end{example}

\begin{remark}\label{rem4.3-poiting}
Recall from \S\ref{sec-pointings} that, being a $\PaB$-moperad, $\PaB^1$ comes 
equipped with a morphism of $\mathfrak{S}$-modules $\PaB\to\PaB^1$. 
In pictorial terms, this morphism sends a parentesized braid with $n$ strands to a 
parenthesized braid with $n+1$ strands by adding a frozen strand labelled by $0$ on the left. 
For instance, the images of $R^{1,2}$ (a morphism in $\PaB(2)$) and of 
$\Phi^{1,2,3}$ (a morphism in $\PaB(3)$) can be respectively depicted as follows: 
\begin{center}
\begin{tikzpicture}[baseline=(current bounding box.center)]
\tikzstyle point=[circle, fill=black, inner sep=0.05cm]
 \node[point, label=above:$0$] at (0,1) {};
 \node[point, label=below:$0$] at (0,-0.25) {};
 \node[point, label=above:$(1$] at (1,1) {};
 \node[point, label=below:$(2$] at (1,-0.25) {};
 \node[point, label=above:$2)$] at (2,1) {};
 \node[point, label=below:$1)$] at (2,-0.25) {};
 \draw[->,thick] (0,1) to (0,-0.20) ;
  \draw[->,thick] (1,1) .. controls (1,0.25) and (2,0.5).. (2,-0.20);
 \node[point, ,white] at (1.5,0.4) {};
 \draw[->,thick] (2,1) .. controls (2,0.25) and (1,0.5).. (1,-0.20); 
\end{tikzpicture}
\qquad\qquad\qquad
\begin{tikzpicture}[baseline=(current bounding box.center)]
\tikzstyle point=[circle, fill=black, inner sep=0.05cm]
 \node[point, label=above:$0$] at (0,1) {};
 \node[point, label=below:$0$] at (0,-0.25) {};
 \node[point, label=above:$((1$] at (1,1) {};
 \node[point, label=below:$(1$] at (1,-0.25) {};
 \node[point, label=above:$2)$] at (1.5,1) {};
 \node[point, label=below:$(2$] at (3.5,-0.25) {};
 \node[point, label=above:$3)$] at (4,1) {};
 \node[point, label=below:$3))$] at (4,-0.25) {};
 \draw[->,thick] (0,1) to (0,-0.20) ;
 \draw[->,thick] (1,1) .. controls (1,0) and (1,0).. (1,-0.20); 
 \draw[->,thick] (1.5,1) .. controls (1.5,0.25) and (3.5,0.5).. (3.5,-0.20);
 \draw[->,thick] (4,1) .. controls (4,0) and (4,0).. (4,-0.20);
\end{tikzpicture}
\end{center}
We will still denote these images by $R^{1,2}$ and $\Phi^{1,2,3}$.
\end{remark}


\subsection{The universal property of $\PaB^1$}

Our main goal in this section is to prove the following generators and relations presentation of $\PaB^1$. 
\begin{theorem}\label{PaB1}
As a $\PaB$-moperad having $\mathbf{Pa_0}$ as $\mathbf{Pa}$-moperad of objects, $\PaB^1$ is 
generated by $E:=E^{0,1}\in \PaB^1(1)$ and $\Psi:=\Psi^{0,1,2} \in \PaB^1(2)$ together with the 
following relations: 
\begin{flalign}
 & \Psi^{0,\emptyset,1}=\on{Id}_{01}
 \quad\Big(\mathrm{in}~\mathrm{Hom}_{\PaB^1(1)}\big(01,01\big)\Big)\,,  \tag{cU}\label{eqn:cU} \\
 & \Psi^{01,2,3} \Psi^{0,1,23} = \Psi^{0,1,2} \Psi^{0,12,3} \Phi^{1,2,3}
 \quad\Big(\mathrm{in}~\mathrm{Hom}_{\PaB^1(3)}\big(((01)2)3,0(1(23))\big)\Big)\,, \tag{M}\label{eqn:MP} \\
 & \Psi^{0,1,2}E^{0,12}( \Psi^{0,1,2})^{-1}=E^{0,1} E^{01,2}
 \quad\Big(\mathrm{in}~\mathrm{Hom}_{\PaB^1(2)}\big((01)2,(01)2)\big)\Big)\,,  \tag{R}\label{eqn:RP} \\ 
 & E^{01,2} = \Psi^{0,1,2} R^{1,2} (\Psi^{0,2,1})^{-1} E^{0,2} \Psi^{0,2,1} R^{2,1} (\Psi^{0,1,2})^{-1}
 \quad\Big(\mathrm{in}~\mathrm{Hom}_{\PaB^1(2)}\big((01)2,(01)2\big)\Big)\,. \tag{O}\label{eqn:O} 
\end{flalign}
\end{theorem}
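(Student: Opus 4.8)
The plan is to follow the same three-step pattern as in the proof of Theorem~\ref{thm2.3}: (i) show that $E$ and $\Psi$ generate $\PaB^1$ as a $\PaB$-moperad with $\mathbf{Pa_0}$ as moperad of objects; (ii) check that the four relations hold in $\PaB^1$; (iii) show that these relations are complete. For step~(i) I would first record that $\overline{\mathrm C}(\mathbb C^\times,n)\simeq\overline{\mathrm C}(\mathbb C,n+1)$, together with the fibration $\mathrm{Conf}(\mathbb C,n+1)\to\mathbb C$ forgetting all but the $0$-th point (contractible base, fibre $\mathrm{Conf}(\mathbb C^\times,n)$), yields $\Aut_{\PaB^1(n)}(\mathbf p)\cong\PB_{n+1}$ for every object $\mathbf p$. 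Under this identification, the subgroup generated by the $x_{ij}$ with $1\le i<j\le n$ is exactly the image of $\PB_n=\Aut_{\PaB(n)}(\bar{\mathbf p})$ along the pointing morphism $\PaB\to\PaB^1$, while the remaining classical generators $x_{0i}$, $1\le i\le n$, are obtained from $E=E^{0,1}$ by conjugating with composites of $\Psi$'s and $\Phi$'s (moving the $i$-th strand next to the frozen one). Combined with the fact that every object of $\mathbf{Pa_0}$, and every reparenthesization arrow between such objects, is reached from $(01)$ by the operadic/moperadic operations applied to $\Psi$ and to the generators of $\PaB$, this shows that the canonical morphism $\widetilde{\PaB^1}\to\PaB^1$ out of the $\PaB$-moperad freely presented by $E,\Psi$ and \eqref{eqn:cU}--\eqref{eqn:O} is surjective.

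Step~(ii) is routine. Relation~\eqref{eqn:cU} is the degeneracy/unitality of $\Psi$: emptying one of the two non-frozen blocks turns the identity path of Example~\ref{ex3.2} into an identity path. Relation~\eqref{eqn:MP} is the ``mixed pentagon'': an identity between reparenthesization arrows for the four points $0,1,2,3$ in the annulus, proved by exhibiting a disc in $\overline{\mathrm C}(\mathbb C^\times,3)$ bounded by the corresponding loop (equivalently, it is the pentagon for the Stasheff-type polytope $\overline{\mathrm C}(\mathbb R_{>0},3)$). Relations~\eqref{eqn:RP} and~\eqref{eqn:O} are braid identities in the annulus, checked pictorially or, after pushing them to $\PB_{n+1}$ via the identification above, deduced from the classical pure braid relations; they are the annular incarnations of ``winding around a cabled block $=$ iterated winding'' and ``a block winding $=$ a conjugated single winding'', and coincide with two of the axioms of a braided module category from \cite{Adrien}.

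Step~(iii) is the heart of the matter, and the main obstacle: one must show the surjection $\widetilde{\PaB^1}\to\PaB^1$ is injective, i.e.\ that \eqref{eqn:cU}--\eqref{eqn:O}, together with \eqref{eqn:U}--\eqref{eqn:P} (available since we work over $\PaB$), force the value of any object-fixing, $\PaB$-fixing $\PaB$-moperad morphism on every arrow of $\PaB^1$. I would first use \eqref{eqn:cU} and \eqref{eqn:MP} to pin down the values on all reparenthesization arrows of $\mathbf{Pa_0}$ — the analogue of using Mac~Lane coherence for the corresponding step in $\PaB$ — which reduces everything to the groups $\Aut_{\PaB^1(n)}(\mathbf p)\cong\PB_{n+1}$. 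There one argues by induction on $n$ via the deletion morphisms $\PaB^1(n)\to\PaB^1(n-1)$: the kernel of $\PB_{n+1}\to\PB_n$ is free on $x_{0n},x_{1n},\dots,x_{n-1,n}$ (pure braid combing for the annulus), the part generated by the $x_{in}$ with $i\ge1$ is controlled by Theorem~\ref{thm2.3}, and the interaction of the frozen-strand generator $x_{0n}$ with the rest — all that remains — is precisely what \eqref{eqn:RP} and \eqref{eqn:O} encode once transported through the moperadic operations $\circ_0$ and $\circ_i$ to arbitrary arity. As in Fresse's proof, the delicate point is to verify that no further relation is needed: that the low-arity relations \eqref{eqn:RP}, \eqref{eqn:O} (and \eqref{eqn:MP}) really do propagate to a complete presentation in every arity, i.e.\ that modulo the moperad axioms and the known relations in $\PaB$ every relation of the iterated-semidirect-product presentation of $\PB_{n+1}$ involving the frozen strand is a consequence of \eqref{eqn:RP}--\eqref{eqn:O}. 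This is a finite but careful bookkeeping, and I expect it to consume most of the work.
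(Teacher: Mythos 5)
Your overall skeleton (generation, verification of the relations, completeness) is the right one, and your steps (i) and (ii) agree with what the paper does: the relations \eqref{eqn:cU}--\eqref{eqn:O} are checked pictorially, and the identification $\Aut_{\PaB^1(n)}(\mathbf p)\cong\on{PB}_{n+1}$ coming from $\mathrm{C}(\mathbb C^\times,n)\cong\mathrm{C}(\mathbb C,n+1)$ is exactly how the problem is reduced to a statement about automorphism groups of a single object in each arity, all the groupoids involved being connected.

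The genuine gap is your step (iii). You correctly call it ``the heart of the matter'', but you only describe a strategy (induction on arity via the deletion morphisms and the combing of $\on{PB}_{n+1}$, together with the claim that \eqref{eqn:RP} and \eqref{eqn:O} account for all relations involving the frozen strand) and then defer the decisive verification as ``a finite but careful bookkeeping''. That bookkeeping \emph{is} the theorem: whether the low-arity relations propagate to a complete presentation in every arity is precisely what has to be proved, and nothing in your sketch rules out the need for an extra relation in arity $3$ or higher. The paper does not perform this computation at all. Instead it observes that \eqref{eqn:cU}, \eqref{eqn:MP}, \eqref{eqn:RP}, \eqref{eqn:O} are exactly the axioms of a braided module category, so that the moperad $\mathcal Q^1$ presented by them sits inside Enriquez's universal braided module category $\PaB^{1,Enr}$ of \cite[\S 4.4]{En} as the kernel of the map to $\mathfrak S_n$ arity-wise; Enriquez already proved that $\on{B}_n^1\to\on{Aut}_{\PaB^{1,Enr}}(p)$ is an isomorphism, and completeness then follows because the composite $\on{B}_n^1\to\pi_1\big(\mathrm{C}(\mathbb C^\times,n)/\mathfrak S_n,[p_{reg}]\big)$ is the Sossinsky--Vershinin isomorphism \cite{Sos,Ver} exhibiting the presentation of the braid group of a handlebody (here the solid torus). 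If you do not want to invoke these results, you must actually carry out your combing argument: identify the kernel of $\on{PB}_{n+1}\to\on{PB}_n$ as a free group on $x_{0n},x_{1n},\dots,x_{n-1,n}$, express each of these generators in terms of $E$, $\Psi$, $R$, $\Phi$ via the moperadic operations, and verify that the iterated-semidirect-product relations are consequences of \eqref{eqn:RP}, \eqref{eqn:O} and the relations of $\PaB$. As written, your proof does not do this, so the completeness claim is unproved.
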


\begin{remark}\label{Lien avec Marcy and co 2}
Observe that our relation \eqref{eqn:RP} is slightly different from the relation (RP) from \cite[Theorem 1.24]{DHLARS}. 
This is explained by a different choice of framing on $\mathbb{C}^\times$ (see Remark \ref{Lien avec Marcy and co}). 
\end{remark}

\begin{proof}
Let $\mathcal Q^1$ be the $\PaB$-moperad with the above presentation. 
From Examples \ref{ex3.1} and \ref{ex3.2} we deduce that, as a $\PaB$-moperad in groupoid, 
$\mathbf{PaB}^{1}$ contains two morphisms $E^{0,1}$ (in $\PaB^1(1)$) and $\Psi^{0,1,2}$ (in $\PaB^1(2)$). 
One easily shows, using the following pictures, that they satisfy the \textit{mixed pentagon} and 
\textit{octogon} relations, \eqref{eqn:MP} and \eqref{eqn:O}, and relation \eqref{eqn:RP}: 
\begin{center}
\begin{align}\tag{\ref{eqn:MP}}
\begin{tikzpicture}[baseline=(current bounding box.center)] 
\tikzstyle point=[circle, fill=black, inner sep=0.05cm]
\tikzstyle point2=[circle, fill=black, inner sep=0.08cm]
\node[point, label=above:$((0$] at (1,2) {};
\node[point, label=above:$1)$] at (1.5,2) {};
\node[point, label=above:$2)$] at (2.5,2) {};
\node[point, label=above:$3$] at (5,2) {};
\node[point, label=below:$0$] at (1,0) {};
\node[point, label=below:$(1$] at (3.5,0) {};
\node[point, label=below:$(2$] at (4.5,0) {};
\node[point, label=below:$3))$] at (5,0) {};
 \draw[->,thick] (1,2) .. controls (1,0.5) .. (1,0.05);
 \draw[-,thick] (1.5,2) .. controls (1.5,1.5) .. (1.5,1);
 \draw[-,thick] (2.5,2) .. controls (2.5,1.5) and (4.5,1.5).. (4.5,1);
 \draw[->,thick] (1.5,1) .. controls (1.5,0.5) and (3.5,0.5).. (3.5,0.05);
 \draw[->,thick] (4.5,1) .. controls (4.5,0.5) .. (4.5,0.05);
 \draw[->,thick] (5,2) .. controls (5,0.5) .. (5,0.05);
\end{tikzpicture}
=
\begin{tikzpicture}[baseline=(current bounding box.center)] 
\tikzstyle point=[circle, fill=black, inner sep=0.05cm]
\tikzstyle point2=[circle, fill=black, inner sep=0.08cm]
\node[point, label=above:$((0$] at (1,2) {};
\node[point, label=above:$1)$] at (1.5,2) {};
\node[point, label=above:$2)$] at (2.5,2) {};
\node[point, label=above:$3$] at (5,2) {};
\node[point, label=below:$0$] at (1,0) {};
\node[point, label=below:$(1$] at (3.5,0) {};
\node[point, label=below:$(2$] at (4.5,0) {};
\node[point, label=below:$3))$] at (5,0) {};
 \draw[->,thick] (1,2) .. controls (1,0.5) .. (1,0.05);
 \draw[-,thick] (2.5,2) .. controls (2.5,1.5) .. (2.5,1.3);
 \draw[-,thick] (1.5,2) .. controls (1.5,1.7) and (2,1.7).. (2,1.3);
 \draw[-,thick] (2,1.3) .. controls (2,0.9) and (3.5,0.9).. (3.5,0.65);
 \draw[-,thick] (2.5,1.3) .. controls (2.5,0.9) and (4,0.9).. (4,0.65);
 \draw[->,thick] (4,0.65) .. controls (4,0.4) and (4.5,0.4).. (4.5,0.05);
 \draw[->,thick] (3.5,0.65) .. controls (3.5,0.5) .. (3.5,0.05);
 \draw[-,thick] (5,2) .. controls (5,0.5) .. (5,0.05);
\end{tikzpicture}
\end{align}
\end{center}
\begin{center}
\begin{align}\tag{\ref{eqn:RP}}
\begin{tikzpicture}[baseline=(current bounding box.center)]
\tikzstyle point=[circle, fill=black, inner sep=0.05cm]
\tikzstyle point2=[circle, fill=black, inner sep=0.08cm]
\tikzstyle point3=[circle, fill=black, inner sep=0.5cm]
 \node[point, label=above:$(0$] at (1,2) {};
 \node[point, label=below:$(0$] at (1,-1.5) {};
 \node[point, label=above:$1)$] at (1.5,2) {};
 \node[point, label=below:$1)$] at (1.5,-1.5) {};
 \node[point, label=above:$2$] at (4,2) {};
 \node[point, label=below:$2$] at (4,-1.5) {};
\draw[->,thick] (1,2) .. controls (1,1) and (1,1).. (1,-1.45); 
\draw[-,thick] (1.5,2) .. controls (1.5,1.75) and (3.5,1.75).. (3.5,1.5);
 \draw[-,thick] (1,2) .. controls (1,0.5) .. (1,-0.95);
 \node[point2, ,white] at (1,1) {};
 \node[point2, ,white] at (1,0.8) {};
   \draw[-,thick] (0.25,0.5) .. controls (0.25,-0.25) and (4,-0.25) .. (4,-1); 
  \node[point3, ,white] at (2.4,-0.5) {}; 
  \node[point3, ,white] at (2.2,-0.75) {};
  \node[point3, ,white] at (2,-0.5) {};
  \draw[-,thick] (0.75,0.5) .. controls (0.75,-0.25) and (3.5,-0.25) .. (3.5,-1); 
     \node[point2, ,white] at (1,0) {};
 \node[point2, ,white] at (1,0.2) {};
 \draw[-,thick] (3.5,1.5) .. controls (3.5,1.25) and (0.75,1.25) .. (0.75,0.5); 
  \node[point3, ,white] at (2.2,1) {};
  \node[point3, ,white] at (2.6,1) {};
  \node[point3, ,white] at (2,1) {};
\draw[-,thick] (4,1.5) .. controls (4,1.25) and (0.25,1.25) .. (0.25,0.5); 
\draw[->,thick] (3.5,-1) .. controls (3.5,-1.25) and (1.5,-1.25).. (1.5,-1.45);
\draw[-,thick] (4,2) .. controls (4,1.5) and (4,1.5).. (4,1.5); 
\draw[->,thick] (4,-1) .. controls (4,-1) and (4,-1).. (4,-1.45); 
 \draw[-,thick] (1,0.5) .. controls (1,0.5) .. (1,-0.95);
\end{tikzpicture}
=
\begin{tikzpicture}[baseline=(current bounding box.center)]
\tikzstyle point=[circle, fill=black, inner sep=0.05cm]
\tikzstyle point2=[circle, fill=black, inner sep=0.08cm]
 \node[point, label=above:$(0$] at (1,2) {};
 \node[point, label=below:$(0$] at (1,-1) {};
 \node[point, label=above:$1)$] at (1.5,2) {};
 \node[point, label=below:$1)$] at (1.5,-1) {};
 \node[point, label=above:$2$] at (4,2) {};
 \node[point, label=below:$2$] at (4,-1) {};
  \draw[-,thick] (1,2) .. controls (1,0.5) .. (1,-0.95);
 \node[point, ,white] at (1,0.35+1.5) {};
 \draw[-,thick] (1.5,0.5+1.5) .. controls (1.5,0.35+1.5) and (0.75,0.35+1.5) .. (0.75,0.25+1.5); 
 \draw[-,thick] (0.75,0.25+1.5) .. controls (0.75,0.15+1.5) and (1.5,0.15+1.5) .. (1.5,0+1.5); 
 \node[point, ,white] at (1,0.15+1.5) {};
 \draw[->,thick] (0.75,0.5) .. controls (0.75,-0.25) and (4,-0.25) .. (4,-0.95); 
  \draw[->,thick] (1,1.5) .. controls (1,0.5) .. (1,-0.95);
 \draw[->,thick] (1.5,1.5) .. controls (1.5,0.5) .. (1.5,-0.95);
 \node[point2, ,white] at (1,0.8) {};
 \node[point2, ,white] at (1.5,1.05) {};
  \draw[-,thick] (4,2) .. controls (4,1.25) and (0.75,1.25) .. (0.75,0.5); 
 \node[point2, ,white] at (1.5,0) {};
 \node[point2, ,white] at (1,0.2) {};
   \draw[-,thick] (1,0.75) .. controls (1,0.5) .. (1,-0.95);
      \draw[-,thick] (1,1.75) .. controls (1,1.5) .. (1,1.5);
     \draw[-,thick] (1.5,0.8) .. controls (1.5,0.5) .. (1.5,-0.95); 
\end{tikzpicture} 
\end{align}
\end{center}
and
\begin{center}
\begin{align}\tag{\ref{eqn:O}}
\begin{tikzpicture}[baseline=(current bounding box.center)]
\tikzstyle point=[circle, fill=black, inner sep=0.05cm]
\tikzstyle point2=[circle, fill=black, inner sep=0.08cm]
 \node[point, label=above:$(0$] at (1,2) {};
 \node[point, label=below:$(0$] at (1,-1) {};
 \node[point, label=above:$1)$] at (1.5,2) {};
 \node[point, label=below:$1)$] at (1.5,-1) {};
 \node[point, label=above:$2$] at (4,2) {};
 \node[point, label=below:$2$] at (4,-1) {};
\draw[->,thick] (0.75,0.5) .. controls (0.75,-0.25) and (4,-0.25) .. (4,-0.95); 
 \node[point2, ,white] at (1.5,0) {};
 \node[point2, ,white] at (1,0.2) {};
 \draw[->,thick] (1,2) .. controls (1,0.5) .. (1,-0.95);
 \draw[->,thick] (1.5,2) .. controls (1.5,0.5) .. (1.5,-0.95);
 \node[point2, ,white] at (1.5,1.05) {};
 \node[point2, ,white] at (1,0.8) {};
 \draw[-,thick] (4,2) .. controls (4,1.25) and (0.75,1.25) .. (0.75,0.5); 
\end{tikzpicture} 
= 
\begin{tikzpicture}[baseline=(current bounding box.center)] 
\tikzstyle point=[circle, fill=black, inner sep=0.05cm]
\tikzstyle point2=[circle, fill=black, inner sep=0.08cm]
 \node[point, label=above:$(0$] at (1,2) {};
 \node[point, label=below:$(0$] at (1,-1.5) {};
 \node[point, label=above:$1)$] at (1.5,2) {};
 \node[point, label=below:$1)$] at (1.5,-1.5) {};
 \node[point, label=above:$2$] at (4,2) {};
 \node[point, label=below:$2$] at (4,-1.5) {};
\draw[->,thick] (1,2) .. controls (1,1) and (1,1).. (1,-1.45); 
\draw[-,thick] (1.5,2) .. controls (1.5,1.75) and (3.5,1.75).. (3.5,1.5);
\draw[-,thick] (4,2) .. controls (4,1.5) and (4,1.5).. (4,1.5);
\draw[-,thick] (3.5,1.5) .. controls (3.5,1.25) and (4,1.25).. (4,1); 
\node[point2, ,white] at (3.75,1.25) {};
\draw[-,thick] (4,1.5) .. controls (4,1.25) and (3.5,1.25).. (3.5,1); 
\draw[-,thick] (3.5,1) .. controls (3.5,0.75) and (1.5,0.75).. (1.5,0.5);
  \draw[-,thick] (4,1) .. controls (4,0.5) and (4,0.5).. (4,0.5);
 \draw[-,thick] (1,2) .. controls (1,0.5) .. (1,-0.95);
 \node[point, ,white] at (1,0.35) {};
 \draw[-,thick] (1.5,0.5) .. controls (1.5,0.35) and (0.75,0.35) .. (0.75,0.25); 
 \draw[-,thick] (0.75,0.25) .. controls (0.75,0.15) and (1.5,0.15) .. (1.5,0); 
 \node[point, ,white] at (1,0.15) {};
 \draw[-,thick] (1,0.25) .. controls (1,0.25) .. (1,-0.95);
 \draw[-,thick] (4,0.5) .. controls (4,0.25) and (4,0.25).. (4,-0.5);
 \draw[-,thick] (1.5,0) .. controls (1.5,-0.25) and (3.5,-0.25).. (3.5,-0.5);
\draw[-,thick] (3.5,-0.5) .. controls (3.5,-0.75) and (4,-0.75).. (4,-1); 
\node[point2, ,white] at (3.75,-0.75) {};
\draw[-,thick] (4,-0.5) .. controls (4,-0.75) and (3.5,-0.75).. (3.5,-1); 
\draw[->,thick] (3.5,-1) .. controls (3.5,-1.15) and (1.5,-1.15).. (1.5,-1.45);
 \draw[->,thick] (4,-1) .. controls (4,-1.25) and (4,-1.25).. (4,-1.45);
\end{tikzpicture}
\end{align}
\end{center}

Therefore, by the universal property of $\mathcal Q^1$, there is a morphism of 
$\PaB$-moperads $\mathcal Q^1 \to \mathbf{PaB}^{1}$, which is the identity on objects.
In order to show that this is an isomorphism, it suffices to show that it is an 
isomorphism at the level of automorphism groups of an object arity-wise because 
all groupoids involved are connected. 
Let $n\geq 0$, and let $p$ be the object $(\cdots(01)2\cdots\cdots)n$ of 
$\mathcal Q^1(n)$ and $\PaB^1(n)$. We want to show that the induced group morphism 
$$
\on{Aut}_{\mathcal Q^1(n)}(p)\longrightarrow \on{Aut}_{\PaB^1(n)}(p)=
\pi_1\big(\bar{\textrm{C}}(\mathbb{C}^\times,n),p\big)
$$
is an isomorphism. 

On the one hand, we can replace the base-point $p$ with $p_{reg}=(1,2,\dots,n)\in 
\textrm{C}(\mathbb{C}^\times,n)$, as they are in the same path-connected component. 
Moreover, since the Axelrod--Singer--Fulton--MacPherson compactification does 
not change the homotopy type of our configuration spaces, we get an isomorphism 
$$
\pi_1(\bar{\textrm{C}}(\mathbb{C}^\times,n),p) \simeq \pi_1(\textrm{C}(\mathbb{C}^\times,n),p_{reg})\,.
$$

On the other hand, in \cite[\S4.4]{En}, Enriquez proves several useful facts: 
\begin{itemize}
\item Given a braided module category $\mathcal M$ over a braided monoidal 
category $\mathcal C$, an object $X$ of $\mathcal C$, and an 
object $M$ of $\mathcal M$, there is a group morphism 
$$
\on{B}_{n}^{1}\to \on{Aut}_{\cM}(M\otimes X^{\otimes n})\,,
$$
where, by convention, $M\otimes X^{\otimes n}$ comes equipped with the 
left-most parenthesization $((M\otimes X)\otimes ... )\otimes X$, 
and $\on{B}_{n}^{1}=\on{B}_{n+1}\times_{\mathfrak{S}_{n+1}}\mathfrak{S}_n$ is generated by elements $\sigma_i$, for $1\leq i \leq n-1$ and $\tau$. Seen in $\on{B}_{n+1}$ with generators $\sigma_0,\ldots \sigma_{n-1}$, we have $\tau=\sigma_0^2=x_{01}$.
\item There is a universal braided module category $\PaB^{1,Enr}$ generated 
by a single object $0$, over the universal braided monoidal 
category $\PaB^{Enr}$ generated by a single object $\bullet$. 
Hence objects of $\PaB^{1,Enr}$ are parenthesizations of $0\bullet\cdots\bullet$, 
and thus $p$ determines an object (which we abusively still denote $p$). 
\item the morphism $\on{B}_{n}^{1}\to \on{Aut}_{\PaB^{1,Enr}}(p)$ is an isomorphism. 
\end{itemize}
One can moreover see that, by construction, $\on{Aut}_{\mathcal Q^1(n)}(p)$ is 
exactly the kernel subgroup 
$$
\ker\left(\on{Aut}_{\PaB^{1,Enr}(n)}(p) \to \mathfrak{S}_n\right)\simeq \on{PB}_{n+1}\,.
$$
Hence we have a commuting diagram 
$$
\xymatrix{
 \on{PB}^{1}_{n} \ar[r]^-{\simeq} \ar[d] &  \on{Aut}_{\mathcal Q^1(n)}(p) \ar[r] \ar[d] & 
\pi_1\left(\overline{\textrm{C}}(\mathbb{C}^\times,n),p\right)  \ar[d] & \ar[l]_-{\simeq}
 \pi_1\left(\textrm{C}(\mathbb{C}^\times,n),p_{reg}\right)\ar[d] \\ 
 \on{B}^{1}_{n} \ar[r]^-{\simeq} \ar[d] &  \on{Aut}_{\PaB^{1,Enr}}(p) \ar[r] \ar[d] & 
\pi_1\left(\overline{\textrm{C}}(\mathbb{C}^\times,n)/\mathfrak S_n,[p] \right) \ar[d]  & \ar[l]_-{\simeq}
\pi_1\left(\textrm{C}(\mathbb{C}^\times,n)/\mathfrak S_n,[p_{reg}]\right) \ar[d] \\
\mathfrak{S}_n \ar@{=}[r]& \mathfrak{S}_n \ar@{=}[r] & \mathfrak{S}_n \ar@{=}[r] & \mathfrak{S}_n
}
$$
where all vertical sequences are short exact sequences. 
Thus, in order to get that the map 
$\on{Aut}_{\mathcal Q^1(n)}(p) \to \pi_1\left(\overline{\textrm{C}}(\mathbb{C}^\times,n),p\right)$
 is an isomorphism, 
we are left to prove that the composite map 
$\on{B}_{n}^1 \longrightarrow \pi_1(\textrm{C}(\mathbb{C}^\times,n)/\mathfrak S_n,[p_{reg}])$ is 
indeed an isomorphism. But this map is, by its very construction, 
the isomorphism (from \cite{Sos,Ver}) exhibiting a presentation by 
generators and relations of the braid group of the annulus. 
\end{proof}

\section{The moperad of twisted parenthesized braids, and cyclotomic GT}
\label{Cyclo}


\subsection{Compactified twisted configuration space of the annulus}

Consider, for $N \geq 1$, the additive group $\Gamma = \mathbb{Z} / N\mathbb{Z}$. 
To every finite set $I$ let us associate the so-called $\Gamma$-\tmtextit{twisted configuration space}
\[
\mathrm{Conf} (\mathbb{C^{\times}}, I, \Gamma) = \{ \mathbf{z} = (z_i)_{i \in I} \in 
(\mathbb{C^{\times}})^I |z_i \neq \zeta z_j, \forall i \neq j, \forall \zeta \in {\mu}_N\}
\]
(${\mu}_N$ is the set of complex $N$th roots of unity) and its reduced version
\[
\text{\gls{CCIM}} \assign \mathrm{Conf} (\mathbb{C^{\times}}, I, \Gamma) / \mathbb{R}_{> 0}\,.
\]

There are inclusions 
\begin{equation}\label{eqn-inclusions}
\mathrm{Conf} (\mathbb{C^{\times}}, I, \Gamma)\hookrightarrow \mathrm{Conf} 
(\mathbb{C^{\times}}, I\times\mu_N)
\qquad\textrm{and}\qquad
\mathrm{C} (\mathbb{C^{\times}}, I, \Gamma)\hookrightarrow \mathrm{C} 
(\mathbb{C^{\times}}, I\times\mu_N)
\end{equation}
given by $(z_i)_{i\in I}\mapsto (\zeta z_i)_{(i,\zeta)\in I\times\mu_N}$. 
This allows us to define the compactification $\overline{\mathrm{C}} 
(\mathbb{C}^{\times}, I, \Gamma)$ of $\mathrm{C} (\mathbb{C}^{\times},I, \Gamma)$, 
as the closure of $\mathrm{C} (\mathbb{C^{\times}}, I, \Gamma)$ inside 
$\overline{\mathrm{C}}(\mathbb{C^{\times}}, I\times\mu_N)$. 
The irreducible components of its boundary 
$\partial\overline{\textrm{C}}(\mathbb{C^{\times}},I,\Gamma)=\overline{\textrm{C}} 
(\mathbb{C^{\times}},I,\Gamma)-
\textrm{C}(\mathbb{C^{\times}},I,\Gamma)$ can be described as follows. 
For an arbitrary partition $J_0\coprod\cdots\coprod J_k$ of $\{0\}\sqcup I$ 
such that $0\in J_0$, there is a component 
\[
\partial_{J_0,\cdots,J_k}\overline{\textrm{C}}(\mathbb{C^{\times}},I,\Gamma)
\cong \overline{\rm C}(\mathbb{C^{\times}},J_0,\Gamma) \times 
\overline{\rm C}(\mathbb{C^{\times}},k,\Gamma) 
\times \prod_{i=1}^k\overline{\rm C}(\mathbb{C},J_i)\,.
\]
The inclusion of these boundary components provides $\overline{\textrm{C}}(\mathbb{C^{\times}},-,\Gamma)$ 
with the structure of a $\overline{\textrm{C}}(\mathbb{C},-)$-moperad in topological spaces. 

\medskip

Observe that $\mathrm{Conf} (\mathbb{C^{\times}}, I, \Gamma)$, resp.~$\mathrm{C} 
(\mathbb{C^{\times}}, I, \Gamma)$, is a $\Gamma^I$-covering space of 
$\mathrm{Conf} (\mathbb{C^{\times}}, I)$, resp.~$\mathrm{C} (\mathbb{C^{\times}}, I)$, 
the covering map being given by $(z_i)_{i\in I}\mapsto (z_i^N)_{i\in I}$. 

We let the reader check that this covering map extends to a continuous map 
$\phi_n:\overline{\mathrm{C}}(\mathbb{C^{\times}}, I, \Gamma)\to \overline{\mathrm{C}}(\mathbb{C^{\times}}, I)$ 
between their compactifications, and thus leads to a morphism of $\overline{\textrm{C}}(\mathbb{C},-)$-moperads. 

Finally, there is a natural action of $\Gamma^I$ on each 
$\mathrm{C} (\mathbb{C^{\times}}, I\times\mu_N)$, given by 
\[
(\mathbf{\alpha}\cdot\mathbf{z})_{(j,\zeta)}:=
\mathbf{z}_{\left(j,\zeta e^{\frac{2\mathrm{i}\pi\alpha_j}{N}}\right)}\,,
\]
which is such that the inclusions \eqref{eqn-inclusions} are $\Gamma^I$ equivariant, and thus 
induces an action of $\Gamma$ on the moperad $\overline{\textrm{C}}(\mathbb{C^{\times}},-,\Gamma)$, 
in the sense of \S\ref{sec-gpaction}. 


\subsection{The $\mathbf{Pa}$-moperad of labelled parenthesized permutations}
\label{ssec:labelperm}

Borrowing the notation from the previous subsection, we define 
$\mathbf{Pa_0^\Gamma}(n):=\phi_n^{-1}\big(\mathbf{Pa_0}(n)\big)$. 
Explicitly, $\mathbf{Pa_0^\Gamma}(n)$ is the set of parenthesized 
permutations of $\{0,1,\dots,n\}$ that fix $0$ and that are equipped 
with a label $\{1,\dots,n\}\to\Gamma$. In terms of configuration spaces, the label $\alpha\in\Gamma^n$ 
expresses the fact that we have a configuration $(z_1,\dots,z_n)$ of points
approaching $0$ with the condition that $e^{\frac{2\mathrm{i}\pi\alpha_j}{N}}z_j\in\mathbb{R}_{>0}\subset\mathbb{C}^\times$ 
for every $j\in\{1,\dots,n\}$.  
\\
\textbf{Notation.} As a matter of notation, we will write the label as an 
index attached to each $1,\dots,n$. 
For instance, $(02_\beta)1_\alpha$ belongs to $\mathbf{Pa_0^\Gamma}(2)$ 
for every $\alpha,\beta\in\Gamma$. \\
Observe that the $\mathfrak{S}$-module (in sets) $\mathbf{Pa_0^\Gamma}$ 
carries the structure of a $\mathbf{Pa}$-moperad. Indeed, it is a fiber product 
$$
\mathbf{Pa_0^\Gamma}=\mathbf{Pa_0}\underset{\overline{\mathrm{C}}
(\mathbb{C^{\times}},-)}{\times}\overline{\mathrm{C}}(\mathbb{C^{\times}},-, \Gamma)
$$
in the category of $\mathbf{Pa}$-moperads (in topological spaces). 
Here are two self-explanatory examples of partial compositions: 
$$
(02_\alpha)1_\beta\circ_2(12)3=(0((2_\alpha3_\alpha)4_\alpha))1_\beta
\qquad\textrm{and}\qquad
(02_\alpha)1_\beta\circ_0(02_\gamma)1_\delta=(((02_\gamma)1_\delta)4_\alpha)3_\beta\,.
$$
\begin{remark}
As we have seen in Subsection \ref{sec-pointings}, 
our convention is such that the $\mathbf{Pa}$-moperad structure on
 $\mathbf{Pa_0^\Gamma}$ gives in particular a morphism of $\mathbf{Pa}$-modules 
$\mathbf{Pa}\to \mathbf{Pa_0^\Gamma}$. 
One can see that it is the map that sends a parenthezised permutation 
$\mathbf{p}$ to $0(\mathbf{p})$ together with the trivial label function $i\mapsto 0$. 
\end{remark}
Finally, $\mathbf{Pa_0^\Gamma}$ is acted on 
by $\Gamma$ in the following way: for $n\geq0$, $\Gamma^n$ 
only acts on the labellings, \textit{via} the group law of $\Gamma$. 
For instance, if $f:\{1,\dots,n\}\to\Gamma$ and $\alpha\in\Gamma^n$, 
then $(\alpha\cdot f)(i)=f(i)+\alpha_i$. 


\subsection{The $\mathbf{PaB}$-moperad of parenthesized cyclotomic braids}\label{sec-4.5}

We define 
$$
\text{\gls{PaBg}}:=\pi_1\left(\overline{\textrm{C}}(\mathbb{C^{\times}},-,
\Gamma),\mathbf{Pa_0^\Gamma}\right)\,.
$$
It is a $\PaB$-moperad (in groupoids), that carries an action of the group $\Gamma$. 
The maps $\phi_n:\overline{\textrm{C}}(\mathbb{C}^{\times},n,\Gamma) 
\to \overline{\textrm{C}}(\mathbb{C}^{\times},n)$ 
induce a $\PaB$-moperad morphism $\PaB^\Gamma \to \PaB^1$. 

\begin{example}[Description of $\mathbf{PaB}^{\Gamma}(1)$]
First observe that $\mathbf{Pa_0^\Gamma}(1)\to \mathbf{Pa_0}(1)$ 
is the terminal map $\mu_N\simeq\{01_\alpha|\alpha\in\Gamma\}\to \{01\}=*$. 
Then observe that the map $\overline{\mathrm{C}}(\mathbb{C}^\times,1,
\Gamma)\to \overline{\mathrm{C}}(\mathbb{C}^\times,1)$ 
is nothing but the path-connected $\Gamma$-cover $S^1\to S^1$. 
Hence we in particular have morphisms $E^{0,1_\alpha}$, 
$\alpha\in \Gamma$ from $01_\alpha$ to 
$01_{\alpha+\bar1}$ in $\PaB^\Gamma(1)$, being the unique lift of 
$E^{0,1}$ that starts at $01_\alpha\in \mathbf{Pa_0^\Gamma}(1)$.  
Pictorially: 
\begin{center}
\begin{tikzpicture}[baseline=(current bounding box.center)]
\tikzstyle point=[circle, fill=black, inner sep=0.05cm]
\tikzstyle point2=[circle, fill=black, inner sep=0.08cm]
\node[point, label=above:$0$] at (1,1) {};
\node[point, label=above:$1_{\bar0}$] at (1.5,1) {};
\node[point, label=below:$0$] at (1,0) {};
\node[point, label=below:$1_{\bar1}$] at (1.5,0) {};
\draw[->,thick] (0.75,0.5) .. controls (0.75,0.35) and (1.5,0.35) .. (1.5,0.05);
\node[point, ,white] at (1,0.37) {};
\draw[->,thick] (1,1) .. controls (1,0.5) .. (1,0.05);
\node[point, ,white] at (1,0.7) {};
\draw[-,thick] (1.5,1) .. controls (1.5,0.75) and (0.75,0.75) .. (0.75,0.5); 

\end{tikzpicture}
\qquad\qquad\qquad
\begin{tikzpicture}[baseline=(current bounding box.center)]
\tikzstyle point=[circle, fill=black, inner sep=0.05cm]
 \node[point, label=above:$0$] at (-3,0) {};
 \node[point, label=right:$z_1$] at (-2.5,0) {};
 \node[point, label=left:$e^{-2\mathrm{i}\pi/N}z_1$] at (-3.3,-0.3) {};
 \draw[->,thick] (-2.5,0) .. controls (-2.9,-0.3) .. (-3.3,-0.3); 
 \node[point, label=left:$0$] at (0,0) {};
 \node[point, label=right:$z_1^N$] at (0.5,0) {};
 \draw[->,thick] (0.5,0) .. controls (-1,-1.5) and (-1,1.5) .. (0.5,0); 
 \draw[->,thick] (-1.7,0) to (-1,0);
 \node at (-1.9,-1) {$z$};
 \node at (-0.65,-1) {$z^N$};
 \draw[|->,thick] (-1.7,-1) to (-1,-1);
\end{tikzpicture}
\\ \text{Two incarnations of $E^{0,1_{\bar0}}$}
\end{center}
\end{example}
In the above picture, on the right we have pictured a path in the twisted configuration space, together with its image under the covering map, which is a loop. 
Diagrammatically (see the left of the above picture), we depict it as a pure braid (a loop in the base configuration space) 
together with appropriate base points (which uniquely determines the lift in the covering twisted configuration space). 

\begin{example}[Notable arrow in $\mathbf{PaB}^{\Gamma}(2)$]
Let $\Psi^{0,1_{\bar0},2_{\bar0}}$ be the unique lift of $\Psi^{0,1,2}$ (a morphism in $\PaB^1(2)$) 
starting at $(01_{\bar0})2_{\bar0}$. It can be depicted as follows: 
\begin{center}
\begin{tikzpicture}[baseline=(current bounding box.center)]
\tikzstyle point=[circle, fill=black, inner sep=0.05cm]
 \node[point, label=above:$(0$] at (1,1) {};
 \node[point, label=below:$0$] at (1,-0.25) {};
 \node[point, label=above:$1_{\bar0})$] at (1.5,1) {};
 \node[point, label=below:$(1_{\bar0}$] at (3.5,-0.25) {};
 \node[point, label=above:$2_{\bar0}$] at (4,1) {};
 \node[point, label=below:$2_{\bar0})$] at (4,-0.25) {};
 \draw[->,thick] (1,1) .. controls (1,0) and (1,0).. (1,-0.20); 
 \draw[->,thick] (1.5,1) .. controls (1.5,0.25) and (3.5,0.5).. (3.5,-0.20);
 \draw[->,thick] (4,1) .. controls (4,0) and (4,0).. (4,-0.20);
\end{tikzpicture}
\end{center}
\end{example}

\begin{remark}
As in Remark \ref{rem4.3-poiting}, one can see from 
\S\ref{sec-pointings} that there is a morphism of $\mathfrak{S}$-modules $\PaB\to\PaB^\Gamma$. 
In pictorial terms, it sends a parenthesized braid with $n$ 
strands to a labelled parenthesized braid with $n+1$ strands 
by adding a frozen strand labelled by $0$ on the left and choosing the trivial label. 
For instance, the images $R^{1_{\bar0},2_{\bar0}}$ of $R^{1,2}$ and 
$\Phi^{1_{\bar0},2_{\bar0},3_{\bar0}}$ of $\Phi^{1,2,3}$ can be respectively depicted as follows: 
\begin{center}
\begin{tikzpicture}[baseline=(current bounding box.center)]
\tikzstyle point=[circle, fill=black, inner sep=0.05cm]
 \node[point, label=above:$0$] at (0,1) {};
 \node[point, label=below:$0$] at (0,-0.25) {};
 \node[point, label=above:$(1_{\bar0}$] at (1,1) {};
 \node[point, label=below:$(2_{\bar0}$] at (1,-0.25) {};
 \node[point, label=above:$2_{\bar0})$] at (2,1) {};
 \node[point, label=below:$1_{\bar0})$] at (2,-0.25) {};
 \draw[->,thick] (0,1) to (0,-0.20) ;
  \draw[->,thick,postaction={decorate}] (1,1) .. controls (1,0.25) and (2,0.5).. (2,-0.20);
 \node[point, ,white] at (1.5,0.4) {};
 \draw[->,thick] (2,1) .. controls (2,0.25) and (1,0.5).. (1,-0.20); 
\end{tikzpicture}
\qquad\qquad\qquad
\begin{tikzpicture}[baseline=(current bounding box.center)]
\tikzstyle point=[circle, fill=black, inner sep=0.05cm]
 \node[point, label=above:$0$] at (0,1) {};
 \node[point, label=below:$0$] at (0,-0.25) {};
 \node[point, label=above:$((1_{\bar0}$] at (1,1) {};
 \node[point, label=below:$(1_{\bar0}$] at (1,-0.25) {};
 \node[point, label=above:$~2_{\bar0})$] at (1.5,1) {};
 \node[point, label=below:$(2_{\bar0}$] at (3.5,-0.25) {};
 \node[point, label=above:$3_{\bar0})$] at (4,1) {};
 \node[point, label=below:$~3_{\bar0}))$] at (4,-0.25) {};
 \draw[->,thick] (0,1) to (0,-0.20) ;
 \draw[->,thick] (1,1) .. controls (1,0) and (1,0).. (1,-0.20); 
 \draw[->,thick] (1.5,1) .. controls (1.5,0.25) and (3.5,0.5).. (3.5,-0.20);
 \draw[->,thick] (4,1) .. controls (4,0) and (4,0).. (4,-0.20);
\end{tikzpicture}
\end{center}
\end{remark}

\noindent\textbf{Notation.} (i) First of all, for any arrow 
$X=X^{0,1_{\bar0},\dots,n_{\bar0}}$ in 
$\mathbf{PaB}^\Gamma(n)$ starting at a parenthesized permutation 
$x$ equipped with the constant labelling equal to $\bar0$, and for any 
$\underline{\alpha}=(\alpha_1,\dots,\alpha_n)\in\Gamma^n$, 
we write $X^{0,1_{\alpha_1},\dots,n_{\alpha_n}}:=\underline{\alpha}\cdot X$, 
which starts now at the same parenthesized permutation $x$ 
equipped with the labelling $\underline{\alpha}$. 

(ii) Second of all, for $p\geq0$, if $X$ ends at the same 
parenthesized permutation $x$, but equipped with a possibly 
non-trivial labelling $\underline{\alpha}$, then we write 
$$
X^{(p)}:=\prod_{k=0,...,p-1}^{\rightarrow}(k\underline{\alpha})\cdot X
=X^{0,1_{\bar0},\dots,n_{\bar0}}X^{0,1_{\alpha_1},\dots,n_{\alpha_n}}\cdots X^{0,1_{(p-1)\alpha_1},\dots,n_{(p-1)\alpha_n}}\,,
$$
which starts at $(x,\underline{\bar0})$ and ends at $(x,p\underline{\alpha})$.  

(iii) Finally, if $\gamma\in\Gamma$ and $1\leq i\leq n$, then 
we write $\gamma_i:=(\bar0,\dots,\bar0,\underset{i}{\gamma},\bar0,\dots,\bar0)$. 
In particular,  
$$
(E^{0,1_{\bar0}})^{(p)}:=\prod_{k=0,...,p-1}^{\rightarrow}E^{0,1_{\bar{k}}}
=E^{0,1_{\bar0}} E^{0,1_{\bar1}}\cdots E^{0,1_{\overline{p-1}}}\,,
$$
which is an element in $\on{Hom}_{\PaB^\Gamma(1)}((0,1_{\bar0}),(0,1_{\bar{p}}))$.


\subsection{The universal property of $\PaB^\Gamma$}

We are now ready to provide an explicit presentation for the $\PaB$-moperad $\PaB^\Gamma$:
\begin{theorem}\label{thm-gamma-pres}
As a $\mathbf{PaB}$-moperad in groupoids with a $\Gamma$-action having $\mathbf{Pa}_0^{\Gamma}$ as 
$\mathbf{Pa_0^\Gamma}$-moperad of objects, $\mathbf{PaB}^{\Gamma}$ is generated by $E^{0,1_{\bar0}}$ and $\Psi^{0,1_{\bar0},2_{\bar0}}$ together with the following relations: 
\begin{flalign}
& \Psi^{0,\emptyset,1_{\bar0}}=\on{Id}_{0,1_{\bar0}}
\quad\Big(\mathrm{in}~\mathrm{Hom}_{\PaB^\Gamma(1)}\big(01_{\bar0},01_{\bar0}\big)\Big)\,,  \tag{tU}\label{eqn:tU} \\
& \Psi^{01_{\bar0},2_{\bar0},3_{\bar0}} \Psi^{0,1_{\bar0},2_{\bar0}3_{\bar0}} 
= \Psi^{0,1_{\bar0},2_{\bar0}} \Psi^{0,1_{\bar0}2_{\bar0},3_{\bar0}} \Phi^{1_{\bar0},2_{\bar0},3_{\bar0}} \tag{tM}\label{eqn:tMP} \\
\nonumber & \Big(\mathrm{in}~\mathrm{Hom}_{\PaB^\Gamma(3)}\big(((01_{\bar0})2_{\bar0})3_{\bar0},0(1_{\bar0}(2_{\bar0}3_{\bar0}))\big)\Big)\,, \\
&  \Psi^{0,1_{\bar0},2_{\bar0}}E^{0,1_{\bar0}2_{\bar0}}( \Psi^{0,1_{\bar1},2_{\bar1}})^{-1}
=E^{0,1_{\bar0}}E^{01_{\bar1},2_{\bar0}} 
\quad\Big(\mathrm{in}~\mathrm{Hom}_{\PaB^\Gamma(2)}\big((01_{\bar0})2_{\bar0},(01_{\bar1})2_{\bar1}\big)\Big)\,,  \tag{tR}\label{eqn:tRP} \\
& E^{01_{\bar0},2_{\bar0}} = \Psi^{0,1_{\bar0},2_{\bar0}} R^{1_{\bar0},2_{\bar0}} (\Psi^{0,2_{\bar0},1_{\bar0}})^{-1} E^{0,2_{\bar0}} 
\Psi^{0,2_{\bar1},1_{\bar0}} R^{2_{\bar1},1_{\bar0}}  (\Psi^{0,1_{\bar0},2_{\bar1}})^{-1}),
 \tag{tO}\label{eqn:tO}\\
\nonumber &  \Big(\mathrm{in}~\mathrm{Hom}_{\PaB^\Gamma(2)}\big((01_{\bar0})2_{\bar0},(01_{\bar0})2_{\bar1}\big)\,. 
\end{flalign}
\end{theorem}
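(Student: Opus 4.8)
The plan is to mirror, step by step, the proof of Theorem \ref{PaB1}. Write $\mathcal{Q}^\Gamma$ for the $\PaB$-moperad in groupoids with $\Gamma$-action presented as in the statement, with $\mathbf{Pa_0^\Gamma}$ as moperad of objects. First I would construct a morphism $F\colon\mathcal{Q}^\Gamma\to\PaB^\Gamma$ of $\PaB$-moperads with $\Gamma$-action that is the identity on objects; by the universal property of $\mathcal{Q}^\Gamma$ this amounts to checking that the arrows $E^{0,1_{\bar0}}\in\PaB^\Gamma(1)$ and $\Psi^{0,1_{\bar0},2_{\bar0}}\in\PaB^\Gamma(2)$ satisfy \eqref{eqn:tU}, \eqref{eqn:tMP}, \eqref{eqn:tRP} and \eqref{eqn:tO}. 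Here one reuses the very pictures drawn in the proof of Theorem \ref{PaB1}, now decorated with $\Gamma$-labels that increase by $\bar1$ each time a strand winds around the frozen strand; equivalently, each of these relations is the unique lift, along the covering-induced morphism $\phi\colon\PaB^\Gamma\to\PaB^1$, of the corresponding relation \eqref{eqn:cU}, \eqref{eqn:MP}, \eqref{eqn:RP}, \eqref{eqn:O} of $\PaB^1$ with a prescribed source object, so that equality in $\PaB^1$ together with uniqueness of path lifting forces equality in $\PaB^\Gamma$ --- the only point to verify being that the two sides have the same target, which is precisely the bookkeeping that produces the $\bar1$-shifts visible in \eqref{eqn:tRP} and \eqref{eqn:tO}. $\Gamma$-equivariance of $F$ is then automatic, since $E^{0,1_\alpha}=\alpha\cdot E^{0,1_{\bar0}}$ and similarly for $\Psi$.

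Next, to show that $F$ is an isomorphism: both $\mathcal{Q}^\Gamma(n)$ and $\PaB^\Gamma(n)$ are connected groupoids --- $\PaB^\Gamma(n)=\pi_1\big(\overline{\textrm{C}}(\mathbb{C}^{\times},n,\Gamma),\mathbf{Pa_0^\Gamma}(n)\big)$ is connected because $\textrm{C}(\mathbb{C}^{\times},n,\Gamma)$, a Zariski-open subset of $(\mathbb{C}^{\times})^n$, is connected and dense in its compactification, and $\mathcal{Q}^\Gamma(n)$ is connected because the $E$'s move between all label sectors while the underlying $\PaB$-moperad moves between all parenthesizations --- so it suffices to show that $F$ induces an isomorphism on the automorphism group of the object $p=p_n:=(\cdots(01_{\bar0})2_{\bar0}\cdots)n_{\bar0}$ in each arity $n$.

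On the target side, $\on{Aut}_{\PaB^\Gamma(n)}(p)=\pi_1\big(\overline{\textrm{C}}(\mathbb{C}^{\times},n,\Gamma),p\big)\simeq\pi_1\big(\textrm{C}(\mathbb{C}^{\times},n,\Gamma),p_{reg}\big)$ --- one may move the basepoint into the open stratum, and the Axelrod--Singer--Fulton--MacPherson compactification does not change the homotopy type --- and by covering space theory (Remark \ref{rem-covering}) this is the kernel of the monodromy homomorphism $\pi_1\big(\textrm{C}(\mathbb{C}^{\times},n),p_{reg}\big)=\on{PB}_{n+1}\to\Gamma^n$; computing with the branched cover $z\mapsto z^N$, this homomorphism sends $x_{0i}$ to the $i$-th generator of $\Gamma^n$ and each $x_{ij}$ with $1\le i<j\le n$ to $0$. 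Call this kernel $\on{PB}_n^\Gamma$ (the pure cyclotomic braid group). On the source side, exactly as for $\mathcal{Q}^1$ in the proof of Theorem \ref{PaB1} but now invoking the cyclotomic analogue of the braided module category input --- Enriquez's quasi-reflection construction in \cite{En}, equivalently the $\Gamma$-twisted braided module category axioms of \cite{Adrien}, whose automorphism groups in arity $n$ are the cyclotomic braid groups $\on{B}_n^\Gamma$ --- the group $\on{Aut}_{\mathcal{Q}^\Gamma(n)}(p)$ is by construction the kernel of $\on{B}_n^\Gamma\to\mathfrak{S}_n$, which is again $\on{PB}_n^\Gamma$ (the relations \eqref{eqn:tU}--\eqref{eqn:tO} being exactly those axioms). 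Arranging these identifications into a commuting ladder of short exact sequences with quotient $\mathfrak{S}_n$, parallel to the one in the proof of Theorem \ref{PaB1}, reduces the whole statement to the claim that the induced map $\on{B}_n^\Gamma\to\pi_1\big(\textrm{C}(\mathbb{C}^{\times},n,\Gamma)/\mathfrak{S}_n,[p_{reg}]\big)$ is an isomorphism; but this map is, by its very construction, the isomorphism exhibiting the generators-and-relations presentation of the cyclotomic braid group established by Enriquez in \cite{En} --- the cyclotomic counterpart of the handlebody braid group presentation of \cite{Sos,Ver} used for $\PaB^1$.

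The step I expect to be the real obstacle is the source-side identification $\on{Aut}_{\mathcal{Q}^\Gamma(n)}(p)\cong\on{PB}_n^\Gamma$: it requires matching conventions with Enriquez's (and Brochier's) presentation of the cyclotomic braid group, and in particular checking that the generator $\tau$ of $\on{B}_n^\Gamma$ corresponds under $F$ to $x_{01}$ --- equivalently that $(E^{0,1_{\bar0}})^{(N)}$ from the discussion preceding the theorem is sent to the generator $x_{01}^{N}$ of the relevant $\pi_1(S^1)$ --- which is the cyclotomic analogue of the identity $\tau=\sigma_0^2=x_{01}$ used in the proof of Theorem \ref{PaB1}. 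One should also carefully verify the monodromy computation above and that \eqref{eqn:tO} really is the lift of \eqref{eqn:O}, so that the $\bar1$-shifts are forced by the lifting; beyond that, everything is formal groupoid and covering-space bookkeeping, identical to the $\PaB^1$ case.
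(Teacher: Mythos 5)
Your first half --- constructing $\mathcal Q^\Gamma\to\PaB^\Gamma$ by checking that $E^{0,1_{\bar0}}$ and $\Psi^{0,1_{\bar0},2_{\bar0}}$ satisfy \eqref{eqn:tU}--\eqref{eqn:tO} as the unique lifts of \eqref{eqn:cU}, \eqref{eqn:MP}, \eqref{eqn:RP} and \eqref{eqn:O} along $\PaB^\Gamma\to\PaB^1$ --- is exactly what the paper does, and the reduction to automorphism groups of the left-most object in each arity is likewise shared. The gap is in the step you yourself flag as ``the real obstacle'': the identification $\on{Aut}_{\mathcal Q^\Gamma(n)}(\tilde p)\simeq\on{PB}_n^\Gamma$. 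You propose to obtain it by citing a generators-and-relations presentation of a cyclotomic braid group $\on{B}_n^\Gamma$ from \cite{En} (or \cite{Adrien}) whose axioms would be ``exactly'' \eqref{eqn:tU}--\eqref{eqn:tO}. No such presentation is established in those references in the form you need, and asserting that the relations of $\mathcal Q^\Gamma$ present $\on{PB}_n^\Gamma$ is essentially the statement being proven; as written, the $\mathfrak S_n$-ladder you build reduces the theorem to an input that does not exist independently of it. (For $\PaB^1$ the analogous input does exist --- the handlebody braid group presentation of \cite{Sos,Ver} --- which is why that strategy works for Theorem \ref{PaB1} but does not transport verbatim to the twisted setting.)

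The paper closes this gap differently: it compares $\mathcal Q^\Gamma$ with $\mathcal Q^1$ (already identified with $\PaB^1$) through a ladder of short exact sequences with quotient $\Gamma^n$ rather than $\mathfrak S_n$, the right-hand column being the covering-space sequence of $\mathrm{C}(\mathbb{C}^\times,n,\Gamma)\to\mathrm{C}(\mathbb{C}^\times,n)$ from Remark \ref{rem-covering}. The genuinely new work is then to prove exactness of $1\to\on{Aut}_{\mathcal Q^\Gamma(n)}(\tilde p)\to\on{Aut}_{\mathcal Q^1(n)}(p)\to\Gamma^n\to1$: injectivity holds because every relation-move on a word representing an automorphism in $\mathcal Q^1$ lifts uniquely to a relation-move in $\mathcal Q^\Gamma$, and the image equals the kernel because relation \eqref{eqn:O} lets one rewrite any automorphism so that the only $E$'s occurring are of the form $E^{0,i}$, whose algebraic multiplicities modulo $N$ compute the map to $\Gamma^n$ (defined via the auxiliary moperad morphism $\PaB^1\to\underline{\Gamma}$ sending $E$ to $\bar1$ and $\Psi$ to $(\bar0,\bar0)$). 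Some word-combinatorial argument of this kind is what your proposal is missing; if you insist on your route, you would first have to prove the presentation of $\on{B}_n^\Gamma$ you are invoking, which amounts to the same work.
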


\begin{proof}
Let $\mathcal Q^{\Gamma}$ be the $\PaB$-moperad with the above presentation, 
and recall that $\mathcal Q^{1}$ 
is the $\PaB$-moperad with the presentation of Theorem \ref{PaB1}. 
Our first goal is to show that there is a morphism 
$\mathcal Q^{\Gamma}\to\PaB^\Gamma$ of $\PaB$-moperads, 
commuting with the $\Gamma$-action.  
We have already seen in the Examples above that there are 
morphisms $E^{0,1_{\bar0}}$ and $\Psi^{0,1_{\bar0},2_{\bar0}}$, in $\PaB^\Gamma(1)$ and 
$\PaB^\Gamma(2)$, respectively. 
We have to prove that they satisfy the \textit{mixed pentagon} and 
\textit{twisted octogon} relation, \eqref{eqn:tMP} and \eqref{eqn:tO} and \eqref{eqn:tRP}.

\medskip

These relations are the unique lifts of the similar relations 
\eqref{eqn:MP}, \eqref{eqn:RP} and \eqref{eqn:O} in $\PaB^1$ from 
Theorem \ref{PaB1}, starting at $((01_{\bar0})2_{\bar0})3_{\bar0}$ and $(01_{\bar0})2_{\bar0}$, 
respectively. They can be depicted as follows: 
\begin{center}
\begin{align}\tag{\ref{eqn:tMP}}
\begin{tikzpicture}[baseline=(current bounding box.center)] 
\tikzstyle point=[circle, fill=black, inner sep=0.05cm]
\tikzstyle point2=[circle, fill=black, inner sep=0.08cm]
\node[point, label=above:$((0$] at (1,2) {};
\node[point, label=above:$1_{\bar0})$] at (1.5,2) {};
\node[point, label=above:$2_{\bar0})$] at (2.5,2) {};
\node[point, label=above:$3_{\bar0}$] at (5,2) {};
\node[point, label=below:$0$] at (1,0) {};
\node[point, label=below:$(1_{\bar0}$] at (3.5,0) {};
\node[point, label=below:$(2_{\bar0}$] at (4.5,0) {};
\node[point, label=below:$3_{\bar0}))$] at (5,0) {};
 \draw[->,thick] (1,2) .. controls (1,0.5) .. (1,0.05);
 \draw[-,thick] (1.5,2) .. controls (1.5,1.5) .. (1.5,1);
 \draw[-,thick] (2.5,2) .. controls (2.5,1.5) and (4.5,1.5).. (4.5,1);
 \draw[->,thick] (1.5,1) .. controls (1.5,0.5) and (3.5,0.5).. (3.5,0.05);
 \draw[->,thick] (4.5,1) .. controls (4.5,0.5) .. (4.5,0.05);
 \draw[->,thick] (5,2) .. controls (5,0.5) .. (5,0.05);
\end{tikzpicture}
=
\begin{tikzpicture}[baseline=(current bounding box.center)] 
\tikzstyle point=[circle, fill=black, inner sep=0.05cm]
\tikzstyle point2=[circle, fill=black, inner sep=0.08cm]
\node[point, label=above:$((0$] at (1,2) {};
\node[point, label=above:$1_{\bar0})$] at (1.5,2) {};
\node[point, label=above:$2_{\bar0})$] at (2.5,2) {};
\node[point, label=above:$3_{\bar0}$] at (5,2) {};
\node[point, label=below:$0$] at (1,0) {};
\node[point, label=below:$(1_{\bar0}$] at (3.5,0) {};
\node[point, label=below:$(2_{\bar0}$] at (4.5,0) {};
\node[point, label=below:$3_{\bar0}))$] at (5,0) {};
 \draw[->,thick] (1,2) .. controls (1,0.5) .. (1,0.05);
 \draw[-,thick] (2.5,2) .. controls (2.5,1.5) .. (2.5,1.3);
 \draw[-,thick] (1.5,2) .. controls (1.5,1.7) and (2,1.7).. (2,1.3);
 \draw[-,thick] (2,1.3) .. controls (2,0.9) and (3.5,0.9).. (3.5,0.65);
 \draw[-,thick] (2.5,1.3) .. controls (2.5,0.9) and (4,0.9).. (4,0.65);
 \draw[->,thick] (4,0.65) .. controls (4,0.4) and (4.5,0.4).. (4.5,0.05);
 \draw[->,thick] (3.5,0.65) .. controls (3.5,0.5) .. (3.5,0.05);
 \draw[-,thick] (5,2) .. controls (5,0.5) .. (5,0.05);
\end{tikzpicture}
\end{align}
\end{center}
\begin{center}
\begin{align}\tag{\ref{eqn:tRP}}
\begin{tikzpicture}[baseline=(current bounding box.center)]
\tikzstyle point=[circle, fill=black, inner sep=0.05cm]
\tikzstyle point2=[circle, fill=black, inner sep=0.08cm]
\tikzstyle point3=[circle, fill=black, inner sep=0.5cm]
 \node[point, label=above:$(0$] at (1,2) {};
 \node[point, label=below:$(0$] at (1,-1.5) {};
 \node[point, label=above:$1_{\bar0})$] at (1.5,2) {};
 \node[point, label=below:$1_{\bar1})$] at (1.5,-1.5) {};
 \node[point, label=above:$2_{\bar0}$] at (4,2) {};
 \node[point, label=below:$2_{\bar1}$] at (4,-1.5) {};
\draw[->,thick] (1,2) .. controls (1,1) and (1,1).. (1,-1.45); 
\draw[-,thick] (1.5,2) .. controls (1.5,1.75) and (3.5,1.75).. (3.5,1.5);
 \draw[-,thick] (1,2) .. controls (1,0.5) .. (1,-0.95);
 \node[point2, ,white] at (1,1) {};
 \node[point2, ,white] at (1,0.8) {};
   \draw[-,thick] (0.25,0.5) .. controls (0.25,-0.25) and (4,-0.25) .. (4,-1); 
  \node[point3, ,white] at (2.4,-0.5) {}; 
  \node[point3, ,white] at (2.2,-0.75) {};
  \node[point3, ,white] at (2,-0.5) {};
  \draw[-,thick] (0.75,0.5) .. controls (0.75,-0.25) and (3.5,-0.25) .. (3.5,-1); 
     \node[point2, ,white] at (1,0) {};
 \node[point2, ,white] at (1,0.2) {};
 \draw[-,thick] (3.5,1.5) .. controls (3.5,1.25) and (0.75,1.25) .. (0.75,0.5); 
  \node[point3, ,white] at (2.2,1) {};
  \node[point3, ,white] at (2.6,1) {};
  \node[point3, ,white] at (2,1) {};
\draw[-,thick] (4,1.5) .. controls (4,1.25) and (0.25,1.25) .. (0.25,0.5); 
\draw[->,thick] (3.5,-1) .. controls (3.5,-1.25) and (1.5,-1.25).. (1.5,-1.45);
\draw[-,thick] (4,2) .. controls (4,1.5) and (4,1.5).. (4,1.5); 
\draw[->,thick] (4,-1) .. controls (4,-1) and (4,-1).. (4,-1.45); 
 \draw[-,thick] (1,0.5) .. controls (1,0.5) .. (1,-0.95);
\end{tikzpicture}
=
\begin{tikzpicture}[baseline=(current bounding box.center)]
\tikzstyle point=[circle, fill=black, inner sep=0.05cm]
\tikzstyle point2=[circle, fill=black, inner sep=0.08cm]
 \node[point, label=above:$(0$] at (1,2) {};
 \node[point, label=below:$(0$] at (1,-1) {};
 \node[point, label=above:$1_{\bar0})$] at (1.5,2) {};
 \node[point, label=below:$1_{\bar1})$] at (1.5,-1) {};
 \node[point, label=above:$2_{\bar0}$] at (4,2) {};
 \node[point, label=below:$2_{\bar1}$] at (4,-1) {};
  \draw[-,thick] (1,2) .. controls (1,0.5) .. (1,-0.95);
 \node[point, ,white] at (1,0.35+1.5) {};
 \draw[-,thick] (1.5,0.5+1.5) .. controls (1.5,0.35+1.5) and (0.75,0.35+1.5) .. (0.75,0.25+1.5); 
 \draw[-,thick] (0.75,0.25+1.5) .. controls (0.75,0.15+1.5) and (1.5,0.15+1.5) .. (1.5,0+1.5); 
 \node[point, ,white] at (1,0.15+1.5) {};
 \draw[->,thick] (0.75,0.5) .. controls (0.75,-0.25) and (4,-0.25) .. (4,-0.95); 
  \draw[->,thick] (1,1.5) .. controls (1,0.5) .. (1,-0.95);
 \draw[->,thick] (1.5,1.5) .. controls (1.5,0.5) .. (1.5,-0.95);
 \node[point2, ,white] at (1.5,1.05) {};
 \node[point2, ,white] at (1,0.8) {};
 \draw[-,thick] (4,2) .. controls (4,1.25) and (0.75,1.25) .. (0.75,0.5); 
 \node[point2, ,white] at (1.5,0) {};
 \node[point2, ,white] at (1,0.2) {};
   \draw[-,thick] (1,0.75) .. controls (1,0.5) .. (1,-0.95);
      \draw[-,thick] (1,1.75) .. controls (1,1.5) .. (1,1.5);
     \draw[-,thick] (1.5,0.75) .. controls (1.5,0.5) .. (1.5,-0.95); 
\end{tikzpicture} 
\end{align}
\end{center}
and
\begin{center}
\begin{align}\tag{\ref{eqn:tO}}
\begin{tikzpicture}[baseline=(current bounding box.center)]
\tikzstyle point=[circle, fill=black, inner sep=0.05cm]
\tikzstyle point2=[circle, fill=black, inner sep=0.08cm]
 \node[point, label=above:$(0$] at (1,2) {};
 \node[point, label=below:$(0$] at (1,-1) {};
 \node[point, label=above:$1_{\bar0})$] at (1.5,2) {};
 \node[point, label=below:$1_{\bar0})$] at (1.5,-1) {};
 \node[point, label=above:$2_{\bar0}$] at (4,2) {};
 \node[point, label=below:$2_{\bar1}$] at (4,-1) {};
\draw[->,thick] (0.75,0.5) .. controls (0.75,-0.25) and (4,-0.25) .. (4,-0.95); 
 \node[point2, ,white] at (1.5,0) {};
 \node[point2, ,white] at (1,0.2) {};
 \draw[->,thick] (1,2) .. controls (1,0.5) .. (1,-0.95);
 \draw[->,thick] (1.5,2) .. controls (1.5,0.5) .. (1.5,-0.95);
 \node[point2, ,white] at (1.5,1.05) {};
 \node[point2, ,white] at (1,0.8) {};
 \draw[-,thick] (4,2) .. controls (4,1.25) and (0.75,1.25) .. (0.75,0.5); 
\end{tikzpicture} 
= 
\begin{tikzpicture}[baseline=(current bounding box.center)] 
\tikzstyle point=[circle, fill=black, inner sep=0.05cm]
\tikzstyle point2=[circle, fill=black, inner sep=0.08cm]
 \node[point, label=above:$(0$] at (1,2) {};
 \node[point, label=below:$(0$] at (1,-1.5) {};
 \node[point, label=above:$1_{\bar0})$] at (1.5,2) {};
 \node[point, label=below:$1_{\bar0})$] at (1.5,-1.5) {};
 \node[point, label=above:$2_{\bar0}$] at (4,2) {};
 \node[point, label=below:$2_{\bar1}$] at (4,-1.5) {};
\draw[->,thick] (1,2) .. controls (1,1) and (1,1).. (1,-1.45); 
\draw[-,thick] (1.5,2) .. controls (1.5,1.75) and (3.5,1.75).. (3.5,1.5);
\draw[-,thick] (4,2) .. controls (4,1.5) and (4,1.5).. (4,1.5);
\draw[-,thick] (3.5,1.5) .. controls (3.5,1.25) and (4,1.25).. (4,1); 
\node[point2, ,white] at (3.75,1.25) {};
\draw[-,thick] (4,1.5) .. controls (4,1.25) and (3.5,1.25).. (3.5,1); 
\draw[-,thick] (3.5,1) .. controls (3.5,0.75) and (1.5,0.75).. (1.5,0.5);
  \draw[-,thick] (4,1) .. controls (4,0.5) and (4,0.5).. (4,0.5);
 \draw[-,thick] (1,2) .. controls (1,0.5) .. (1,-0.95);
 \node[point, ,white] at (1,0.35) {};
 \draw[-,thick] (1.5,0.5) .. controls (1.5,0.35) and (0.75,0.35) .. (0.75,0.25); 
 \draw[-,thick] (0.75,0.25) .. controls (0.75,0.15) and (1.5,0.15) .. (1.5,0); 
 \node[point, ,white] at (1,0.15) {};
 \draw[-,thick] (1,0.25) .. controls (1,0.25) .. (1,-0.95);
 \draw[-,thick] (4,0.5) .. controls (4,0.25) and (4,0.25).. (4,-0.5);
 \draw[-,thick] (1.5,0) .. controls (1.5,-0.25) and (3.5,-0.25).. (3.5,-0.5);
\draw[-,thick] (3.5,-0.5) .. controls (3.5,-0.75) and (4,-0.75).. (4,-1); 
\node[point2, ,white] at (3.75,-0.75) {};
\draw[-,thick] (4,-0.5) .. controls (4,-0.75) and (3.5,-0.75).. (3.5,-1); 
\draw[->,thick] (3.5,-1) .. controls (3.5,-1.15) and (1.5,-1.15).. (1.5,-1.45);
 \draw[->,thick] (4,-1) .. controls (4,-1.25) and (4,-1.25).. (4,-1.45);
\end{tikzpicture}
\end{align}
\end{center}

\medskip

By the universal property of $\mathcal Q^{\Gamma}$ there is a 
$\Gamma$-equivariant morphism of $\PaB$-moperads 
$Q^{\Gamma} \longrightarrow \PaB^\Gamma$, which is the 
identity on objects. 
As before, in order to show that this is an isomorphism, it 
suffices to show that it is an isomorphism at the level 
of automorphism groups of an object arity-wise (because all 
groupoids involved are connected). 
Let $n\geq 0$, and let $\tilde p$ be the object 
$(\cdots(01_{\bar0})2_{\bar0}\cdots\cdots)n_{\bar0}$ of $\mathcal Q^\Gamma(n)$ and 
$\PaB^\Gamma(n)$, which lifts the object 
$p=(\cdots(01)2\cdots\cdots)n$ of $\mathcal Q^1(n)\simeq\PaB^1(n)$. 
We want to show that the induced group morphism 
$$
\on{Aut}_{\mathcal Q^\Gamma(n)}(\tilde p)\longrightarrow 
\on{Aut}_{\PaB^\Gamma(n)}(\tilde p)=
\pi_1\big(\bar{\textrm{C}}(\mathbb{C}^\times,n,\Gamma),\tilde p\big)
$$
is an isomorphism. 

We claim that it fits into a commuting diagram
$$
\xymatrix{
 \on{Aut}_{\mathcal Q^\Gamma(n)}(\tilde p) \ar[r] \ar[d] & 
 \pi_1\left(\overline{\textrm{C}}(\mathbb{C}^\times,n,\Gamma),\tilde p\right) \ar[d] 
 & \ar[l]_-{\simeq}
 \pi_1\left(\textrm{C}(\mathbb{C}^\times,n,\Gamma),\tilde p_{reg}\right)\ar[d] \\ 
 \on{Aut}_{\mathcal Q^1(n)}(p) \ar[r]^-{\simeq} \ar[d] & 
 \pi_1\left(\overline{\textrm{C}}(\mathbb{C}^\times,n),p \right)  \ar[d] & 
 \ar[l]_-{\simeq}
 \pi_1\left(\textrm{C}(\mathbb{C}^\times,n)),p_{reg}\right) \ar[d] \\
 \Gamma^{n} \ar@{=}[r] &\Gamma^{n} \ar@{=}[r]&  \Gamma^{n}
}
$$
where only the left-most vertical arrows remain to be described. \\
\underline{The morphism $\on{Aut}_{\mathcal Q^1(n)}(p)\to\Gamma^n$.} 
Let $*$ be the terminal operad in groupoids. 
We have a $*$-moperad structure on the following $\mathfrak{S}$-module 
in groupoids: $\underline{\Gamma}=\{\Gamma^n\}_{\n\geq0}$, 
where we view a group as a groupoid with only one object, and 
where the action of the symmetric group is by permutation. 
The moperad structure is described as follows: 
\begin{itemize}
\item $\circ_0:\Gamma^n\times\Gamma^m\to\Gamma^{n+m}$ 
is the concatenation of sequences, 
\item for every $i\neq 0$, $\circ_i:\Gamma^n\to\Gamma^{n+m-1}$ 
is the partial diagonal 
$$
(\alpha_1,\dots,\alpha_n)\longmapsto (\alpha_1,\dots,\alpha_{i-1},
\underbrace{\alpha_i,\dots,\alpha_i}_{m~\textrm{times}},\alpha_{i+1},\dots,\alpha_n)\,.
$$
\end{itemize}
We let the reader check that sending $E$ to $\bar1\in\Gamma$ and 
$\Psi$ to $(\bar0,\bar0)\in\Gamma^2$ defines a moperad 
morphism $\PaB^1\to\underline{\Gamma}$ along the terminal 
operad morphism $\PaB\to*$. 
This in particular induces a group morphism 
$$
\on{Aut}_{\mathcal Q^1(n)}(p)\longrightarrow\Gamma^n,
$$
for every $n\geq0$. Heuristically, this morphism counts, for 
every $i$, and modulo $N$, the number of times that 
$E^{0,i}$ appears in an element of $\on{Aut}_{\mathcal Q^1(n)}(p)$. 
It is obviously surjective, and we let the reader check that the 
following triangle commutes: 
$$
\xymatrix{
 \on{Aut}_{\mathcal Q^1(n)}(p) \ar[r]^-{\simeq} \ar[dr] & 
 \pi_1\left(\overline{\textrm{C}}(\mathbb{C}^\times,n),p \right)  \ar[d] \\
 & \Gamma^{n} 
}
$$
\underline{The morphism 
$\on{Aut}_{\mathcal Q^\Gamma(n)}(\tilde p)\to \on{Aut}_{\mathcal Q^1(n)}(p)$.} We have a 
$\Gamma$-equivariant morphism of \linebreak
$\PaB$-moperads $\mathcal Q^\Gamma\to\mathcal Q^1$, 
where $\Gamma$ acts trivially on $\mathcal Q^1$, which forgets the label on objects, and sends the generators 
$E^{0,1_{\bar0}}$ and $\Psi^{0,1_{\bar0},2_{\bar0}}$ to $E$ and $\Psi$, respectively. It obviously fits into 
a commuting square 
$$
\xymatrix{
 \mathcal Q^\Gamma \ar[r]\ar[d] & \PaB^\Gamma \ar[d] \\
 \mathcal Q^1 \ar[r] & \PaB^1
}
$$
of $\PaB$-moperads. This induces in particular a group morphism 
$$
\on{Aut}_{\mathcal Q^\Gamma(n)}(\tilde p)\longrightarrow \on{Aut}_{\mathcal Q^1(n)}(p),
$$
for every $n\geq0$, that fits into a commuting square
$$
\xymatrix{
 \on{Aut}_{\mathcal Q^\Gamma(n)}(\tilde p) \ar[r] \ar[d] & 
 \pi_1\left(\overline{\textrm{C}}(\mathbb{C}^\times,n,\Gamma),\tilde p\right) \ar[d] \\ 
 \on{Aut}_{\mathcal Q^1(n)}(p) \ar[r]^-{\simeq} & 
 \pi_1\left(\overline{\textrm{C}}(\mathbb{C}^\times,n),p \right)
}
$$

We now turn to the proof of the fact that the left-most vertical 
sequence is a short exact sequence, which shows that 
$$
\on{Aut}_{\mathcal Q^\Gamma(n)}(\tilde p)\longrightarrow 
\on{Aut}_{\PaB^\Gamma(n)}(\tilde p)=\pi_1\big(\bar{\textrm{C}}
(\mathbb{C}^\times,n,\Gamma),\tilde p\big)
$$
is an isomorphism. We already know that the morphism 
$\on{Aut}_{\mathcal Q^1(n)}(p)\to\Gamma^n$ is surjective. \\
\underline{The morphism $\on{Aut}_{\mathcal Q^\Gamma(n)}(\tilde p)\to 
\on{Aut}_{\mathcal Q^1(n)}(p)$ is injective.} Indeed, an automorphism 
of $\tilde{p}$ in \linebreak
$\mathcal Q^\Gamma(n)$ can be 
represented by a finite sequence $\tilde{S}$ of $R$'s, 
$\Phi$'s, $E$'s, $\Psi$'s, 
and their images under the action of $\Gamma^n$. 
The image of such an automorphism under 
$\mathcal Q^\Gamma\to\mathcal Q^1$ 
is represented by the corresponding finite sequence 
$S$ of $R$'s, $\Phi$'s, $E$'s and $\Psi$'s. 
Every modification of $S$ using the relations 
\eqref{eqn:MP}, \eqref{eqn:RP} and \eqref{eqn:O} can be lifted (uniquely) to a 
modification of $\tilde{S}$ using \eqref{eqn:tMP}, \eqref{eqn:tRP} and \eqref{eqn:tO} or 
their images under the action of $\Gamma^n$. 
Hence, if an automorphism has trivial image, then it must be trivial. \\
\underline{The sequence is exact.} We already know 
from the commuting diagram that the image of 
$\on{Aut}_{\mathcal Q^\Gamma(n)}(\tilde p)$ in 
$\on{Aut}_{\mathcal Q^1(n)}(p)$ lies 
in the kernel of $\on{Aut}_{\mathcal Q^1(n)}(p)\to\Gamma^n$. 
We finally can show that the image is exactly the kernel. Indeed:
\begin{itemize}
\item Using \eqref{eqn:O}, every element $g$ in 
$\on{Aut}_{\mathcal Q^1(n)}(p)$ can be written as a 
product of $\Phi$'s, $R$'s, $\Psi$'s and $E$'s, where the only 
$E$'s appearing are of the form $E^{0,i}$. 
\item Such an element admits a unique lift to a morphism $\tilde{g}$ 
in $\mathcal Q^\Gamma(n)$, with 
source being $\tilde p$ (one just replace $\Phi$'s, $R$'s, $\Psi$'s 
and $E$'s in the expression for $g$ 
by the same symbols, maybe acted on by 
$\Gamma^n$ in order to get the correct starting objects). 
\item An element $g$ as above lies in 
$$
\ker\big(\on{Aut}_{\mathcal Q^1(n)}(p)\longrightarrow\Gamma^n\big)
$$
if and only if for every $i$, the number of occurence of $E^{0,i}$ 
(counted in an algebraic way) is a multiple of $N$. 
This tells us in particular that the target of the lifted morphism 
shall be the same as its source, so that $\tilde{g}$ lies in the kernel. 
\end{itemize}
This ends the proof of the Proposition. 
\end{proof}


\subsection{Cyclotomic Grothendieck-Teichm\"uller groups}\label{sec:cycloGT}

We let $\on{Mop}\mathcal C$ be the category of pairs $(\mathcal O,\mathcal M)$, with $\mathcal O$ 
an operad and $\mathcal M$ a $\mathcal O$-moperad, in a symmetric monoidal category $\mathcal C$. 
A morphism $(\mathcal O,\mathcal M)\to (\mathcal P,\mathcal N)$ is a pair $(F,G)$, with 
$F:\mathcal O\to\mathcal P$ an operad morphism and $G:\mathcal M\to \mathcal N$ a 
$\mathcal O$-moperad morphism, where the $\mathcal O$-moperad structure on $\mathcal N$ 
is defined from its $\mathcal P$-moperad structure by applying $F$. 

In addition to the superscript ``$+$'', wich means, as in \S\ref{sec:2.8assoc}, that we are 
considering morphisms of groupoids/categories that are the identity on objects, we may also add, 
as usual, a superscript ``$\Gamma$'' for $\Gamma$-equivariant morphisms. 
 
\begin{definition}
The ($\kk$-pro-unipotent version of the) 
\textit{cyclotomic Grothendieck-Teichm\"uller group} is defined as the group 
$$
\text{\gls{kGTg}}:=\on{Aut}_{\on{Mop}\mathbf{Grpd}_{\kk}}^+\big(\widehat \PaB(\KK),\widehat \PaB^{\Gamma}(\KK)\big)^{\Gamma}
$$
of $\Gamma$-equivariant automorphisms of the pair $\big(\widehat \PaB(\KK),\widehat \PaB^{\Gamma}(\KK)\big)$ 
which are the identity on objects. 
\end{definition}

Our main goal in this subsection is to relate this cyclotomic Grothendieck-Teichm\"uller group 
with one of those introduced by Enriquez in \cite{En}. 

\medskip

Let us recall that $\on{PB}_2\simeq\on{PB}_1^1$ is identified 
with the free group $\on{F}_1$ generated by a single generators 
$x$ (being $x_{12}$ in $\on{PB}_2$, and $x_{01}$ in $\on{PB}_1^1$). 
Let us also recall that the quotient of $\on{PB}_3\simeq\on{PB}_2^1$ 
by its center (which is freely generated by a single 
element) is a free group $\on{F}_2$ generated by two elements $x,y$. 
As usual, we will consider the inclusion of 
$\on{F}_2$ in $\on{PB}_3$ (resp.~$\on{PB}_2^1$) sending $x$ to $x_{12}$ 
(resp.~$x_{01}$), and $y$ to $x_{23}$ (resp.~$x_{12}$). 
Recall finally that $\on{PB}_n^\Gamma$ is the kernel of the 
morphism $\on{PB}_n^1\to\Gamma^n$ sending $x_{0j}$ to $\bar1_j$, 
and the other generators to $(\bar0,\dots,\bar0)$. 

Whenever $n=1$, this is nothing but the morphism 
$\on{F}_1\to \Gamma$ sending $x$ to $\bar1$, having kernel freely 
generated by $X=x^N$. 
Finally notice that the morphism $\phi_{N}:\on{F}_2\to \Gamma$ 
sending $x$ to $\bar1$ and $y$ to $\bar0$ fits into the 
following commuting square: 
\[
\xymatrix{
 \on{F}_2 \ar[r]\ar[d] & \ar[d] \on{PB}_2^1 \\
 \Gamma \ar[r]^{(id,\bar0)} & \Gamma^2
 }
\]
It induces a morphism between the kernels 
$\on{F}_{N+1}\simeq\ker\phi_N\to\on{PB}_2^\Gamma$. 
The generators of $\ker\phi_N$ are $X=x^N$ and 
$y(a)=x^{-a}yx^a$, $1\leq a\leq N-1$. 

\medskip

An element of the cyclotomic Grothendieck-Teichm\"uller group 
$\widehat{\GT}^\Gamma(\KK)$ first depends 
on an automorphism $F$ of $\widehat{\PaB}(\KK)$, 
which is determined by a pair $(\lambda,f)$, 
where $\lambda\in\KK^\times$ and $f\in\hat{\on{F}}_2(\KK)$ 
satisfying the relations from \S \ref{sec:2.8assoc}:
\begin{itemize}
\item $F(R^{1,2})=x_{12}^{\frac{\lambda-1}{2}} R^{1,2}$, 
\item $F(\Phi^{1,2,3})=f(x_{12},x_{23})\Phi^{1,2,3}$. 
\end{itemize}
Then we have an automorphism $G$ of $\widehat{\PaB}^\Gamma(\KK)$, 
compatible with $F$, which is likewise determined 
by the images of $E^{0,1_{\bar0}} \in \on{Hom}_{\widehat{\PaB}^{\Gamma}(\KK)(1)}(01_{\bar0},01_{\bar1})$ and 
$\Psi^{0,1_{\bar0},2_{\bar0}} \in \on{Hom}_{\widehat{\PaB}^{\Gamma}(\KK)(2)}\big((01_{\bar0})2_{\bar0},0(1_{\bar0}2_{\bar0})\big)$: 
\begin{itemize}
\item $G(E^{0,1_{\bar0}})=u E^{0,1_{\bar0}}$, with 
$u=X^{\mu_1}=x^{N\mu_1}$ for some $\mu_1\in\KK^{\times}$, necessarily, 
\item $G(\Psi^{0,1_{\bar0},2_{\bar0}})=v \Psi^{0,1_{\bar0},2_{\bar0}}$,
\end{itemize}
where $v\in\widehat{\on{PB}}_2^\Gamma(\kk)
\subset\widehat{\on{PB}}_2^1(\kk)$ can be written as 
$C^{\mu_2}g(x_{01},x_{12})$, with $C$ a central generator of ${\ker\phi_N}$ and 
$g\in\widehat{\ker\phi}_N(\KK)\subset\hat{\on{F}}_2(\KK)$. \\

\noindent\textbf{Notation.} We will also write 
$g\big(X,y(0),\dots,y(N-1)\big)$ when we want to view $g$ in 
$\hat{\on{F}}_{N+1}(\KK)\simeq\widehat{\ker\phi_N}(\KK)$. \\

Relation \eqref{eqn:tU} tells us that $X^{\mu_2}=v^{0,\emptyset,1}=1$, and thus that $\mu_2=0$. 
Indeed, the morphism $(-)^{0,\emptyset,1}:\on{PB}_2^\Gamma\to \on{PB}_1^\Gamma\simeq\on{F}_1$ sends 
${\ker\phi_N}$ to $1$, and $x_{02}^N$ (as well as the central generator) to $X=x^N$. 
We conclude that $v=g(x_{01},x_{12})=g\big(X,y(0),\dots,y(N-1)\big)$. 

\begin{proposition}\label{prop-MPO}
The elements $(\lambda,f,\mu,g)$ satisfy 
\begin{equation}\label{eqn:GT:(MP)}
g(x_{02}x_{12},x_{23})g(x_{01},x_{12}x_{13}) = 
g(x_{01},x_{12})g(x_{01}x_{02},x_{13}x_{23})f(x_{12}x_{23})\,,
\end{equation}
and, for $\alpha=\bar{1}\in\Gamma$,
\begin{equation}\label{eqn:GT:(tO)}
x^{\mu}g(x,y)y^{{{\lambda+1}\over 2}}g(z,y)^{-1}
z^{\mu}\alpha \cdot (g(z,y)y^{{{\lambda-1}\over 2}} g(x,y)^{-1})
=1\quad\big(\textrm{in }\hat{\on{F}}_2(\phi_N,\KK)\big)~~~~xyz=1.
\end{equation}

\end{proposition}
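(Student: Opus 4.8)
The plan is to exploit the presentation of $\widehat{\PaB}^\Gamma(\KK)$ from Theorem~\ref{thm-gamma-pres}. An element of $\widehat{\GT}^\Gamma(\KK)$ is a pair $(F,G)$, where $F$ is an automorphism of $\widehat{\PaB}(\KK)$ described by $(\lambda,f)$ as in \S\ref{sec:2.8assoc}, and $G$ is a $\Gamma$-equivariant automorphism of $\widehat{\PaB}^\Gamma(\KK)$ compatible with $F$ and the identity on objects; by Theorem~\ref{thm-gamma-pres}, giving $G$ amounts to giving the two arrows $G(E^{0,1_{\bar0}})=u\,E^{0,1_{\bar0}}$ and $G(\Psi^{0,1_{\bar0},2_{\bar0}})=v\,\Psi^{0,1_{\bar0},2_{\bar0}}$ subject precisely to the four relations \eqref{eqn:tU}, \eqref{eqn:tMP}, \eqref{eqn:tRP}, \eqref{eqn:tO} (with $E^{0,1_{\bar0}}$ and $\Psi^{0,1_{\bar0},2_{\bar0}}$ replaced by their images). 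Relation \eqref{eqn:tU} was already used to force $u=X^{\mu_1}=x^{N\mu_1}$ and $v=g(x_{01},x_{12})$; put $\mu:=N\mu_1$. It remains to turn \eqref{eqn:tMP} and \eqref{eqn:tO} into the asserted identities; relation \eqref{eqn:tRP} can be treated by the same device --- it produces a compatibility between $u$ and $v$ which, once these have the above form, is automatically satisfied, so we do not record it here.

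For \eqref{eqn:tMP}, I would apply the functor $G$ to both sides of $\Psi^{01_{\bar0},2_{\bar0},3_{\bar0}}\Psi^{0,1_{\bar0},2_{\bar0}3_{\bar0}}=\Psi^{0,1_{\bar0},2_{\bar0}}\Psi^{0,1_{\bar0}2_{\bar0},3_{\bar0}}\Phi^{1_{\bar0},2_{\bar0},3_{\bar0}}$ in $\widehat{\PaB}^\Gamma(\KK)$ at arity $3$. Every superscripted arrow occurring here is obtained from a generator ($E^{0,1_{\bar0}}$, $\Psi^{0,1_{\bar0},2_{\bar0}}$, or $R^{1,2}$, $\Phi^{1,2,3}$ coming from the pointing $\PaB\to\PaB^\Gamma$) through the moperad structure maps $\circ_0$, $\circ_i$, so by functoriality its $G$-image is the corresponding arrow multiplied by the image of the relevant coefficient under those same structure maps --- that is, by $g$ (resp.~$f$) evaluated at the pure braids obtained from $x_{01},x_{12}$ (resp.~$x_{12},x_{23}$) by the dictated relabellings and cablings. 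Pushing $G$ through the two composites moreover introduces conjugations by the intermediate arrows, which act on the automorphism group $\widehat{\on{PB}}_3^1(\kk)$ of the object $((01)2)3$ by the standard change-of-parenthesization isomorphisms and are absorbed into the same relabellings. Cancelling the un-coefficiented tails by means of the original relation \eqref{eqn:tMP} then leaves an equality of coefficients in $\widehat{\on{PB}}_3^1(\kk)$ which, after unwinding, is exactly \eqref{eqn:GT:(MP)}.

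For \eqref{eqn:tO} the method is identical, now in arity $2$ and with the $\Gamma$-action entering. Applying $G$ to $E^{01_{\bar0},2_{\bar0}}=\Psi^{0,1_{\bar0},2_{\bar0}}R^{1_{\bar0},2_{\bar0}}(\Psi^{0,2_{\bar0},1_{\bar0}})^{-1}E^{0,2_{\bar0}}\Psi^{0,2_{\bar1},1_{\bar0}}R^{2_{\bar1},1_{\bar0}}(\Psi^{0,1_{\bar0},2_{\bar1}})^{-1}$, one feeds in $G(R^{1,2})=x_{12}^{(\lambda-1)/2}R^{1,2}$, the appropriate relabellings and cablings of $u=x^{N\mu_1}$ and of $v=g(x_{01},x_{12})$ for the un-shifted arrows, and, for the $\Gamma$-shifted arrows such as $\Psi^{0,2_{\bar1},1_{\bar0}}$, the $\Gamma$-equivariance of $G$, namely $G(\underline{\alpha}\cdot\xi)=\underline{\alpha}\cdot G(\xi)$, which replaces the coefficient $g(\dots)$ of such an arrow by its translate $\underline{\alpha}\cdot g(\dots)$; this is precisely the source of the term $\alpha\cdot(\cdots)$, with $\alpha=\bar1$, in \eqref{eqn:GT:(tO)}. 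Cancelling the tails via the original relation \eqref{eqn:tO} and rewriting the resulting identity of coefficients inside $\widehat{\on{PB}}_2^1(\kk)$ --- whose relevant subquotient is $\widehat{\on{F}}_2(\kk)$ with generators $x=x_{01}$, $y=x_{12}$ and $z$ determined by $xyz=1$ --- produces \eqref{eqn:GT:(tO)}.

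The step I expect to be the genuine obstacle is the bookkeeping rather than anything conceptual: one has to fix a precise dictionary identifying each superscripted arrow $R^{\bullet}$, $\Psi^{\bullet}$, $\Phi^{\bullet}$, $E^{\bullet}$ with an explicit element of (or automorphism of an object in) $\widehat{\on{PB}}_3^1(\kk)$ and $\widehat{\on{PB}}_2^\Gamma(\kk)$, compute exactly how the moperad structure maps act on the generators $x_{ij}$ --- in particular the cabling at the frozen strand that turns a factor $g(x_{01},x_{12})$ into something like $g(x_{01}x_{02},x_{23})$ --- and keep track of the conjugations coming both from composing and from the $\Gamma$-action on the shifted arrows. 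Once this dictionary is in place, \eqref{eqn:GT:(MP)} and \eqref{eqn:GT:(tO)} fall out by direct substitution and cancellation.
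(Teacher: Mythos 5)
Your overall strategy is exactly the one the paper follows: apply $G$ to the defining relations of $\widehat{\PaB}^{\Gamma}(\KK)$ from Theorem \ref{thm-gamma-pres} and read off an identity among the coefficients $u$, $v$, $f$, $\lambda$. For \eqref{eqn:GT:(MP)} this is indeed essentially immediate (the paper dismisses it as straightforward), and your account of where the $\alpha\cdot(\cdots)$ term in \eqref{eqn:GT:(tO)} comes from --- $\Gamma$-equivariance of $G$ applied to the shifted arrows $\Psi^{0,2_{\bar1},1_{\bar0}}$ and $R^{2_{\bar1},1_{\bar0}}$ --- is correct.

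For the octogon, however, the claim that the identity ``falls out by direct substitution and cancellation'' hides a genuine gap: the coefficients do not commute with the arrows, so pulling them to the left of the word requires conjugating each one past all the arrows preceding it, and these conjugations are not mere relabellings. Concretely, the paper's computation (carried out for $N=1$ and then lifted) uses $\Psi^{0,1,2}R^{1,2}(\Psi^{0,1,2})^{-1}=\sigma_1$ together with $\sigma_1 x_{01}\sigma_1^{-1}=x_{02}$ and $\sigma_1 x_{12}\sigma_1^{-1}=x_{12}$; in the course of this the factor $x_{12}^{(\lambda-1)/2}$ coming from $F(R^{1,2})$ is turned into $\sigma_1 x_{12}^{(\lambda-1)/2}\sigma_1=x_{12}^{(\lambda+1)/2}$, because $\sigma_1^2=x_{12}$. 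This is precisely the origin of the asymmetric exponents $\frac{\lambda+1}{2}$ versus $\frac{\lambda-1}{2}$ in \eqref{eqn:GT:(tO)}; naive substitution would put $\frac{\lambda-1}{2}$ in both places and hence yield a different identity. A second omitted step is the passage from the resulting relation in $\widehat{\on{PB}}_2^1(\kk)$ to one in $\hat{\on{F}}_2$ subject to $xyz=1$: one must write $x_{01}=zx_{02}^{-1}x_{12}^{-1}$ with $z$ central, absorb and cancel the central element, and only then perform the change of variables (the paper's proof takes $x=x_{02}$, $y=x_{12}$, $\tilde z=y^{-1}x^{-1}$, and finally chooses lifts along $X=x^N$, $y(a)=x^{-a}yx^{a}$ for general $N$). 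Both steps must be made explicit for your argument to produce \eqref{eqn:GT:(tO)} as stated rather than a superficially similar but incorrect identity.
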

\begin{proof}
First of all, the fact that relation \eqref{eqn:GT:(MP)} is satisfied is straightforward. 
Second of all, suppose $N=1$ and consider the image of \eqref{eqn:O} by $G$:
\begin{eqnarray}\label{GtO}
G(E^{01,2}) & = & g(x_{01},x_{12}) \Psi^{0,1,2} 
x_{12}^{{\lambda-1}\over 2} R^{1,2} (\Psi^{0,2,1})^{-1}
g^{-1}(x_{02},x_{12})G(E^{0,2}) \\
\nonumber & & g(x_{02},x_{12}) \Psi^{0,2,1}x_{12}^{{\lambda-1}\over 2} 
R^{2,1} (\Psi^{0,1,2})^{-1}  g^{-1}(x_{01},x_{12}).
\end{eqnarray}
Now, by using $x_{21}=x_{12}$, $\sigma_1 x_{01} \sigma^{-1}_1=x_{02}$ and 
$\sigma_1 x_{12} \sigma^{-1}_1=x_{12}$, we get 
\begin{eqnarray*}
& g(x_{01},x_{12})\Psi^{0,1,2} x_{12}^{{\lambda-1}\over 2} 
R^{1,2} (\Psi^{0,2,1})^{-1}  g^{-1}(x_{02},x_{12}) \\
= & \sigma^{-1}_1 g(x_{02},x_{12})\sigma_1 x_{12}^{{\lambda-1}\over 2}  
\sigma_1 g^{-1}(x_{01},x_{12}) \sigma^{-1}_1 \Psi^{0,1,2} R^{1,2} (\Psi^{0,2,1})^{-1} \\
= & \sigma^{-1}_1 g(x_{02},x_{12}) x_{12}^{{\lambda+1}\over 2}   
g^{-1}(x_{01},x_{12}) \sigma^{-1}_1 \Psi^{0,1,2} R^{1,2} (\Psi^{0,2,1})^{-1}.
\end{eqnarray*}
Plugging this into equation \eqref{GtO} we obtain
\begin{eqnarray}\label{GtO2}
G(E^{01,2}) & = & \sigma^{-1}_1 g(x_{02},x_{12}) x_{12}^{{\lambda+1}\over 2}   
g^{-1}(x_{01},x_{12}) \sigma^{-1}_1 \Psi^{0,1,2} R^{1,2} 
(\Psi^{0,2,1})^{-1}G(E^{0,2}) \\
\nonumber & &\Psi^{0,2,1} R^{2,1}  (\Psi^{0,1,2})^{-1} \sigma^{-1}_1 g(x_{02},x_{12}) x_{12}^{{\lambda+1}\over 2}  
 g^{-1}(x_{01},x_{12}) \sigma^{-1}_1 .
\end{eqnarray}
Now, since $\Psi^{0,1,2} R^{1,2} (\Psi^{0,1,2})^{-1}$ is nothing but $\sigma_1$, we obtain 
\begin{eqnarray*}\label{GtO3}
G(E^{01,2}) & = & \sigma^{-1}_1 g(x_{02},x_{12}) x_{12}^{{\lambda+1}\over 2}   
g^{-1}(x_{01},x_{12}) \sigma^{-1}_1\cdot G(E^{0,2})   
g(x_{02},x_{12}) x_{12}^{{\lambda+1}\over 2}   g^{-1}(x_{01},x_{12}).
\end{eqnarray*}
Next, $G(E^{0,1})=x_{01}$ so, by using relation 
$\sigma_1^{-1} x_{02} = x_{01} \sigma_1^{-1}$, we obtain
\begin{eqnarray*}\label{GtO4}
G(E^{01,2}) & = & \sigma^{-1}_1 g(x_{02},x_{12}) 
x_{12}^{{\lambda+1}\over 2}   g^{-1}(x_{01},x_{12})  x_{01}^{\mu} 
 \sigma^{-1}_1 g(x_{02},x_{12}) x_{12}^{{\lambda+1}\over 2}   g^{-1}(x_{01},x_{12})\\
 & = & \sigma^{-1}_1 g(x_{02},x_{12}) 
 x_{12}^{{\lambda+1}\over 2}   g^{-1}(x_{01},x_{12}) x_{01}^{\mu}   
 g(x_{01},x_{12}) \sigma^{-1}_1 x_{12}^{{\lambda+1}\over 2}   
 \sigma^{-1}_1 g^{-1}(x_{02},x_{12})  \\
 & = & \sigma^{-1}_1 g(x_{02},x_{12}) x_{12}^{{\lambda+1}\over 2}   
 g^{-1}(x_{01},x_{12}) x_{01}^{\mu}   g(x_{01},x_{12}) 
 x_{12}^{{\lambda-1}\over 2} g^{-1}(x_{02},x_{12}) \sigma_1.
\end{eqnarray*}
The above equation is then equivalent to 
\begin{eqnarray*}\label{GtO5}
g(x_{02},x_{12}) x_{12}^{{\lambda+1}\over 2}   
g^{-1}(x_{01},x_{12}) x_{01}^{\mu}   g(x_{01},x_{12}) x_{12}^{{\lambda-1}\over 2} 
g^{-1}(x_{02},x_{12}) & = & \sigma_1 G(E^{01,2}) \sigma^{-1}_1  \\
 & = & \sigma_1 (zx_{01}^{-1})^{\mu} \sigma^{-1}_1 \\
& = & (zx_{02}^{-1})^{\mu}.
\end{eqnarray*}
Finally, by writing $x_{01}=zx_{02}^{-1}x_{12}^{-1}$,
we obtain, by absorbing the central element $z$ and simplifying 
it from the equation, the following result:
\begin{eqnarray*}\label{GtO6}
1& =&x_{02}^{\mu}g(x_{02},x_{12}) x_{12}^{{\lambda+1}\over 2}   
g^{-1}(x_{01},x_{12}) x_{02}^{-\mu}x_{12}^{-\mu} g(x_{01},
x_{12}) x_{12}^{{\lambda-1}\over 2} g^{-1}(x_{02},x_{12}).
\end{eqnarray*}
By denoting $x=x_{02}$, $y=x_{12}$ and $\tilde{z}=y^{-1}x^{-1} $  we obtain
$$
x^{\mu}g(x,y)y^{{{\lambda+1}\over 2}}g(\tilde{z},y)^{-1}\tilde{z}^{\mu}g(\tilde{z},y)y^{{{\lambda-1}\over 2}} g(x,y)^{-1}
=1.
$$
Finally, when $N\geq 1$, one takes the chosen lifts of each 
term of the above equation to obtain equation \eqref{eqn:GT:(tO)}.
\end{proof}

\begin{lemma}
We have $\lambda=1+\mu_1N$. 
\end{lemma}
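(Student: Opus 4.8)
\textit{Plan.} The identity $\lambda=1+\mu_1N$ is a shadow of the twisted octogon relation \eqref{eqn:tO} (equivalently, of the already-derived equation \eqref{eqn:GT:(tO)}): I would apply the morphism $(F,G)$ to \eqref{eqn:tO} and then push the resulting relation into an abelian quotient of the relevant completed pure braid group, in which all the contributions of $f$, $g$ and of the parenthesization/$\Psi$-factors disappear, leaving only a linear identity between $\lambda$ and $\mu_1$. Concretely it suffices to record, for each generator, its total winding class, i.e.\ its coefficients on the classes $\bar x_{01},\bar x_{02},\bar x_{12}$ in the abelianization (with the braidings $R^{i,j}$ contributing a half-class, $R^{1,2}R^{2,1}=x_{12}$).

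From the pictures one reads that $E^{0,i_{\bar0}}$ has winding class $\bar x_{0i}$ (it is the lift of the loop of strand $i$ around the hole $=$ strand $0$), hence $E^{01_{\bar0},2_{\bar0}}$, obtained by substituting a two–strand identity into the frozen slot so that strand $0$ is replaced by the block $\{0,1\}$, has class $\bar x_{02}+\bar x_{12}$. On the other side, $G(\Psi^{0,1_{\bar0},2_{\bar0}})=g\,\Psi^{0,1_{\bar0},2_{\bar0}}$, $G(E^{0,i_{\bar0}})=u\,E^{0,i_{\bar0}}$ with $u$ of winding class $N\mu_1\,\bar x_{0i}$ (so $G(E^{01_{\bar0},2_{\bar0}})$ has class $(N\mu_1+1)(\bar x_{02}+\bar x_{12})$ by substitution), and $F(R^{1,2})=x_{12}^{(\lambda-1)/2}R^{1,2}$, $F(R^{2,1})=x_{12}^{(\lambda-1)/2}R^{2,1}$. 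In the $G$–image of \eqref{eqn:tO} every factor $G(\Psi)$ appears together with its inverse (and the $g$'s in \eqref{eqn:GT:(tO)} together with their inverses, up to conjugation and up to the $\Gamma$–action of $\alpha=\bar1$, all trivial on the abelianization), so they cancel pairwise. Comparing the coefficient of $\bar x_{12}$ on the two sides then gives
\[
N\mu_1+1 \;=\; \Bigl(\tfrac{\lambda-1}{2}+\tfrac12\Bigr)+\Bigl(\tfrac{\lambda-1}{2}+\tfrac12\Bigr) \;=\; \lambda ,
\]
which is exactly $\lambda=1+\mu_1N$. (Equivalently, abelianizing \eqref{eqn:GT:(tO)} verbatim: with $z=y^{-1}x^{-1}$ the $g$–terms cancel, the $\bar x$–coefficients match automatically, and the $\bar y$–coefficients yield $\tfrac{\lambda+1}{2}+\tfrac{\lambda-1}{2}$ equal to the exponent of $x^{\mu}$, which records the total winding $N\mu_1+1$ of $G(E^{0,i_{\bar0}})$.)

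The only delicate point — and the one I would treat most carefully — is this winding bookkeeping. One must check that $G(E^{0,i_{\bar0}})=u\,E^{0,i_{\bar0}}$ contributes total winding $N\mu_1+1$ and not $N\mu_1$, the extra unit coming from the underlying generator $E^{0,i_{\bar0}}$ itself; and, symmetrically, that the two braiding factors in \eqref{eqn:tO} contribute total winding $\lambda$, the $\lambda-1$ coming from the two twists $x_{12}^{(\lambda-1)/2}$ and the remaining unit from $R^{1,2}R^{2,1}=x_{12}$. Once both of these are pinned down the identity $\lambda=1+\mu_1N$ is immediate, and neither the mixed pentagon \eqref{eqn:GT:(MP)} nor the full non-abelian content of \eqref{eqn:tO} is needed.
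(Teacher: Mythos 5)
Your argument is correct, but it is genuinely different from the paper's: the paper disposes of this lemma by citing Enriquez \cite{En}, where it is shown that any quadruple $(\lambda,f,\mu,g)$ satisfying \eqref{eqn:GT:(MP)} and \eqref{eqn:GT:(tO)} automatically has $\lambda=[\mu]=1+\mu_1N$, whereas you give a direct, self-contained derivation by abelianizing the twisted octogon. Your bookkeeping checks out: the four $g$-factors in \eqref{eqn:GT:(tO)} occur in inverse pairs (and the $\Gamma$-action $\alpha\cdot(-)$ is trivial on $H_1$, since it permutes the $y(a)=x^{-a}yx^a$, which all have class $\bar y$), so they cancel; with $\bar z=-\bar x-\bar y$ the remaining classes give $\mu\bar x+\lambda\bar y+\mu\bar z=(\lambda-\mu)\bar y=0$, and $\mu$ is indeed the total $x_{02}$-degree $N\mu_1+1$ of $G(E^{0,2_{\bar0}})=x_{02}^{N\mu_1}E^{0,2_{\bar0}}$. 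The one point you should make explicit is that a ``winding class'' is canonically defined only for loops, while the arrows in \eqref{eqn:tO} join distinct objects of $\widehat{\PaB}^\Gamma(\kk)(2)$; this is harmless either because both sides of the relation share source and target (so only the class of their ratio, a genuine loop, matters) or because the ambiguity in splitting $\bar x_{12}$ between $R^{1,2}$ and $R^{2,1}$ cancels, as your formula $(\tfrac{\lambda-1}{2}+\tfrac12)+(\tfrac{\lambda-1}{2}+\tfrac12)=\lambda$ implicitly records. What your approach buys is independence from the external reference and the observation that only the abelianized octogon is needed (the mixed pentagon \eqref{eqn:GT:(MP)} plays no role here); what it costs is that you must set up the $H_1$-valued functor and the moperadic compatibility $(x_{01})^{01,2}\mapsto\bar x_{02}+\bar x_{12}$ by hand, which the citation to \cite{En} sidesteps.
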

\begin{proof}
It is proven in \cite{En} that, if we have a quadruple 
$(\lambda,\mu,f,g)$, with $(\lambda,f)\in\widehat{\on{GT}}(\KK)$, 
$\mu=(a,\mu_1)\in\Gamma\times\kk$, and $g\in \widehat{\ker\phi_N}(\KK)$, 
satisfying the above two equations 
\eqref{eqn:GT:(MP)} and \eqref{eqn:GT:(tO)}, then 
$\lambda=[\mu]:=\tilde{a}+\mu_1N$, where $0\leq\tilde{a}\leq N-1$ 
is a representative of $a\in\Gamma$. In our case, we 
are in the situation where $\mu=(\bar1,\mu_1)$. 
\end{proof}
As a consequence, we can identify the underlying set of our 
operadicly defined cyclotomic 
Grothendieck-Teichm\"uller group $\widehat{\GT}^{\Gamma}(\KK)$ 
with the underlying set 
of the group $\on{GTM}_{\bar1}(N,\kk)$ introduced in \cite{En}. 

\medskip

Indeed, \gls{bkGTg} is defined as the set of triples $(\lambda,f,g)$ with 
$(\lambda,f)\in\widehat{\on{GT}}(\KK)$ and $g\in \widehat{\ker\phi_N}(\KK)$, 
and satisying equations 
\eqref{eqn:GT:(MP)} and \eqref{eqn:GT:(tO)} with $\mu=\frac{\lambda-1}{N}$. 
It carries a group structure, which is defined by  
\[
(\lambda_1,f_1,g_1)*(\lambda_2,f_2,g_2)=(\lambda,f,g)
\]
with 
\begin{itemize}
\item $\lambda=\lambda_1\lambda_2$,
\item $f(x,y) = f_1(f_2(x, y)^{-1}x^{\lambda_2}f_2(x, y),y^{\lambda_2} )
 \cdot f_2(x, y) ,$
\item $g(x,y) = g_1(g_2(x, y)^{-1}x^{\mu_2}g_2(x, y),y^{\lambda_2} )
 \cdot g_2(x, y)$.
\end{itemize}

Therefore it follows from the above discussion that we get an assignement  
\begin{equation}\label{GT-vers-GT}
\widehat{\GT}^{\Gamma}(\KK)\longrightarrow 
\on{GTM}_{\bar1}(N,\kk)\,;\,(F,G)\longmapsto (\lambda,f,g)\,.
\end{equation}
It is obviously injective. 
\begin{proposition}\label{Cyc:GT}
The injective map \eqref{GT-vers-GT} is a group morphism.
\end{proposition}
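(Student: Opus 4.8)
The plan is to compute how the coordinates $(\lambda,f,g)$ transform under composition in $\widehat{\GT}^{\Gamma}(\KK)$, and to identify the outcome with the multiplication law of $\on{GTM}_{\bar1}(N,\kk)$. Fix two elements $(F_1,G_1),(F_2,G_2)\in\widehat{\GT}^{\Gamma}(\KK)$, with respective coordinates $(\lambda_i,f_i,g_i)$; set $\mu_i=\tfrac{\lambda_i-1}{N}$ and write $(F,G):=(F_1,G_1)\circ(F_2,G_2)$, with coordinates $(\lambda,f,g)$.

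The $(\lambda,f)$-component costs nothing. Indeed the assignment $(F,G)\mapsto(\lambda,f)$ factors through the group morphism $\widehat{\GT}^{\Gamma}(\KK)\to\widehat{\GT}(\KK)$, $(F,G)\mapsto F$ (it is a group morphism because composing pairs of (m)operad automorphisms composes the underlying operad automorphisms), followed by the isomorphism $\widehat{\GT}(\KK)\xrightarrow{\ \sim\ }\on{GT}(\kk)$ recalled in \S\ref{subsec-GT} (a group morphism by \cite{Fresse}). Hence $\lambda=\lambda_1\lambda_2$ and $f$ is given by the $\on{GT}(\kk)$-law, and by the Lemma above the scalar $\mu$ is then automatically the right one. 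So only the law for $g$ remains to be verified.

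To extract it, I would apply $G=G_1G_2$ to the generator $\Psi:=\Psi^{0,1_{\bar0},2_{\bar0}}$. Using $G_2(\Psi)=g_2(x_{01},x_{12})\,\Psi$ and functoriality of the moperad morphism $G_1$,
$$
G(\Psi)=G_1\!\bigl(g_2(x_{01},x_{12})\bigr)\,G_1(\Psi)=g_2\!\bigl(G_1(x_{01}),G_1(x_{12})\bigr)\,g_1(x_{01},x_{12})\,\Psi\,,
$$
so that $g(x_{01},x_{12})=g_2\!\bigl(G_1(x_{01}),G_1(x_{12})\bigr)\cdot g_1(x_{01},x_{12})$ inside $\widehat{\on{PB}}_2^{1}(\kk)$. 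Everything then reduces to computing $G_1$ on the two topological generators $x_{01},x_{12}$. The generator $x_{12}$ comes from the operad along $\widehat{\PaB}(\KK)\to\widehat{\PaB}^{\Gamma}(\KK)$, so $G_1(x_{12})$ is read off from the classical $\on{GT}$-action of $F_1$ together with the arrow $\Psi$; the generator $x_{01}$ is produced from $E^{0,1_{\bar0}}$ via the moperad operation $\circ_0$ (concretely $x_{01}^{N}=\on{id}_{(01_{\bar0})}\circ_0(E^{0,1_{\bar0}})^{(N)}$), so $G_1(x_{01})$ is determined by $G_1(E^{0,1_{\bar0}})=X^{\mu_1}E^{0,1_{\bar0}}$ and the Lemma $\lambda_1=1+N\mu_1$. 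Feeding these values into the identity above, keeping track of the conjugation produced by the parenthesization change $\Psi$ and absorbing the central generator of $\ker\phi_N$ (exactly as in the computations of Proposition~\ref{prop-MPO}), yields the claimed formula for $g$; running the same computation on $E^{0,1_{\bar0}}$ is consistent and merely re-derives $\mu=\tfrac{\lambda-1}{N}$.

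The main obstacle is precisely this final bookkeeping: one must simultaneously track base-points and labels under the $\Gamma^{n}$-action, the two distinct embeddings $\on{F}_2\hookrightarrow\on{PB}_3$ (the ``$\PaB(3)$'' one, $x\mapsto x_{12}$, $y\mapsto x_{23}$, governing $f$, and the ``$\on{PB}_2^{1}$'' one, $x\mapsto x_{01}$, $y\mapsto x_{12}$, governing $g$), and the precise matching between the operadic composition order and Enriquez's multiplication law (the two may differ by an opposite-group convention); relations \eqref{eqn:tRP} and \eqref{eqn:tO}, already used in Proposition~\ref{prop-MPO}, are what allow the outcome to be rewritten in Enriquez's variables. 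Once the formula is verified, \eqref{GT-vers-GT} is a group morphism, as claimed.
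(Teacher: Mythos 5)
Your proposal is correct and follows essentially the same route as the paper: the $(\lambda,f)$-component is delegated to the operadic $\widehat{\GT}$-isomorphism, and the $g$-component is extracted by applying the composite to $\Psi^{0,1_{\bar0},2_{\bar0}}$ and using functoriality together with the values of $G_1$ on the generators. The only caveat is that the ``bookkeeping'' you defer is in fact the substantive step the paper carries out: since $g_2$ lives in $\widehat{\ker\phi_N}(\KK)\simeq\hat{\on{F}}_{N+1}(\KK)$, the expression $g_2\bigl(G_1(x_{01}),G_1(x_{12})\bigr)$ must be read as $g_2\bigl(G_1(X)\,|\,G_1(y(0)),\dots,G_1(y(N-1))\bigr)$, and computing each $G_1(y(a))$ via the decomposition $y(a)=x_{01}^{-a}x_{12}x_{01}^{a}$ and the $\Gamma$-equivariance identity $x_{01}^{a}\,g(X|y(0),\dots,y(N-1))=g(X|y(a),\dots,y(a+N-1))\,x_{01}^{a}$ is exactly what produces the index shifts in Enriquez's multiplication law.
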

We will see later in Theorem \ref{thm-last-torsor} that \eqref{GT-vers-GT} is actually an isomorphism. 
\begin{proof}
We have to prove that the assignment $(F,G)\to (\lambda,f,g)$ constructed above is a group morphism. 
As we already know, the composition of automorphisms $F_1$ and $F_2$ in
$\on{Aut}_{\on{Op}\mathbf{Grpd}_{\kk}}^+(\widehat{\PaB}(\KK))$
corresponds to the composition law in $\widehat{\on{GT}}(\KK)$, that is, the associated 
couples $(\lambda_1,f_1)$ and $(\lambda_2,f_2)$ in $\kk^\times \times \hat{\on{F}}_2(\kk)$ satisfy 
$$
(F_2\circ F_1)(R^{1,2})=(R^{1,2})^{\lambda_1\lambda_2}\qquad\textrm{and}
$$
$$
(F_2\circ F_1)(\Phi^{1,2,3})=f_1(x^{\lambda_2},f_2(x, y)y^{\lambda_2} f_2(x, y)^{-1}) f_2(x, y) \Phi^{1,2,3}
$$
(here $\on{F}_2$ is generated by $x:=x_{12}$ and $y:=x_{23}$). We also already showed 
that any two automorphisms $G_1$ and $G_2$ in the group $\widehat{\GT}^\Gamma(\kk)$, 
depending on $F_1$ and $F_2$ respectively, are associated to couples 
$(\mu_1,g_1(X | y(0),\ldots, y(N-1)))$ and $(\mu_2,g_2(X | y(0),\ldots, y(N-1)))$ where 
$g_1$ and $g_2$ are elements of in $\hat{\on{F}}_{N+1}(\KK)$. 

In the group $\mathbf{A}:=\on{Aut}_{\widehat{\PaB}^\Gamma(\kk)(2)}((01_{\bar0})2_{\bar0})$, we have 
$$
X=x_{01}^N=(E^{0,1_{\bar0}})^{(N)}
$$
and $G_1(X)=X^{\lambda_1}$ for $\lambda_1\in\kk^\times$.
Next, we want to compute $G_1(y(0))$, where $y(0)=x_{12}$ decomposes in $\mathbf{A}$ as follows:
$$
\xymatrix{
(01_{\bar0})2_{\bar0} \ar[rr]^{\Psi^{0,1_{\bar0},2_{\bar0}}} && 0(1_{\bar0}2_{\bar0}) \ar[rr]^{R^{1_{\bar0},2_{\bar0}}
R^{2_{\bar0},1_{\bar0}}} && 0(1_{\bar0}2_{\bar0}) \ar[rr]^{(\Psi^{0,1_{\bar0},2_{\bar0}})^{-1}} && (01_{\bar0})2_{\bar0}.
}
$$
Then, as 
$$
G_1(\Psi^{0,1_{\bar0},2_{\bar0}})= g_1(X|y(0),\ldots,y(N-1))\Psi^{0,1_{\bar0},2_{\bar0}}
$$
and 
$$
G_1(R^{1_{\bar0}2_{\bar0}}R^{2_{\bar0}1_{\bar0}})= (x_{12})^{\lambda_1},
$$
we obtain
$$
G_1(y(0))=g_1\big(X|y(0),\ldots,y(N-1)\big) y(0)^{\lambda_1} g_1^{-1}\big(X|y(0),\ldots,y(N-1)\big)\,.
$$
More generally, $y(a)=x_{01}^{-a} x_{12}x^{a}_{01}$ decomposes as 
$$
\xymatrix{
(01_{\bar0})2_{\bar0} \ar[r]^{(E^{0,1_{\bar0}})^{(a)}} & 
(01_{\bar{a}})2_{\bar0} \ar[r]^{\Psi^{0,1_{\bar{a}},2}} & 
0(1_{\bar{a}} 2_{\bar0}) \ar[rr]^{R^{1_{\bar{a}},2_{\bar0}}R^{2_{\bar0},1_{\bar{a}}}} & &
0(1_{\bar{a}} 2_{\bar0}) \ar[r]^{(\Psi^{0,1_{\bar{a}},2_{\bar0}})^{-1}} &  
(01_{\bar{a}})2_{\bar0} \ar[r]^{(E^{0,1_{\bar0}})^{(-a)}} & 
(01_{\bar0})2_{\bar0}\,.
}
$$
Therefore, by $\Gamma$-equivariance we get 
$$
x^{a}_{01}g\big(X | y(0), \ldots, y( N-1))\big)=
g\big(X | y(a), \ldots, y(a + N-1))\big)x^{a}_{01}\,,
$$
and thus 
\begin{eqnarray*} 
G_1(y(a))&=& G_1\big((E^{0,1_{\bar0}})^{(a)}\Psi^{0,1_{\bar{a}},2_{\bar0}}
R^{1_{\bar{a}},2_{\bar0}}R^{2_{\bar0},1_{\bar{a}}}(\Psi^{0,1_{\bar{a}},2_{\bar0}})^{-1}(E^{0,1_{\bar0}})^{(-a)}\big) \\
& = & G_1\big((E^{0,1_{\bar0}})^{(a)}\big)G_1(\Psi^{0,1_{\bar{a}},2_{\bar0}})G_1(R^{1_{\bar{a}},2_{\bar0}}
R^{2_{\bar0},1_{\bar{a}}})G_1\big(\Psi^{0,1_{\bar{a}},2_{\bar0}})^{-1}\big)G_1\big((E^{0,1_{\bar0}})^{(-a)}\big) \\
& = & \on{Ad}\Big(\big(X^{N\mu_1}E^{0,1_{\bar0}}\big)^{(a)} g\big(X | y(0), \ldots, y( N-1)\big) \Big) (x_{12}^{\lambda_1}) \\
& = & \on{Ad}\Big(X^{aN\mu_1}(E^{0,1_{\bar0}})^{(a)} g\big(X | y(0), \ldots, y( N-1)\big) \Big) (x_{12}^{\lambda_1}) \\
& = & \on{Ad} \Big( X^{ak_1} g\big(X | y(a), \ldots, y(a + N-1)\big) \Big) (y(a)^{\lambda_1})\,.
\end{eqnarray*}

Finally, we obtain
\begin{eqnarray*} 
(G_2 \circ G_1)(\Psi^{0,1_{\bar0},2_{\bar0}})
& = & G_2(g_1(X|y(0),\ldots,y(N-1))\Psi^{0,1_{\bar0},2_{\bar0}})\\
& = & g_1(G_2(X)|G_2(y(0)),\ldots,G_2(y(N-1)))g_2(X|y(0),\ldots,y(N-1)\Psi^{0,1_{\bar0},2_{\bar0}}\\
& = & g_1\Big( X^{\lambda_2} | \Ad(g_2(X | y(0),\ldots,y(N-1)))(y(0)^{\lambda_2}) , \\
&   & \Ad \big( X^{\lambda_2} g_2(X | y(1), \ldots, y( N))\big) (y(1)^{\lambda_2}), \ldots, \\ 
&   & \Ad \big( X^{(N-1)k_2} g_2(X | y(N-1), \ldots, y(2N-2)) \big) (y(N-1)^{\lambda_2}) \Big) \\
&   & g_2(X|y(0),\ldots,y(N-1))\Psi^{0,1_{\bar0},2_{\bar0}}\,. 
\end{eqnarray*}
which is nothing but the composition law in the group $\on{GTM}_{\bar1}(N,\kk)$. 
\end{proof}

\section{The moperad of $N$-chord diagrams, and cyclotomic associators}
\label{Assocyclo}


\subsection{Infinitesimal cyclotomic braids}

Let $\Gamma=\Z/N\Z$, $I$ a finite set, and let $\t_I^{\Gamma}(\kk)$ 
be the Lie $\kk$-algebra with generators 
$t_{0i}$, ($i \in I$), and $t^{\alpha}_{ij}$, 
($i \neq j\in I$, $\alpha \in \Z/N\Z$), and relations: 
\begin{flalign}
& t^{\alpha}_{ij} = t^{-\alpha}_{ji}\,, 				\tag{tS}\label{eqn:tS} \\
& [t_{0i},t^{\alpha}_{jk}]=0\textrm{ and }[t^{\alpha}_{ij},t^{\beta}_{kl}]=0\,,		\tag{tL}\label{eqn:tL} \\
& [t^{\alpha}_{ij},t^{\alpha+\beta}_{ik}+t^{\beta}_{jk}] =0\,, 					\tag{t4T}\label{eqn:t4T} \\
& [t_{0i},t_{0j}+\sum_{\alpha \in\Z/N\Z}t^{\alpha}_{ij}]=0\,,						\tag{t4$\mathbb{T}$}\label{eqn:t4T0} \\
& [t_{0i}+t_{0j}+\sum_{\beta \in \Z/N\Z}t^{\beta}_{ij},t^{\alpha}_{ij}]=0\,, & 		\tag{t6$\mathbb{T}$}\label{eqn:t6T0}
\end{flalign}
where $i,j,k,l\in I$ are pairwise distinct and $\alpha, \beta \in\Z/N\Z$. 
We will call it the $\kk$-Lie algebra of \textit{infinitesimal cyclotomic braids}. 
This definition is obviously functorial with respect to bijections, exhibiting 
$\t^{\Gamma}(\kk):=\{\t_I^{\Gamma}(\kk)\}_I$ as an $\mathfrak{S}$-module. 
It moreover also has the structure of a $\t(\kk)$-moperad, where 
partial compositions are defined as follows\footnote{We just re-package 
Enriquez's insertion-coproduct morphisms \cite[\S2.1.1]{En} in a moperadic manner. }: for $i\in I$, 
\[
\begin{array}{cccc}
\circ_i: & \t^{\Gamma}_I(\kk) \oplus \t_J(\kk)  & \longrightarrow &  
\t^{\Gamma}_{J\sqcup I-\{i\}}(\kk) \\
	 & (0,t_{p q}) 				& \longmapsto 	  &   t_{pq}^0 \\
	 & (t^{\alpha}_{jk},0) 			& \longmapsto 	  & 
\begin{cases}
  \begin{tabular}{ccc}
  $t^{\alpha}_{jk}$ 			  & if & $ i\notin\{j,k\} $ \\
  $\sum\limits_{r\in J} t^{\alpha}_{r k}$ & if & $j=i$ \\
  $\sum\limits_{r\in J} t^{\alpha}_{jr}$  & if & $k=i$ 
  \end{tabular}
  \end{cases}\\
	 & (t_{0j},0) 				& \longmapsto & 
\begin{cases}
  \begin{tabular}{ccc}
  $t_{0j}$ 				  & if & $ j\neq i $ \\
  $\sum\limits_{p\in J} t_{0p}+\frac12\sum\limits_{\gamma\in\Gamma}\sum\limits_{\substack{q,r\in J \\ q\neq r}}t_{qr}^\gamma$ 	  & if & $j=i$ 
  \end{tabular}
  \end{cases}
\end{array}
\]
and 
\[
\begin{array}{cccc}
  \circ_0: 	& \t^{\Gamma}_I(\kk) \oplus \t^{\Gamma}_J(\kk)  & 
  \longrightarrow &  \t^{\Gamma}_{J\sqcup I}(\kk) \\
    		& (0,t_{0p}) 					& \longmapsto 	  &   t_{0p} \\
		& (0,t^\alpha_{p q})				& \longmapsto 	  &   t^{\alpha}_{p  q }\\
		& (t^{\alpha}_{jk},0) 				& \longmapsto 	  &   t^{\alpha}_{jk} \\
		& (t_{0i},0) 					& \longmapsto 	  &   t_{0i}+\sum\limits_{\gamma\in\Gamma}\sum\limits_{j\in J}t_{ji}^\gamma \\
  \end{array}
\]
We call $\t^{\Gamma}({\kk})$ the moperad of infinitesimal cyclotomic braids. It is acted on by $\Gamma$: 
for $\gamma\in\Gamma$ and $1\leq i\leq n$, $\gamma_i\in\Gamma^n$ acts as 
\begin{eqnarray*}
& \gamma_i\cdot t_{0p}=t_{0p} 							& (p\in\{1,\dots,n\})\,, 						\\
& \gamma_i\cdot t^\alpha_{qr}=t^\alpha_{qr}				& (\alpha\in\Gamma\textrm{ and }q,r\neq i)\,, 	\\
& \gamma_i\cdot t^\alpha_{ir}=t^{\alpha+\gamma}_{ir}	& (\alpha\in\Gamma\textrm{ and }r\neq i)_,, 	\\
& \gamma_i\cdot t^\alpha_{qi}=t^{\alpha-\gamma}_{qi}	& (\alpha\in\Gamma\textrm{ and }q\neq i)\,.
\end{eqnarray*}

\begin{remark}
When $\Gamma=1$, our moperad $\t^{1}({\kk})$ and the ``shifted'' moperad $\t^+$ from \cite{DHLARS} are different. More precisely the $\circ_i$ operation sends $(t_{0i},0)$ to 
\begin{itemize}
\item $\sum\limits_{p\in J} t_{0p}$ for $\t^+$. 
\item $\sum\limits_{p\in J} t_{0p}+\frac12\sum\limits_{\substack{q,r\in J \\ q\neq r}}t_{qr}$ for $\t^{1}({\kk})$. 
\end{itemize}
This is again an indirect consequence of different choices of framing, as explained in Remarks \ref{Lien avec Marcy and co} and \ref{Lien avec Marcy and co 2}. Both moperads of Lie algebras can be obtained by considering holonomy Lie algebras for the compactified configuration spaces of $\mathbb{C}^\times$, where the moperad structure depends on the choice of framing. 
\end{remark}


\subsection{Horizontal $N$-chord diagrams}

We now consider the $\CD(\KK)$-moperad $\CD_0^\Gamma(\KK) := \hat{\mathcal{U}}(\t^\Gamma({\kk}))$ 
in $\mathbf{Cat(CoAlg_\KK)}$. 
Morphisms in $\CD^\Gamma_0(\KK)(n)$ can be represented as linear combinations of diagrams of chords on 
$n+1$ vertical strands, together with a labelling (by elements of $\Gamma$) of chords that are not 
connected to the leftmost strand (i.e.~the frozen one). 
On can equivalently represent them as certain horizontal $N$-diagrams according to the terminology of 
\cite[Definition 6.4]{Adrien} (where the relation to Vassiliev invariants has been explored): more precisely, 
those horizontal $N$-diagrams for which the sum of labels on each strand is $\bar0$. 
Using the representation from \cite{Adrien}, i.e.~the one with labels on (non frozen) strands rather than on chords, 
the diagram corresponding to $t_{0i}$ is 
\begin{align*}
	\tik{\tzero[->]{0}{1}	\node[point, label=above:$0$] at (0,0) {};
										\node[point, label=above:$i$] at (1,0) {};
										\node[point, label=below:$0$] at (0,-1) {};
										\node[point, label=below:$i$] at (1,-1) {};}
&=\tik{\tzero[->]{0}{1}	\node[point, label=above:$0$] at (0,0) {};
										\node[point, label=above:$i$] at (1,0) {};
										\node[point, label=below:$0$] at (0,-1) {};
										\node[point, label=below:$i$] at (1,-1) {};
										\node[diam, label=right:$\alpha$] at (1,-0.35) {};
										\node[diam, label=right:$-\alpha$] at (1,-0.65) {};}
\end{align*}
while the one corresponding to $t_{ij}^\alpha=t_{ji}^{-\alpha}$ is 
\begin{align*}
	\tik{\hori[->]{0}{0}{1}	\node[point, label=above:$i$] at (0,0) {};
											\node[point, label=above:$j$] at (1,0) {};
											\node[point, label=below:$i$] at (0,-1) {};
											\node[point, label=below:$j$] at (1,-1) {};
											\node[diam, label=left:$\alpha$] at (0,-0.35) {};
											\node[diam, label=left:$-\alpha$] at (0,-0.65) {};}
&=\tik{\hori[->]{0}{0}{1} \node[point, label=above:$i$] at (0,0) {};
											\node[point, label=above:$j$] at (1,0) {};
											\node[point, label=below:$i$] at (0,-1) {};
											\node[point, label=below:$j$] at (1,-1) {};
											\node[diam, label=right:$\alpha$] at (1,-0.65) {};
											\node[diam, label=right:$-\alpha$] at (1,-0.35) {};}
\end{align*}
and relations can be depicted as follows: 
\begin{align}\tag{\ref{eqn:tL}}
\tik{ 
\hori{0}{0}{1}\straight[->]{0}{1}; \hori[->]{2}{1}{1}\straight[->]{1}{1}\straight{2}{0}\straight{3}{0}
	\node[point, label=above:$j$] at (1,0) {}; 
	\node[point, label=above:$k$] at (2,0) {};
	\node[point, label=above:$i$] at (0,0) {}; 
	\node[point, label=above:$l$] at (3,0) {};
	\node[point, label=below:$i$] at (0,-2) {}; 
	\node[point, label=below:$l$] at (3,-2) {};
	\node[point, label=below:$j$] at (1,-2) {}; 
	\node[point, label=below:$k$] at (2,-2) {};
	\node[diam, label=left:$\alpha$] at (0,-0.35) {};
	\node[diam, label=left:$-\alpha$] at (0,-0.65) {};
	\node[diam, label=left:$\beta$] at (2,-1.35) {};
	\node[diam, label=left:$-\beta$] at (2,-1.65) {};
}
= \tik{
\hori[->]{0}{1}{1}\straight{0}{0}; \hori{2}{0}{1}\straight{1}{0}\straight[->]{2}{1}\straight[->]{3}{1}\node[point, label=above:$j$] at (1,0) {}; 
\node[point, label=above:$k$] at (2,0) {};
\node[point, label=above:$i$] at (0,0) {}; 
\node[point, label=above:$l$] at (3,0) {};
\node[point, label=below:$i$] at (0,-2) {}; 
\node[point, label=below:$l$] at (3,-2) {};
\node[point, label=below:$j$] at (1,-2) {};
\node[point, label=below:$k$] at (2,-2) {};
	\node[diam, label=left:$\alpha$] at (0,-1.35) {};
	\node[diam, label=left:$-\alpha$] at (0,-1.65) {};
	\node[diam, label=left:$\beta$] at (2,-0.35) {};
	\node[diam, label=left:$-\beta$] at (2,-0.65) {};
}
\end{align}
\begin{align*}
\tik{ 
\tzero[-]{0}{1} \fixch{1}  \hori[->]{2}{1}{1}\straight[->]{1}{1}\straight{2}{0}\straight{3}{0}
	\node[point, label=above:$i$] at (1,0) {}; 
	\node[point, label=above:$j$] at (2,0) {};
	\node[point, label=above:$0$] at (0,0) {}; 
	\node[point, label=above:$k$] at (3,0) {};
	\node[point, label=below:$0$] at (0,-2) {}; 
	\node[point, label=below:$k$] at (3,-2) {};
	\node[point, label=below:$i$] at (1,-2) {}; 
	\node[point, label=below:$j$] at (2,-2) {};
	\node[diam, label=left:$\alpha$] at (2,-1.35) {};
	\node[diam, label=left:$-\alpha$] at (2,-1.65) {};
}
= \tik{
\fixch{0}    \tzero[->]{1}{1};\hori{2}{0}{1}\straight{1}{0}\straight[->]{2}{1}\straight[->]{3}{1}
	\node[point, label=above:$i$] at (1,0) {}; 
	\node[point, label=above:$j$] at (2,0) {};
	\node[point, label=above:$0$] at (0,0) {}; 
	\node[point, label=above:$k$] at (3,0) {};
	\node[point, label=below:$0$] at (0,-2) {}; 
	\node[point, label=below:$k$] at (3,-2) {};
	\node[point, label=below:$i$] at (1,-2) {}; 
	\node[point, label=below:$j$] at (2,-2) {};
	\node[diam, label=left:$\alpha$] at (2,-0.35) {};
	\node[diam, label=left:$-\alpha$] at (2,-0.65) {};
}
\end{align*}

\begin{align}\tag{\ref{eqn:t4T}}
		\tik{
		\hori{0}{0}{1}\straight{2}{0}\hori[->]{0}{1}{2}\straight[->]{1}{1}
			\node[point, label=above:$i$] at (0,0) {};
			\node[point, label=above:$j$] at (1,0) {};
			\node[point, label=above:$k$] at (2,0) {};
			\node[point, label=below:$i$] at (0,-2) {};
			\node[point, label=below:$j$] at (1,-2) {};
			\node[point, label=below:$k$] at (2,-2) {};
			\node[diam, label=left:$\alpha$] at (0,-0.35) {};
			\node[diam, label=left:$\beta$] at (0,-1) {};
			\node[diam, label=left:$-\alpha-\beta$] at (0,-1.65) {};
		}
	+	\tik{
	\hori{0}{0}{1} \straight{2}{0}\hori[->]{1}{1}{1}\straight[->]{0}{1}
		\node[point, label=above:$i$] at (0,0) {};
			\node[point, label=above:$j$] at (1,0) {};
			\node[point, label=above:$k$] at (2,0) {};
			\node[point, label=below:$i$] at (0,-2) {};
			\node[point, label=below:$j$] at (1,-2) {};
			\node[point, label=below:$k$] at (2,-2) {};
			\node[diam, label=left:$\alpha$] at (0,-0.35) {};
			\node[diam, label=left:$-\alpha$] at (0,-1) {};
			\node[diam, label=left:$-\beta$] at (1,-1) {};
			\node[diam, label=left:$-\beta$] at (1,-1.65) {};
	}
	=	\tik{
	\hori[->]{0}{1}{1}\straight[->]{2}{1}\hori{0}{0}{2}\straight{1}{0}
	\node[point, label=above:$i$] at (0,0) {};
			\node[point, label=above:$j$] at (1,0) {};
			\node[point, label=above:$k$] at (2,0) {};
			\node[point, label=below:$i$] at (0,-2) {};
			\node[point, label=below:$j$] at (1,-2) {};
			\node[point, label=below:$k$] at (2,-2) {};	
			\node[diam, label=left:$\alpha+\beta$] at (0,-0.35) {};
			\node[diam, label=left:$-\beta$] at (0,-1) {};
			\node[diam, label=left:$-\alpha$] at (0,-1.65) {};
	}
	+	\tik{
	\hori[->]{0}{1}{1} \straight[->]{2}{1}\hori{1}{0}{1}\straight{0}{0}
	\node[point, label=above:$i$] at (0,0) {};
			\node[point, label=above:$j$] at (1,0) {};
			\node[point, label=above:$k$] at (2,0) {};
			\node[point, label=below:$i$] at (0,-2) {};
			\node[point, label=below:$j$] at (1,-2) {};
			\node[point, label=below:$k$] at (2,-2) {};	
				\node[diam, label=left:$\beta$] at (1,-0.35) {};
			\node[diam, label=left:$-\beta$] at (1,-1) {};
			\node[diam, label=left:$\alpha$] at (0,-1) {};
			\node[diam, label=left:$-\alpha$] at (0,-1.65) {};
	}
\end{align}

\begin{align}\tag{\ref{eqn:t4T0}}
		\tik{
		\tzero{0}{1}\straight{2}{0}\tzero[->]{1}{2}\straight[->]{1}{1}
			\node[point, label=above:$0$] at (0,0) {};			
			\node[point, label=above:$i$] at (1,0) {};
			\node[point, label=above:$j$] at (2,0) {};
			\node[point, label=below:$0$] at (0,-2) {};
			\node[point, label=below:$i$] at (1,-2) {};
			\node[point, label=below:$j$] at (2,-2) {};	
		}
	+	\sum_\alpha \tik{
	\tzero{0}{1}\straight{2}{0}\hori[->]{1}{1}{1}\fixch{1}
				\node[point, label=above:$0$] at (0,0) {};			
			\node[point, label=above:$i$] at (1,0) {};
			\node[point, label=above:$j$] at (2,0) {};
			\node[point, label=below:$0$] at (0,-2) {};
			\node[point, label=below:$i$] at (1,-2) {};
			\node[point, label=below:$j$] at (2,-2) {};	
					\node[diam, label=left:$\alpha$] at (1,-1) {};
			\node[diam, label=left:$-\alpha$] at (1,-1.65) {};
	}
&	=	\tik{
\tzero[->]{1}{1}\straight[->]{2}{1}\tzero{0}{2}\straight{1}{0}
			\node[point, label=above:$0$] at (0,0) {};			
			\node[point, label=above:$i$] at (1,0) {};
			\node[point, label=above:$j$] at (2,0) {};
			\node[point, label=below:$0$] at (0,-2) {};
			\node[point, label=below:$i$] at (1,-2) {};
			\node[point, label=below:$j$] at (2,-2) {};	
}
	+	\sum_\alpha \tik{
	\tzero[->]{1}{1}\straight[->]{2}{1}\hori{1}{0}{1}\fixch{0}	
				\node[point, label=above:$0$] at (0,0) {};			
			\node[point, label=above:$i$] at (1,0) {};
			\node[point, label=above:$j$] at (2,0) {};
			\node[point, label=below:$0$] at (0,-2) {};
			\node[point, label=below:$i$] at (1,-2) {};
			\node[point, label=below:$j$] at (2,-2) {};	
				\node[diam, label=left:$\alpha$] at (1,-0.35) {};
			\node[diam, label=left:$-\alpha$] at (1,-1) {};
	}
\end{align}

\begin{align*}\tag{\ref{eqn:t6T0}}
		\tik{
		\hori{1}{0}{1}\fixch{0}\tzero[->]{1}{1}\straight[->]{1}{1}\straight[->]{2}{1}
					\node[point, label=above:$0$] at (0,0) {};			
			\node[point, label=above:$i$] at (1,0) {};
			\node[point, label=above:$j$] at (2,0) {};
			\node[point, label=below:$0$] at (0,-2) {};
			\node[point, label=below:$i$] at (1,-2) {};
			\node[point, label=below:$j$] at (2,-2) {};	
				\node[diam, label=left:$\alpha$] at (1,-0.35) {};
			\node[diam, label=left:$-\alpha$] at (1,-1) {};
		}
	+ \tik{
	\hori{1}{0}{1}\fixch{0}\tzero[->]{1}{2}\straight[->]{1}{1}
				\node[point, label=above:$0$] at (0,0) {};			
			\node[point, label=above:$i$] at (1,0) {};
			\node[point, label=above:$j$] at (2,0) {};
			\node[point, label=below:$0$] at (0,-2) {};
			\node[point, label=below:$i$] at (1,-2) {};
			\node[point, label=below:$j$] at (2,-2) {};	
							\node[diam, label=left:$\alpha$] at (1,-0.35) {};
			\node[diam, label=left:$-\alpha$] at (1,-1) {};
	}
	+	\sum_\beta \tik{
	\hori{1}{0}{1}\fixch{0}\hori[->]{1}{1}{1}\fixch{1}
				\node[point, label=above:$0$] at (0,0) {};			
			\node[point, label=above:$i$] at (1,0) {};
			\node[point, label=above:$j$] at (2,0) {};
			\node[point, label=below:$0$] at (0,-2) {};
			\node[point, label=below:$i$] at (1,-2) {};
			\node[point, label=below:$j$] at (2,-2) {};	
			\node[diam, label=left:$\alpha$] at (1,-0.35) {};
			\node[diam, label=left:$\beta-\alpha$] at (1,-1) {};
			\node[diam, label=left:$-\beta$] at (1,-1.65) {};
	} \\
	=	\tik{
\hori[->]{1}{1}{1}\fixch{1}\tzero{0}{1}\straight{1}{0}\straight{2}{0}
			\node[point, label=above:$0$] at (0,0) {};			
			\node[point, label=above:$i$] at (1,0) {};
			\node[point, label=above:$j$] at (2,0) {};
			\node[point, label=below:$0$] at (0,-2) {};
			\node[point, label=below:$i$] at (1,-2) {};
			\node[point, label=below:$j$] at (2,-2) {};	
			\node[diam, label=left:$\alpha$] at (1,-1) {};
			\node[diam, label=left:$-\alpha$] at (1,-1.65) {};
}
	+	\tik{
	\hori[->]{1}{1}{1}\fixch{1}\tzero{0}{2}\straight{1}{0}
				\node[point, label=above:$0$] at (0,0) {};			
			\node[point, label=above:$i$] at (1,0) {};
			\node[point, label=above:$j$] at (2,0) {};
			\node[point, label=below:$0$] at (0,-2) {};
			\node[point, label=below:$i$] at (1,-2) {};
			\node[point, label=below:$j$] at (2,-2) {};	
			\node[diam, label=left:$\alpha$] at (1,-1) {};
			\node[diam, label=left:$-\alpha$] at (1,-1.65) {};
	}
	+	\sum_\beta \tik{
	\hori{1}{0}{1}\fixch{0}\hori[->]{1}{1}{1}\fixch{1}
				\node[point, label=above:$0$] at (0,0) {};			
			\node[point, label=above:$i$] at (1,0) {};
			\node[point, label=above:$j$] at (2,0) {};
			\node[point, label=below:$0$] at (0,-2) {};
			\node[point, label=below:$i$] at (1,-2) {};
			\node[point, label=below:$j$] at (2,-2) {};	
			\node[diam, label=left:$\beta$] at (1,-0.35) {};
			\node[diam, label=left:$\alpha-\beta$] at (1,-1) {};
			\node[diam, label=left:$-\alpha$] at (1,-1.65) {};
	}
\end{align*}

Let us now introduce another $\CD(\KK)$-moperad $\CD^\Gamma(\KK)$, which will be 
made of all horizontal $N$-diagrams. In arity $n$, objects of 
$\CD^\Gamma(\KK)$ are just labellings $\{1,\dots,n\}\to\Gamma$, 
$\on{Ob}(\CD^\Gamma(\KK))(n)=\Gamma^n$, and the $*$-moperad structure 
is given as follows: 
\begin{itemize}
\item $\circ_0:\Gamma^n\times\Gamma^m\to\Gamma^{n+m}$ 
is the concatenation of sequences, 
\item for every $i\neq 0$, $\circ_i:\Gamma^n\to\Gamma^{n+m-1}$ 
is the partial diagonal 
$$
(\alpha_1,\dots,\alpha_n)\longmapsto (\alpha_1,\dots,\alpha_{i-1},
\underbrace{\alpha_i,\dots,\alpha_i}_{m~\textrm{times}},\alpha_{i+1},\dots,\alpha_n)\,.
$$
\end{itemize}
Given two labellings $\underline{\alpha}=(\alpha_1,\dots,\alpha_n)$ and 
$\underline{\beta}=(\beta_1,\dots,\beta_n)$, the $\kk$-vector space of morphisms 
from $\underline{\alpha}$ to $\underline{\beta}$ in $\CD^\Gamma(\KK)$ is 
the vector space of horizontal $N$-chord diagrams such that, on the $i$-th strand, 
the sum of labels equals $\beta_i-\alpha_i$. 
The $\CD(\kk)$-moperad structure on morphisms is the exact same as the one for $\CD_0^\Gamma(\kk)$. 

We call $\CD^\Gamma(\kk)$ the $\CD(\kk)$-moperad of \textit{$N$-chord diagrams}. It carries an obvious 
action of $\Gamma$, by translation on the labelling of objects. 

\begin{example}[Notable arrows of $\CD^{\Gamma}(\kk)(1)$]
We have the following arrows in $\CD^{\Gamma}(\kk)(1)$: 

\begin{center}
$K^{0,1_{\alpha}}:=t_{01}\cdot$
\begin{tikzpicture}[baseline=(current bounding box.center)]
\tikzstyle point=[circle, fill=black, inner sep=0.05cm]
 \node[point, label=above:$1_{\alpha}$] at (1,1) {};
 \node[point, label=below:$1_{\alpha}$] at (1,-0.25) {};
 \draw[->,thick] (1,1) to (1,-0.20); 
 \draw[zero] (0,1) -- (0,-0.25);
  \node[point, label=above:$0$] at (0,1) {};
 \node[point, label=below:$0$] at (0,-0.25) {};
\end{tikzpicture}
$=$
\begin{tikzpicture}[baseline=(current bounding box.center)]
\tikzstyle point=[circle, fill=black, inner sep=0.05cm]
 \node[point, label=above:$1_{\alpha}$] at (1,1) {};
 \node[point, label=below:$1_{\alpha}$] at (1,-0.25) {};
 \draw[->,thick] (1,1) to (1,-0.20); 
 \draw[zero] (0,1) -- (0,-0.25);
 \draw[densely dotted, thick] (0,0.38) to (1,0.38); 
  \node[point, label=above:$0$] at (0,1) {};
 \node[point, label=below:$0$] at (0,-0.25) {};
\end{tikzpicture}
\qquad
$L^{0,1_{\alpha}}:=$
\begin{tikzpicture}[baseline=(current bounding box.center)]
\tikzstyle point=[circle, fill=black, inner sep=0.05cm]
 \node[point, label=above:$1_{\alpha}$] at (1,1) {};
 \node[point, label=below:$1_{\alpha+\bar1}$] at (1,-0.25) {};
 \draw[->,thick] (1,1) to (1,-0.20); 
 \draw[zero] (0,1) -- (0,-0.25);
 \node[diam, label=right:$\bar{1}$] at (1,0.5) {};
  \node[point, label=above:$0$] at (0,1) {};
 \node[point, label=below:$0$] at (0,-0.25) {};
\end{tikzpicture}
\end{center}
\end{example}
Note that, by definition,  
\[
t_{12}^{\bar{a}}=\on{Ad}_{(L^{0,1_{\bar0}})^{(a)}}(t_{12}^0)\,
\]
for every $a\in\mathbb{N}$. 

There is an obvious $\Gamma$-equivariant morphism of moperads $\omega:\Pa_0^\Gamma\to\on{Ob}(\CD^{\Gamma}(\kk))$, 
over the terminal operad morphism $\Pa\to*=\on{Ob}(\CD(\kk))$, that forgets the underlying parenthsized 
permutation and just remembers the labelling.  Hence we can consider the fake pull-back $\PaCD(\KK)$-moperad 
$$
\text{\gls{PaCDg}}:=\omega^\star \CD^\Gamma(\kk)
$$
of \textit{parenthesized $N$-chord diagrams}, which is still acted on by $\Gamma$.  

\begin{example}[Notable arrow in $\mathbf{PaCD}^{\Gamma}(\kk)(2)$]
We also have the following arrow in $\mathbf{PaCD}^{\Gamma}(\kk)(2)$: 
\begin{center}
$b^{0,1_{\bar0},2_{\bar0}}:=1\cdot$
\begin{tikzpicture}[baseline=(current bounding box.center)]
\tikzstyle point=[circle, fill=black, inner sep=0.05cm]
 \node[point, label=above:$1_{\bar0})$] at (1.5,1) {};
 \node[point, label=below:$(1_{\bar0}$] at (3.5,-0.25) {};
 \node[point, label=above:$2_{\bar0}$] at (4,1) {};
 \node[point, label=below:$2_{\bar0})$] at (4,-0.25) {};
 \draw[zero] (1,1) -- (1,-0.25); 
 \draw[->,thick] (1.5,1) .. controls (1.5,1) and (3.5,-0.20).. (3.5,-0.20);
 \draw[->,thick] (4,1) .. controls (4,0) and (4,0).. (4,-0.20);
  \node[point, label=above:$(0$] at (1,1) {};
 \node[point, label=below:$0$] at (1,-0.25) {};
 \end{tikzpicture} 
\end{center}
\end{example}

\begin{remark}\label{PaCD:cyc:rel}
As explained in subsection \ref{sec-pointings}, there is a map of $\mathfrak{S}$-modules 
$\PaCD \longrightarrow \PaCD^\Gamma$. Following the same convention as before,  we denote by 
$X^{1_{\bar0},2_{\bar0}}$, $H^{1_{\bar0},2_{\bar0}}$ and $a^{1_{\bar0},2_{\bar0},3_{\bar0}}$ 
the images in $\PaCD^\Gamma$ of the corresponding arrows in $\PaCD$.  
The elements $K^{0,1_{\bar0}}$, $L^{0,1_{\bar0}}$ and $b^{0,1_{\bar0},2_{\bar0}}$ are generators 
of the ${\PaCD}(\kk)$-moperad with $\Gamma$-action ${\PaCD}^{\Gamma}(\kk)$. They satisfy the following relations: 
\begin{flalign}
& (L^{0,1_{\bar0}})^{(N)} := L^{0,1_{\bar0}}\cdot L^{0,1_{\bar1}}\cdots \cdot L^{0,1_{\overline{N-1}}}=Id_{01_{\bar{0}}}\,, 
\label{array2}\\
& L^{0,1_{\bar0}}K^{0,1_{\bar1}} = K^{0,1_{\bar0}}L^{0,1_{\bar1}}\,, 
\label{array1}\\
& b^{01_{\bar0},2_{\bar0},3_{\bar0}} b^{0,1_{\bar0},2_{\bar0}3_{\bar0}} = 
b^{0,1_{\bar0},2_{\bar0}} b^{0,1_{\bar0}2,3_{\bar0}} a^{1_{\bar0},2_{\bar0},3_{\bar0}}\,, 
\label{array3}\\
& b^{0,1_{\bar0},2_{\bar0}}L^{0,1_{\bar0}2_{\bar0}}(b^{0,1_{\bar1},2_{\bar1}})^{-1} = L^{0,1_{\bar0}}L^{01_{\bar1},2_{\bar0}} \,, 
\label{array1ter}\\
& L^{01_{\bar0},2_{\bar0}} = b^{0,1_{\bar0},2_{\bar0}}X^{1_{\bar0},2_{\bar0}}(b^{0,2_{\bar0},1_{\bar0}})^{-1}L^{0,2_{\bar0}}b^{0,2_{\bar1},1_{\bar0}}X^{2_{\bar1},1_{\bar0}}(b^{0,1_{\bar0},2_{\bar1}})^{-1} \,, 
\label{array1bis}\\
& K^{0,1_{\bar0}2_{\bar0}} = (b^{0,1_{\bar0},2_{\bar0}})^{-1}K^{0,1_{\bar0}}b^{0,1_{\bar0},2_{\bar0}}
+X^{1_{\bar0},2_{\bar0}}(b^{0,2_{\bar0},1_{\bar0}})^{-1}K^{0,2_{\bar0}}b^{0,2_{\bar0},1_{\bar0}}X^{2_{\bar0},1_{\bar0}} 
\label{array5}\\
& \qquad\qquad +\sum_{k=0}^{N-1}(L^{0,1_{\bar0}})^{(k)}H^{1_{\bar{k}},2_{\bar0}}(L^{0,1_{\bar0}})^{(-k)}\,, \nonumber \\
& K^{01_{\bar0},2_{\bar0}} = b^{0,1_{\bar0},2_{\bar0}}\big(X^{1_{\bar0},2_{\bar0}}(b^{0,2_{\bar0},1_{\bar0}})^{-1}
K^{0,2_{\bar0}}b^{0,2_{\bar0},1_{\bar0}}X^{2_{\bar0},1_{\bar0}}\big)(b^{0,1_{\bar0},2_{\bar0}})^{-1} 
\label{array4}\\
& \qquad\qquad +\sum_{k=0}^{N-1}\big((L^{0,1_{\bar0}})^{(k)}b^{0,1_{\bar{k}},2_{\bar0}}\big)H^{1_{\bar{k}},2_{\bar0}}
\big((L^{0,1_{\bar0}})^{(k)}b^{0,1_{\bar{k}},2_{\bar0}}\big)^{-1}\,. \nonumber 
\end{flalign}
\end{remark}


\subsection{Cyclotomic associators}

We follow the same notation as in \S\ref{sec:cycloGT}. 

\begin{definition}
A cyclotomic associator is a couple $(F,G)$ where $F$ is in 
$\Assoc(\kk)$ and $G$ is a $\Gamma$-equivariant 
isomorphism of $\widehat{\PaB}(\KK)$-moperads from 
$\widehat{\PaB}^{\Gamma}(\KK)$ to $G \PaCD^{\Gamma}(\KK)$ which 
is the identity on objects (the $\widehat{\PaB}(\KK)$-moperad 
structure on $G \PaCD^{\Gamma}(\KK)$ is given by $F$). We denote by
\[
\text{\gls{Assg}}:=\on{Iso}^+_{\on{Mop}}
\Big(\big(\widehat{\PaB}(\KK),\widehat{\PaB}^{\Gamma}(\KK)\big)\,,\,\big(G\PaCD(\KK),G\PaCD^{\Gamma}(\KK)\big)\Big)^{\Gamma}
\]
the set of cyclotomic associators. 
\end{definition}

\noindent\textbf{Notation.}
The Lie algebra $\t_{2}^\Gamma(\kk)$ is the direct sum of its center, that is one dimensional 
and generated by $c:=t_{01} + t_{02} + \sum_{\alpha\in\Gamma} t^\alpha_{12}$, with 
the free Lie algebra $\f_{N+1}(\kk)=\f(\kk)(t_{01},t_{12}^0,...,t_{12}^{N-1})$ 
generated by $t_{01}$ and the $t^\alpha_{12}$'s ($\alpha\in\Gamma$). 
The quotient of $\t_{2}^\Gamma(\kk)$ by its center will be 
denoted $\bar{\t}_2^\Gamma(\kk)$, and is thus isomorphic to $\f_{N+1}(\kk)$. 
For every $\psi\in \bar{\t}_2^\Gamma(\kk)$, we write $\psi^{0,1,2}:=\psi(t_{01},t_{12}^0,...,t_{12}^{N-1})$. 

\medskip

We then have the following theorem:
\begin{theorem}\label{Ass:cyc:iso}
There is a one-to-one correspondence between elements of 
$\Assoc^{\Gamma}({\KK})$ and those of the 
set $\text{\gls{bAssg}}$ consisting of triples
$(\mu,\varphi,\psi)\in  \kk^{\times} \times 
\on{exp}(\hat{\bar{\t}}_3(\kk)) \times \on{exp}(\hat{\bar{\t}}_2^\Gamma(\kk))$,
such that $(\mu,\varphi)\in \on{Ass}(\kk)$ and $\psi$ satisfies 
\begin{flalign}
& \psi^{01,2,3}\psi^{0,1,23}   =   \psi^{0,1,2}\psi^{0,12,3} \varphi^{1,2,3}\,, 
\quad \text{in }   \on{exp}(\hat{\bar{\t}}_3^\Gamma(\kk)) \label{eqn:pseudotwist} \\
& e^{\frac{\mu}{N}t_{01}}\psi^{0,1,2}e^{\frac{\mu}{2}t_{12}^0} (\psi^{0,2,1})^{-1}  
e^{\frac{\mu}{N}t_{02}} \alpha \cdot \left( \psi^{0,2,1} 
e^{\frac{\mu}{2}t^0_{12}}(\psi^{0,1,2})^{-1} \right) =  1\, \quad \text{in }    
\on{exp}(\hat{\bar{\t}}_2^\Gamma(\kk)), & \label{eqn:octogon}
\end{flalign}
where $\alpha=(\bar{0},\bar{1})\in\Gamma^2$.
\end{theorem}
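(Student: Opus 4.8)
The plan is to transport the classical identification of $\on{Assoc}(\kk)$ with $\on{Ass}(\kk)$ recalled in \S\ref{sec:2.8assoc} to the moperadic setting, using the moperadic presentation of $\PaB^{\Gamma}$ from Theorem \ref{thm-gamma-pres} in place of the operadic presentation of Theorem \ref{thm2.3}. Fix $F\in\Assoc(\KK)$, corresponding to $(\mu,\varphi)\in\on{Ass}(\kk)$, so that $F(R^{1,2})=e^{\mu t^{\bar0}_{12}/2}X^{1,2}$ and $F(\Phi^{1,2,3})=\varphi^{1,2,3}a^{1,2,3}$. By Theorem \ref{thm-gamma-pres}, a $\Gamma$-equivariant morphism $G\colon\widehat{\PaB}^{\Gamma}(\KK)\to G\PaCD^{\Gamma}(\KK)$ that is the identity on objects and compatible with $F$ (i.e.\ the $\widehat{\PaB}(\KK)$-moperad structure on the target is the one induced by $F$) amounts exactly to a choice of two arrows $G(E^{0,1_{\bar0}})$ and $G(\Psi^{0,1_{\bar0},2_{\bar0}})$ of $G\PaCD^{\Gamma}(\KK)$, with the prescribed sources and targets, which satisfy the relations obtained from \eqref{eqn:tU}, \eqref{eqn:tMP}, \eqref{eqn:tRP} and \eqref{eqn:tO} by applying $G$. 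Using the generators $K^{0,1_{\bar0}}$, $L^{0,1_{\bar0}}$, $b^{0,1_{\bar0},2_{\bar0}}$ of Remark \ref{PaCD:cyc:rel} to describe the relevant $\on{Hom}$-spaces of $\PaCD^{\Gamma}(\KK)$, one sees that necessarily $G(E^{0,1_{\bar0}})=e^{\nu t_{01}}\,L^{0,1_{\bar0}}$ for some scalar $\nu\in\kk$, and $G(\Psi^{0,1_{\bar0},2_{\bar0}})=\Theta^{0,1,2}\,b^{0,1_{\bar0},2_{\bar0}}$ for some group-like $\Theta\in\on{exp}(\hat\t_2^{\Gamma}(\kk))$.

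First I would read off the normalizations. Applying to the image of \eqref{eqn:tU} the two deleting operations $(-)^{0,\emptyset,2}$ and $(-)^{0,1,\emptyset}$, and using that $b^{0,1_{\bar0},2_{\bar0}}$ and $L^{0,1_{\bar0}}$ delete to identities, one gets that the $c$-component of $\log\Theta$ vanishes, i.e.\ $\Theta=\psi^{0,1,2}$ with $\psi\in\on{exp}(\hat{\bar{\t}}_2^{\Gamma}(\kk))$; this is the moperadic analogue of the vanishing $\mu_2=0$ in the proof of Proposition \ref{Cyc:GT}. Next, substituting $G$ into relation \eqref{eqn:tMP} and using the mixed pentagon \eqref{array3} for $b^{0,1_{\bar0},2_{\bar0}}$ together with $F(\Phi)=\varphi^{1,2,3}a^{1,2,3}$, the $b$'s cancel and one is left precisely with the pseudo-twist pentagon \eqref{eqn:pseudotwist}. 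Substituting $G$ into the twisted octogon \eqref{eqn:tO}, using $F(R)=e^{\mu t/2}X$, $G(\Psi)=\psi\,b$, $G(E)=e^{\nu t_{01}}L$, relation \eqref{array1bis}, and the conjugation identity $t^{\alpha}_{ij}=L^{0,i_{\bar0}}t^{0}_{ij}(L^{0,i_{\bar0}})^{-1}$ to bring all $L$'s and $b$'s to one side, one obtains an identity of the shape \eqref{eqn:octogon} but with $e^{\nu t_{01}}$ and $e^{\nu t_{02}}$ in place of $e^{\frac{\mu}{N}t_{01}}$ and $e^{\frac{\mu}{N}t_{02}}$; comparing the degree-one parts of the two sides (the graded counterpart of Enriquez's identity $\lambda=\tilde a+\mu_1N$, cf.\ the Lemma preceding \eqref{GT-vers-GT}) then forces $\nu=\mu/N$, so that \eqref{eqn:tO} becomes exactly \eqref{eqn:octogon}. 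Once $\nu=\mu/N$ is known, the image of \eqref{eqn:tRP} reduces to the relation \eqref{array1ter} of $\PaCD^{\Gamma}$, which holds identically and yields no further constraint. This produces a well-defined assignment $(F,G)\mapsto(\mu,\varphi,\psi)$ from $\Assoc^{\Gamma}(\KK)$ to the set of triples described in the statement.

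For the converse, given $(\mu,\varphi,\psi)$ with $(\mu,\varphi)\in\on{Ass}(\kk)$ and $\psi$ satisfying \eqref{eqn:pseudotwist} and \eqref{eqn:octogon}, I would set $G(E^{0,1_{\bar0}}):=e^{\frac{\mu}{N}t_{01}}L^{0,1_{\bar0}}$ and $G(\Psi^{0,1_{\bar0},2_{\bar0}}):=\psi^{0,1,2}\,b^{0,1_{\bar0},2_{\bar0}}$, extended $\Gamma$-equivariantly. The equations \eqref{eqn:pseudotwist} and \eqref{eqn:octogon}, combined with the relations \eqref{array2}, \eqref{array1}, \eqref{array3}, \eqref{array1ter} and \eqref{array1bis} of $\PaCD^{\Gamma}$, say precisely that the images under $G$ of \eqref{eqn:tU}, \eqref{eqn:tMP}, \eqref{eqn:tRP} and \eqref{eqn:tO} hold in $G\PaCD^{\Gamma}(\KK)$, so by the universal property of Theorem \ref{thm-gamma-pres} this datum extends uniquely to a $\Gamma$-equivariant moperad morphism $G$ compatible with $F$. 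It is an isomorphism: since $\mu\in\kk^{\times}$, on each $\on{Hom}$-space $G$ induces on the associated graded a morphism determined by the invertible rescalings by $\mu/N$ (resp.\ $\mu$) on the degree-one parts of $E$, $\Psi$ (resp.\ $R$, $\Phi$), hence is an isomorphism on the associated graded and therefore an isomorphism. The two constructions are mutually inverse by the uniqueness clause of Theorem \ref{thm-gamma-pres}, which gives the desired bijection.

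The main obstacle is the computation in the second paragraph: pushing the image of the twisted octogon \eqref{eqn:tO} through the repeated conjugations by $b$'s, $X$'s and $L$'s, together with the relations of $\t^{\Gamma}(\kk)$, into the normal form \eqref{eqn:octogon}, and in particular pinning down the normalization $\nu=\mu/N$. As in the proof of Proposition \ref{prop-MPO}, it is convenient to first treat the case $N=1$ and then pass to general $N$ by taking the chosen lifts. This step is bookkeeping-heavy, being the associator counterpart of the computations behind Proposition \ref{Cyc:GT} and of Enriquez's identity $\lambda=\tilde a+\mu_1N$, but it is structurally forced. Everything else — the pseudo-twist pentagon \eqref{eqn:pseudotwist}, the unit normalization coming from \eqref{eqn:tU}, and the fact that $G$ is an isomorphism — is routine given Theorem \ref{thm-gamma-pres} and Remark \ref{PaCD:cyc:rel}.
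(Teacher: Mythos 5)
Your proposal follows the same overall strategy as the paper: use the presentation of $\PaB^\Gamma$ from Theorem \ref{thm-gamma-pres} to reduce $G$ to its values $G(E^{0,1_{\bar0}})=u\,L^{0,1_{\bar0}}$ and $G(\Psi^{0,1_{\bar0},2_{\bar0}})=v\,b^{0,1_{\bar0},2_{\bar0}}$, kill the central component of $v$ via \eqref{eqn:tU}, observe that \eqref{eqn:tRP} is automatically satisfied (the paper isolates this as Lemma \ref{RP:PACD}, using that $(t_{01})^{0,12}=t_{01}+(t_{01})^{01,2}$ is central), and translate \eqref{eqn:tMP} and \eqref{eqn:tO} into \eqref{eqn:pseudotwist} and \eqref{eqn:octogon}. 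The one place where you genuinely diverge is the normalization $\nu=\mu/N$ (the paper's Lemma \ref{lambdamu}): the paper applies the projection $\varepsilon:\t_n^\Gamma\to\t_{n-1}^0$ sending $t_{0i}\mapsto 0$ and $t_{ij}^\alpha\mapsto\frac1N t_{i-1,j-1}$, deduces $\psi^{0,1,2}=\varphi^{0,1,2}e^{st_{12}}$ from the mixed pentagon, and then invokes the hexagon relation for $(\mu,\varphi)$ twice; you instead propose a direct degree-one comparison inside the octogon relation. Your route does work and is more elementary (it avoids the hexagon entirely), but as stated it has a small gap: the degree-one part of the left-hand side of your provisional octogon contains the unknown degree-one contributions of $\psi^{0,1,2}$, $\psi^{0,2,1}$ and their $\alpha$-translates, so ``comparing degree-one parts'' only yields $\nu=\mu/N$ after you apply the linear functional summing the coefficients of all $t_{12}^\gamma$ (and killing $t_{01}$), under which those four $\psi$-contributions cancel in pairs while $\nu(t_{01}+t_{02})=-\nu\sum_\gamma t_{12}^\gamma$ contributes $-\nu N$ and $\tfrac{\mu}{2}(t_{12}^0+t_{12}^{N-1})$ contributes $\mu$. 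You should make that cancellation explicit. Apart from this, your handling of the converse direction (defining $G$ on generators, checking the relations via Remark \ref{PaCD:cyc:rel}, and verifying invertibility on the associated graded) is slightly more explicit than the paper's, which treats the two directions simultaneously by asserting equivalence of the relation sets; both are fine.
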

\begin{proof}
Let $\tilde{F}$ be a $\kk$-associator $\widehat{\PaB}(\KK) \longrightarrow G \PaCD(\KK)$, and let $\tilde{G}$ 
be a $\Gamma$-equivariant isomorphism 
\[
\widehat{\PaB}^{\Gamma}(\KK) \longrightarrow G \PaCD^{\Gamma}(\KK)
\]
of $\widehat{\PaB}(\KK)$-moperads, which is the identity on objects. It corresponds to a unique 
$\Gamma$-equivariant morphism $G:\PaB^{\Gamma}\longrightarrow G \PaCD^{\Gamma}(\KK)$. 
From the presentation of $\PaB^{\Gamma}$, we know that $G$ is uniquely determined by the images 
of the morphisms $E^{0,1_{\bar0}} \in \on{Hom}_{\PaB^{\Gamma}(\KK)(1)}(01_0,01_1)$ and 
$\Psi^{0,1_{\bar0},2_{\bar0}} \in \on{Hom}_{\PaB^{\Gamma}(\KK)(2)}((01_0)2_0,0(1_02_0))$. 
Thus, there are elements $u\in \exp(\hat\t_{1}^\Gamma(\kk))$ and $v\in \exp(\hat\t_{2}^\Gamma(\kk))$ such that 
\begin{itemize}
\item $G(E^{0,1_{\bar0}})=u L^{0,1_{\bar0}}$, with $u=e^{\mu_1 t_{01}}$ for some $\mu_1\in\kk$, necessarily; 
\item $G(\Psi^{0,1_{\bar0},2_{\bar0}})=v b^{0,1_{\bar0},2_{\bar0}}$. 
\end{itemize}
These elements must satisfy the following relations, that are images of \eqref{eqn:tRP}, \eqref{eqn:tMP} 
and \eqref{eqn:tO}, respectively: 
\begin{flalign}
& v^{0,1,2}u^{0,12}(\bar1,\bar1)\cdot(v^{0,1,2})^{-1}=u^{0,1}(\bar1,\bar0)\cdot u^{01,2} \quad \big(\text{in }\on{exp}(\hat{\t}_2^\Gamma(\kk))\big),
\label{eqn:otherpentagon} \\
& v^{01,2,3}v^{0,1,23} = v^{0,1,2}v^{0,12,3}\varphi^{1,2,3} \quad \big(\text{in } \on{exp}(\hat{\t}_3^\Gamma(\kk)) \big),
\label{eqn:pseudotwistbis} \\
& u^{01,2}=v^{0,1,2}e^{\frac{\mu}{2}t_{12}^0}(v^{0,2,1})^{-1}u^{0,2}
\alpha\cdot\big(v^{0,2,1}e^{\frac{\mu}{2}t_{12}^0}(v^{0,1,2})^{-1}\big) \quad\big(\text{in }\on{exp}(\hat{\t}_2^\Gamma(\kk))\big). & 
\label{eqn:octogonbis}
\end{flalign}
\begin{lemma}\label{RP:PACD}
Equation \eqref{eqn:otherpentagon} is satisfied for arbitrary $u$ and $v$. 
\end{lemma}
\begin{proof}
First of all, observe that the diagonal action of $\Gamma$ on $\t_n^\Gamma(\kk)$ is trivial. 
Hence $(\bar1,\bar1)\cdot v^{0,1,2}=v^{0,1,2}$. 
Second of all, recall the formul\ae~for the moperadic structure on $\t^\Gamma(\kk)$: 
\begin{equation}\label{eqn:centralelement}
(t_{01})^{0,12}=t_{01}+t_{02}+\sum_{\gamma\in\Gamma}t_{12}^\gamma=t_{01}+(t_{01})^{01,2}\,.
\end{equation}
Therefore, $(\bar1,\bar0)\cdot u^{01,2}=u^{01,2}$, and $u^{0,12}=u^{0,1}u^{01,2}$. 
Finally, the above element \eqref{eqn:centralelement} is central in $\t_2^\Gamma(\kk)$, thus 
so is $u^{0,12}=u^{0,1}u^{01,2}$, and equation \eqref{eqn:otherpentagon} is satisfied. 
\end{proof}
Considering that we have a Lie algebra isomorphism 
\[
\t_{2}^\Gamma(\kk) \simeq \kk c \oplus \f_{N+1}(\kk)\,,
\] 
where $c=t_{01}+t_{02} + \sum_{a\in\Gamma} t_{12}^a$, then $v$ is of the form 
$e^{\mu_2 c}\psi(t_{01},t_{12}^0,...,t_{12}^{N-1})$ for some $\mu_2\in\kk$. 
\begin{lemma}\label{lambdamu}
We have $\mu_1=\frac{\mu}{N}$. In particular, $u=e^{\frac{\mu}{N}t_{01}}$. 
\end{lemma}
\begin{proof}[Proof of the Lemma]
Denote by $\varepsilon:\mathfrak{t}_{n}^\Gamma \longrightarrow \mathfrak{t}_{n - 1}^0$ 
the Lie algebra morphism sending $t_{0i}$ to $0$, and $t_{ij}^\alpha$ to $\frac{1}{N}t_{i-1,j-1}$ if $0<i<j$. 
We have $\varepsilon (u^{0, 1, 2}) = e^{s t_{01}}\in\exp(\hat{\mathfrak{t}}_{1}^1)$, 
for some $s \in \kk$. 
Now, the image of the mixed pentagon relation \eqref{eqn:pseudotwistbis} by $\varepsilon$ yields:
\begin{equation}\label{reltr1}
  u^{0, 1, 2} e^{s (t_{01} + t_{02})} 
	= e^{s t_{01}} e^{s (t_{02} +t_{12})} \varphi^{0, 1, 2}\,.
\end{equation}
Moreover, as $[t_{01}, t_{02} + t_{12}] = 0$, we further have 
$e^{s t_{01}} e^{s(t_{02} + t_{12})} = e^{s (t_{01} + t_{02} + t_{12})}$. 
Thus equation \eqref{reltr1} is equivalent to
\begin{equation}
  \psi^{0, 1, 2} 
	= e^{s (t_{01} + t_{02} + t_{12})} \varphi^{0, 1, 2} e^{-s(t_{01} + t_{02})}
	= \varphi^{0, 1, 2} e^{st_{12}}\,,
\end{equation}
where we have used that $t_{01} + t_{02} + t_{12}$ is central in $\t_2^0(\kk)$. 
Now, we consider the image of equation \eqref{eqn:octogonbis} in 
$\bar{\t}_2^0=\t_2^0/(t_{01} + t_{02} + t_{12})$. 
Using that $\psi^{0, 1, 2}=\varphi^{0, 1, 2} e^{st_{12}}$ we have 
\begin{eqnarray*}
  &  & e^{-\mu_1t_{01}} = \varphi^{0, 1, 2} e^{s t_{12}} e^{\frac{\mu}{2}
  \frac{t_{12}}{N}} (\varphi^{0, 2, 1} e^{s t_{12}})^{- 1} e^{\mu_1 t_{02}} \varphi^{0, 2,
  1} e^{s t_{12}} e^{\frac{\mu}{2}\frac{t_{12}}{N}} (\varphi^{0, 1, 2} e^{s
  t_{12}})^{- 1} \\
  & \Leftrightarrow & 1 = e^{\mu_1 t_{01}} \varphi^{0, 1, 2}
  e^{\frac{\mu}{2N} t_{12}} (\varphi^{0, 2, 1})^{- 1} e^{\mu_1 t_{02}}
  \varphi^{0, 2, 1} e^{\frac{\mu}{2N} t_{12}} (\varphi^{0, 1, 2})^{- 1} \\
	& \Leftrightarrow & 1 = e^{\frac{\mu_1}{2} t_{01}} \varphi^{0, 1, 2} e^{\frac{\mu}{2N} t_{12}}
   (\varphi^{0, 2, 1})^{- 1} e^{\frac{\mu_1}{2} t_{02}} e^{\frac{\mu_1}{2} t_{02}}
   \varphi^{0, 2, 1} e^{\frac{\mu}{2N} t_{12}} (\varphi^{0, 1, 2})^{- 1}
   e^{\frac{\mu_1}{2} t_{01}} \\
	& \Leftrightarrow & 1 = e^{\frac{N\mu_1-\mu}{2N} t_{01}}\varphi^{1,0,2}e^{\frac{N\mu_1-\mu}{2N} t_{02}}
	e^{\frac{N\mu_1-\mu}{2N} t_{02}}\psi^{2,0,1} e^{\frac{N\mu_1-\mu}{2N} t_{01}} 
\end{eqnarray*}
where for the last equivalence we have used the hexagon relation for the couple $(\mu, \varphi)$ twice. 
This is now equivalent to 
\[
1 = \varphi^{1, 0, 2} e^{(\mu_1-\frac{\mu}{N}) t_{02}} \varphi^{2, 0, 1} e^{(\mu_1-\frac{\mu}{N}) t_{01}}
\]
Sending the generator $t_{02}$ to $0$, we get that $e^{(\mu_1-\frac{\mu}{N})t_{01}}=1$, and thus $\mu_1 = \frac{\mu}{N}$.
\end{proof}
\noindent\textit{End of the proof of the Theorem.} 
Finally, relation \eqref{eqn:tU} is equivalent to 
$$
e^{\mu_2\partial_1(c)} \partial_1\psi(t_{01},t_{12}^0,\ldots,t_{12}^{N-1}) = 1\,,
$$
where we consider the restriction operator $\partial_1 : \t_2^\Gamma(\kk) \to \t_1^\Gamma(\kk)$. 
We have $\partial_1(t_{02}) =t_{01}$ and $\partial_1(t_{01}) = \partial_1(t^\alpha_{12}) = 0$. 
Therefore this equation reduces to $e^{\mu_2\partial_1(c)}=1$ which implies $\mu_2=0$. 
We conclude that $v=\psi(t_{01},t_{12}^0,\ldots,t_{12}^{N-1})$.
This automatically shows that relations \eqref{eqn:pseudotwistbis} and \eqref{eqn:pseudotwist} are equivalent. 
Now, in order to show that relation
\eqref{eqn:octogonbis} is equivalent to relation \eqref{eqn:octogon}, we notice that $[c,-]=0$ and that 
$\psi(t_{01},t_{12}^0,...,t_{12}^{N-1})$ has no component in weight 1 so one can collect the factors 
$e^{\frac{\mu}{2}c}$ in equation \eqref{eqn:octogonbis}. 
The element $u^{01,2}$ is of the form $e^{\frac{\mu}{N}(t_{02}+ \sum_{a\in\Gamma} t_{12}^a)}$. 
Then, by noticing that $-t_{01}=t_{02}+ \sum_{a\in\Gamma} t_{12}^a$ in $\bar{\t}_2^\Gamma(\kk)$, 
we obtain the equivalence between \eqref{eqn:octogonbis} and \eqref{eqn:octogon}.
\end{proof}
\begin{remark}
The set $\on{Ass}^\Gamma(\kk)$ is denoted $\mathbf{Pseudo}_{\bar{1}}(N,\kk)$ in \cite{En}. 
One has more generally a set $\mathbf{Pseudo}_{\gamma}(N,\kk)$ for every $\gamma\in(\mathbb{Z}/N\mathbb{Z})^\times$: 
one just has to replace $\alpha=(\bar0,\bar1)$ with $(\bar0,\gamma)$ in the definition appearing in the statement of 
Theorem \ref{Ass:cyc:iso}. 
A variation on the proof of Theorem \ref{Ass:cyc:iso} shows that $\mathbf{Pseudo}_{\gamma}(N,\kk)$ can be identified with 
$\Gamma$-equivariant isomorphisms between the $\widehat{\PaB}(\KK)$-moperad $\widehat{\PaB}^{\Gamma}(\KK)$ and the 
$G \PaCD(\KK)$-moperad $G\PaCD^{\Gamma}(\KK)$ which, on objects, is the global relabelling given by the automorphism of 
$\Gamma$ sending $\bar1$ to $\gamma$. 
\end{remark}
\begin{example}[Cyclotomic KZ Associator]
Consider the differential equation 
\begin{equation} \label{eq:H2}
 { {\on{d}} \over {\on{d}z} } H(z) 
= \left( {{t_{01}}\over z}
+ \sum_{\alpha\in\Z/N\Z}\frac{t_{12}^\alpha}{z-\zeta^\alpha}\right) H(z), 
\end{equation}
where $\zeta$ is a primitive $N$th root of unity, and let $H_{0^{+}},H_{1^{-}}$ be the solutions such that $H_{0^{+}}(z)
\sim z^{t_{01}}$ when $z\to 0^{+}$ and $H_{1^{-}}(z)
\sim z^{t^1_{12}}$ when $z\to 1^{-}$. Then, in our convention,  the renormalized holonomy $\text{\gls{KZAssg}} = H_{0^{+}}H_{1^{-}}^{-1} \in \on{exp}(\hat{\bar{\t}}_2^\Gamma(\kk)$ 
from $0$ to $1$ of the above differential equation is the cyclotomic KZ associator constructed by Enriquez in \cite{En}. 
More precisely, Enriquez showed that the triple $(2 i \pi,\Phi_{\on{KZ}}, \Psi_{\on{KZ}})$ is 
in $\mathbf{Pseudo}_{-\bar1}(N,{\C})$.
\end{example}


\subsection{Graded cyclotomic Grothendieck-Teichm\"uller groups}

\begin{definition}
The \textit{graded cyclotomic Grothendieck--Teichm\"uller group} is the group 
\[
\text{\gls{GRTg}}:=\on{Aut}_{\on{Mop}}^+\big(\PaCD(\kk),\PaCD^{\Gamma}(\KK)\big)^{\Gamma}
\]
of $\Gamma$-equivariant automorphisms of $\big(\PaCD(\KK),\PaCD^{\Gamma}(\KK)\big)$ which 
are the identity on objects. 
\end{definition}
In the rest of this subsection, we again compare our operadic definition 
with the ones given by Enriquez in \cite{En}.  
\begin{definition}
Define $\on{GRT}_{(\bar 1,1)}^\Gamma(\kk)$ as the set of pairs $(g,h)$, 
with $g\in \on{GRT}_1(\kk)$ and 
$h\in \exp(\hat{\bar{\t}}_{2}^{\Gamma}(\kk))$, such that
\begin{equation}\label{def:grt:cyc:1} 
h^{0,1,2}(h^{0,2,1})^{-1}h(t_{02}| t^{1}_{12},t^{0}_{12},\ldots,
t^{2-N}_{12})h(t_{01}|t^{1}_{12},\ldots,t^{N}_{12})^{-1}=1 
\quad\textrm{in}~\exp\big(\hat{\bar{\t}}_{2}^{\Gamma}(\kk)\big) \,,
\end{equation}
\begin{equation}\label{def:grt:cyc:2} 
t_{01} + \sum_{a=0}^{N-1} \Ad(h(t_{01}|t^a_{12},\ldots,
t^{a+N-1}_{12}))(t^a_{12})+ \Ad\left( h^{0,1,2}(h^{0,2,1})^{-1} \right)(t_{02}) = 0
\quad\textrm{in}~\hat{\bar{\t}}_{2}^{\Gamma}(\kk)\,, 
\end{equation}
\begin{equation}\label{def:grt:cyc:3} 
h^{01,2,3} h^{0,1,23} =h^{0,1,2}  h^{0,12,3}  g^{1,2,3}
\quad\textrm{in}~\exp(\hat \t_{3}^{\Gamma}(\kk))\,.
\end{equation}
One can show that $\on{GRT}^\Gamma_{(\bar 1,1)}(\kk)$ is a group when equipped with the product 
$$
(g_1,h_1) * (g_2,h_2) = (g,h)\,,
$$
where
\begin{itemize}
\item $g(t_{12},t_{23}) =  g_1(t_{12},\Ad g_2(t_{12},t_{23})(t_{23}))g_2(t_{12},t_{23}), $
\item $h^{0,1,2} =  h_1\left( t_{01} | \Ad((h_2^{0,1,2}))(t_{12}^0),\ldots,
\Ad(h_2(t_{01}| t_{12}^{N-1},\ldots,t_{12}^{2N-2}))(t_{12}^{N-1}) \right) h_2^{0,1,2}.$
\end{itemize}
The action of $(\Z/N\Z)^\times \times \kk^\times$ by automorphisms 
of $\t_{3}^{\Gamma}$ (resp. $\t_3$) given by $(c,\gamma)\cdot t_{0i}=\gamma t_{0i},
(c,\gamma)\cdot t^{\alpha}_{ij}=\gamma t^{c\alpha}_{ij}$ 
(resp. $(c,\gamma)\cdot t_{ij}=\gamma t_{ij}$) induces its action by automorphisms of 
$\on{GRT}_{(\bar 1,1)}^\Gamma(\kk)$. We denote by $\on{GRT}^\Gamma(\kk)$ 
the corresponding semidirect product, and 
$\text{\gls{bGRTg}}=\on{GRT}_{(\bar 1,1)}^\Gamma(\kk) \rtimes \kk^\times$.
\end{definition}

\begin{remark}
It is very likely that there should be an analog of \cite[Proposition 5.7]{DrGal}: we believe that 
equations \eqref{def:grt:cyc:1} and \eqref{def:grt:cyc:3} should imply equation \eqref{def:grt:cyc:2}. 
\end{remark}

\begin{proposition}\label{GRT:cyc:cor}
There is an injective group morphism $\GRT^\Gamma(\KK) \to \on{GRT}_{\bar 1}^\Gamma(\kk)$.
\end{proposition}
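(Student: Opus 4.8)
The plan is to run the graded analogue of the proof of Proposition~\ref{Cyc:GT}, using the presentation of $\PaCD^\Gamma(\KK)$ recorded in Remark~\ref{PaCD:cyc:rel} in place of the one of $\PaB^\Gamma$. An element of $\GRT^\Gamma(\KK)$ is a $\Gamma$-equivariant pair $(G_0,G)$: an identity-on-objects automorphism $G_0$ of $\PaCD(\KK)$ --- determined, by the identification recalled in \S\ref{subsec-GRT}, by a couple $(\lambda,g)\in\on{GRT}(\kk)=\on{GRT}_1(\kk)\rtimes\kk^\times$ with $G_0(X^{1,2})=X^{1,2}$, $G_0(H^{1,2})=e^{\lambda t_{12}}H^{1,2}$, $G_0(a^{1,2,3})=g(t_{12},t_{23})a^{1,2,3}$ --- together with a $\Gamma$-equivariant automorphism $G$ of the $\PaCD(\KK)$-moperad $\PaCD^\Gamma(\KK)$, compatible with $G_0$ and the identity on objects. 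Being the identity on objects, $G$ preserves the distinguished class $\bar1$, which is why the target is the $\kk^\times$-extension $\on{GRT}^\Gamma_{(\bar1,1)}(\kk)\rtimes\kk^\times=\on{GRT}_{\bar1}^\Gamma(\kk)$ and not the larger one involving $(\Z/N\Z)^\times$. By the presentation, $G$ is entirely determined by the images of $K^{0,1_{\bar0}}$, $L^{0,1_{\bar0}}$, $b^{0,1_{\bar0},2_{\bar0}}$.

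Next I would pin down those images. Since $\hat\t_1^\Gamma(\kk)=\kk\,t_{01}$ and $G$ is graded, necessarily $G(K^{0,1_{\bar0}})=\mu_1 t_{01}$ with $\mu_1\in\kk^\times$, and $G(L^{0,1_{\bar0}})=e^{ct_{01}}L^{0,1_{\bar0}}$; plugging this into~\eqref{array2} and using $\Gamma$-equivariance (conjugation being trivial inside $\hat\t_1^\Gamma(\kk)$) forces $c=0$, hence $G(L^{0,1_{\bar0}})=L^{0,1_{\bar0}}$, and then $G(L^{01_{\bar0},2_{\bar0}})=L^{01_{\bar0},2_{\bar0}}$ and $G(K^{01_{\bar0},2_{\bar0}})=\mu_1(t_{02}+\sum_{\gamma\in\Gamma}t_{12}^\gamma)$ by compatibility with $\circ_0$ and the formula $(t_{01})^{0,12}=t_{01}+t_{02}+\sum_{\gamma}t_{12}^\gamma$. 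Also $G(b^{0,1_{\bar0},2_{\bar0}})=h^{0,1,2}b^{0,1_{\bar0},2_{\bar0}}$ with $h\in\exp(\hat\t_2^\Gamma(\kk))$; as in the proof of Theorem~\ref{Ass:cyc:iso}, applying the restriction $\partial_1$ to the image of~\eqref{array3} or~\eqref{array1ter} kills the central part of $h$, so $h\in\exp(\hat{\bar{\t}}_2^\Gamma(\kk))$, and a computation mirroring Lemma~\ref{lambdamu} (the image of~\eqref{array1bis} combined with the hexagon for $(\lambda,g)$) fixes $\mu_1$ in terms of $\lambda$, namely $\mu_1=\lambda/N$. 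Thus the data reduces to $(g,h,\lambda)$.

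I would then push $G$ through the remaining relations of $\PaCD^\Gamma(\KK)$: relation~\eqref{array3} gives the mixed pentagon~\eqref{def:grt:cyc:3}, relation~\eqref{array1bis} gives the octogon~\eqref{def:grt:cyc:1}, and relation~\eqref{array5} (its companion~\eqref{array4} being then automatic) gives the infinitesimal relation~\eqref{def:grt:cyc:2}, while~\eqref{array1ter} and~\eqref{array2} hold for free (cf.\ Lemma~\ref{RP:PACD}). This produces a well-defined map $(G_0,G)\mapsto(g,h,\lambda)$ from $\GRT^\Gamma(\KK)$ to $\on{GRT}_{\bar1}^\Gamma(\kk)$, injective because $(G_0,G)$ is reconstructed from the generator images. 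Finally I would check multiplicativity by composing two such pairs and evaluating on the three generators, exactly as in the proof of Proposition~\ref{Cyc:GT}: $\Gamma$-equivariance of the first factor supplies the conjugation identities $(L^{0,1_{\bar0}})^{(a)}h(t_{01}\mid t_{12}^0,\dots,t_{12}^{N-1})=h(t_{01}\mid t_{12}^a,\dots,t_{12}^{a+N-1})(L^{0,1_{\bar0}})^{(a)}$ needed to rewrite $G^{(1)}(t_{12}^a)$, and the composite image of $b^{0,1_{\bar0},2_{\bar0}}$ comes out to be exactly $h_1\big(t_{01}\mid\Ad(h_2^{0,1,2})(t_{12}^0),\dots,\Ad(h_2(t_{01}\mid t_{12}^{N-1},\dots,t_{12}^{2N-2}))(t_{12}^{N-1})\big)h_2^{0,1,2}$, which is the product law of $\on{GRT}^\Gamma_{(\bar1,1)}(\kk)$, the operad part reproducing the product of $\on{GRT}(\kk)$.

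The main obstacle will be this last step: faithfully converting the relations~\eqref{array5} and~\eqref{array1bis} into Enriquez's equations~\eqref{def:grt:cyc:2} and~\eqref{def:grt:cyc:1} requires careful bookkeeping of the $\Gamma$-labels, of the adjoint conjugations by powers of $L^{0,1_{\bar0}}$, and of the moperadic insertion formulae for $\t^\Gamma(\kk)$. A preliminary point that must also be settled is that $\PaCD^\Gamma(\KK)$ is genuinely \emph{presented} (not merely generated) by $K^{0,1_{\bar0}}$, $L^{0,1_{\bar0}}$, $b^{0,1_{\bar0},2_{\bar0}}$ and the relations of Remark~\ref{PaCD:cyc:rel}; I expect to get this by transporting Theorem~\ref{thm-gamma-pres} through a cyclotomic associator (which exists by Theorem~\ref{Ass:cyc:iso}), so that the choice of associator identifies $\Gamma$-equivariant endomorphisms of $\PaCD^\Gamma$ with those of $\PaB^\Gamma$.
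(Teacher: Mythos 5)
Your overall route is the same as the paper's: read off the images of the generators $K^{0,1_{\bar0}}$, $L^{0,1_{\bar0}}$, $b^{0,1_{\bar0},2_{\bar0}}$, push the relations of Remark \ref{PaCD:cyc:rel} through the automorphism to land in Enriquez's equations, and check multiplicativity on generators as in Proposition \ref{Cyc:GT}. However, there is one genuine error in the middle step: the normalization of $G(K^{0,1_{\bar0}})$. You claim $\mu_1=\lambda/N$, obtained by "mirroring Lemma \ref{lambdamu}" from the image of \eqref{array1bis}; but \eqref{array1bis} involves only $L$'s, $X$'s and $b$'s and contains no occurrence of $K$, so it cannot constrain $\mu_1$ at all (it yields \eqref{def:grt:cyc:1} with no scalar parameters). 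The correct constraint comes from \eqref{array4} (or \eqref{array5}): its degree-one part reads $\mu_1(t_{01}+t_{02}+\sum_\gamma t_{12}^\gamma)=\mu_1 t_{01}+\mu_1 t_{02}+\lambda\sum_\gamma t_{12}^\gamma$, forcing $\mu_1=\lambda$, which is also what the uniform $\kk^\times$-rescaling in the semidirect product $\on{GRT}^\Gamma_{(\bar1,1)}(\kk)\rtimes\kk^\times$ demands. The $1/N$ in Lemma \ref{lambdamu} is an artefact of the associator torsor (where the coefficient of $t_{12}$ is $\mu/2$ while that of $t_{01}$ is $\mu/N$) and does not survive to the graded group; with your normalization the degree-one part of the image of the central relation $t_{01}+t_{02}+\sum_a t_{12}^a=0$ would fail for $N>1$, so the error propagates rather than washing out.

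Two smaller remarks. First, the final worry about whether $\PaCD^\Gamma(\KK)$ is \emph{presented} (and not merely generated) by these generators and relations is not needed for this Proposition: generation gives well-definedness and injectivity of the assignment $(G,H)\mapsto(\lambda,g,h)$, and the fact that the relations \emph{hold} (Remark \ref{PaCD:cyc:rel}) is all that is used to show the image satisfies \eqref{def:grt:cyc:1}--\eqref{def:grt:cyc:3}; completeness of the presentation would only be relevant for surjectivity, which the paper deliberately postpones and obtains from the bitorsor structure in Theorem \ref{thm-last-torsor}. Second, the paper derives \eqref{def:grt:cyc:2} from \eqref{array4} rather than \eqref{array5}; this is immaterial. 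With the normalization corrected to $\mu_1=\lambda$, the rest of your argument, including the multiplicativity check via the conjugation identities for $(L^{0,1_{\bar0}})^{(a)}$, matches the paper's proof.
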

\begin{proof}
Let $(G,H)$ be an element in $\GRT^{\Gamma}(\KK)$. We have
\begin{itemize}
\item $G(X^{1,2})=X^{1,2}$,
\item $G(H^{1,2})=\lambda H^{1,2}$,
\item $G(a^{1,2,3})=g(t_{12},t_{23}) a^{1,2,3}$,
\item $H(b^{0,1_{\bar0},2_{\bar0}})= w b^{0,1_{\bar0},2_{\bar0}}$,
\item $H(K^{0,1_{\bar0}})=\mu K^{0,1_{\bar0}}$,
\item $H(L^{0,1_{\bar0}})= L^{0,1_{\bar0}}$,
\end{itemize}
where $(\lambda,g)\in \on{GRT}(\kk)$ and $(\mu,w) \in \kk^{\times} \times \on{exp}\big(\hat{\t}^\Gamma_2(\kk)\big)$. 
Borrowing the notation from the previous subsection, let us write 
$w=e^{\nu c}h(t_{01},t_{12}^0,\dots,t_{12}^{N-1})$, 
with $h\in\hat{\on{F}}_{N+1}(\kk)\simeq \on{exp}\big(\hat{\bar{\t}}^\Gamma_2(\kk)\big)$. 
First of all, observe that one can show, along the same lines as in the previous subsection\footnote{See the proof 
of Lemma \ref{lambdamu}, and the end of the proof of Theorem \ref{Ass:cyc:iso}. }, that $\mu=\lambda$ and $\nu=0$. 
Second of all, \eqref{def:grt:cyc:3} follows directly from \eqref{array3}, and \eqref{def:grt:cyc:1} follows directly from \eqref{array1bis}. 
Finally, \eqref{array4} implies that
\[
t_{02}+ \sum_{a=0}^{N-1} t^a_{12} =  \Ad\left( h^{0,1,2}(h^{0,2,1})^{-1} \right)(t_{02})  
	+ \sum_{a=0}^{N-1} \Ad\left(h(t_{01}|t^a_{12},\ldots,t^{a+N-1}_{12})\right)(t^a_{12})\,,
\]
and since $t_{02}+ \sum_{a=0}^{N-1} t^a_{12}=-t_{01}$ in $\hat{\bar{\t}}_{2}^{\Gamma}(\kk)$, we get \eqref{def:grt:cyc:2}.

\medskip

As a consequence of the above discussion, the assignment 
\begin{equation}\label{eqn:GRT-to-GRT}
(G,H)\longmapsto \big(\lambda, g(t_{12},t_{23}), h(t_{01}|t^{0}_{12},\ldots,t^{N-1}_{12})\big)
\end{equation}
defines a map $\GRT^\Gamma(\KK) \to \on{GRT}_{\bar 1}^\Gamma(\kk)$, that is obviously injective. 
It remains to prove that the composition of automorphisms in $\GRT^{\Gamma}(\KK)$
corresponds to the composition law of the group $\on{GRT}_{\bar 1}^\Gamma(\kk)$. 
We already know that the composition of automorphisms $G_1$ and $G_2$ in $\GRT(\KK)$
corresponds to the composition law in $\on{GRT}(\kk)$, that is, the associated 
couples $(\lambda_1,g_1)$ and $(\lambda_2,g_2)$ in $\kk^\times \times \on{exp}(\hat{{\t}}_{3}(\kk))$ satisfy 
\begin{itemize}
\item $(G_1 \circ G_2)(H^{1,2})=\lambda_1\lambda_2 H^{1,2}$,
\item $(G_1 \circ G_2)(a^{1,2,3})=  g_2 \left(\lambda_1 t_{12}, g_1(t_{12},t_{23}) 
(\lambda_1 t_{23})g_1(t_{12},t_{23})^{-1}\right) g_1(t_{12},t_{23}) a^{1,2,3}$.
\end{itemize}
We also already showed that any two $H_1$ and $H_2$ such that $(G_1,H_1)$ and $(G_2,H_2)$ 
lie in $\GRT^{\Gamma}(\KK)$ are determined by elements
$h_1(t_{01}|t^{0}_{12},\ldots,t^{N-1}_{12})$ and $h_2(t_{01}|t^{0}_{12},\ldots,t^{N-1}_{12})$
which represent automorphisms of the parenthesized word $(01_{\bar0})2_{\bar0}$ 
in the groupoid $G\PaCD^\Gamma(\kk)(2)$. 
Note that the group $\on{Aut}_{G\PaCD^\Gamma(\kk)(3)}((01_{\bar0})2_{\bar0})$ is canonically identified with 
$\exp\big(\hat{\t}_2^\Gamma\big)$. Within this identification, $t_{01}=K^{0,1_{\bar0}}$ for instance, but 
$t_{12}^0=b^{0,1_{\bar0},2_{\bar0}}H^{1_{\bar0},2_{\bar0}}(b^{0,1_{\bar0},2_{\bar0}})^{-1}$. 
Therefore 
\[
H_i(t_{01})=\lambda_i t_{01}
\qquad\textrm{and}\qquad
H_i(t_{12}^0)=\on{Ad}\big(h_i(t_{01}|t^{0}_{12},\ldots,t^{N-1}_{12})\big)(\lambda_i t_{12}^0)\,.
\]
More generally, 
$t_{12}^a=(L^{0,1_{\bar0}})^{(a)}b^{0,1_{\bar{a}},2_{\bar0}}H^{1_{\bar{a}},2_{\bar{a}}}
(b^{0,1_{\bar{a}},2_{\bar0}})^{-1}(L^{0,1_{\bar0}})^{(-a)}$, 
and thus 
\[
H_i(t_{12}^a)=\on{Ad}\big(h_i(t_{01},t^a_{12},\ldots,t^{\alpha+N-1}_{12})\big)(\lambda_i t_{12}^a)\,.
\]
Therefore, we compute 
\begin{eqnarray*} 
(H_1 \circ H_2)(b^{0,1_{\bar0},2_{\bar0}})&=& H_1\big(h_2(t_{01},t^{0}_{12},\ldots,t^{N-1}_{12})b^{0,1_{\bar0},2_{\bar0}}\big) \\
& = &	h_2\big(H_1(t_{01}),H_1(t^{0}_{12}),\ldots,H_1(t^{N-1}_{12})\big)
		h_1(t_{01},t^{0}_{12},\ldots,t^{N-1}_{12}) b^{0,1_{\bar0},2_{\bar0}} \\
& = &	h_2\Big(\lambda_1 t_{01},\on{Ad}\big(h_1(t_{01},t^0_{12},\ldots,t^{N-1}_{12})\big)(\lambda_1 t_{12}^0),\ldots\\
&   &	\ldots,\on{Ad}\big(h_1(t_{01},t^{N-1}_{12},\ldots,t^{2N-2}_{12})\big)(\lambda_1 t_{12}^{N-1})\Big)
		h_1(t_{01}|t^{0}_{12},\ldots,t^{N-1}_{12}) b^{0,1_{\bar0},2_{\bar0}}\,,
\end{eqnarray*}
which is nothing but the composition law in the group 
$\on{GRT}_{\bar 1}^\Gamma(\kk)$. This concludes the proof, 
as the composite of moperad morphisms $H_1 \circ H_2$ 
is compatible with the composition of operad morphisms $G_1 \circ G_2$.
\end{proof}
In the next section we will show, among other things, that this injective morphism is actually an isomorphism. 
We could prove surjectivity by proving that the relations from Remark \ref{PaCD:cyc:rel} completely determine 
$\PaCD^{\Gamma}(\KK)$ (which we believe is true), but we use instead the torsor structure.

\subsection{Bitorsors}
Our main goal in this final section is to promote the one-to-one correspondence from Theorem \ref{Ass:cyc:iso} to a bitorsor isomorphism. 

\medskip

Recall from \cite{En} that the group $\on{GTM}_{\bar 1}(N,\kk)$ acts freely and transitively on $\on{Ass}^\Gamma(\KK)$ 
from the left, in the following manner:  
\[
(\lambda,f,g) * (\lambda',\varphi',\psi') = (\lambda\lambda',\varphi'',\psi'')\,,  
\]
where $\varphi''(t_{12},t_{23}) := \varphi'(t_{12},t_{23}) f(e^{\lambda' t_{12}}, 
\Ad(\varphi'(t_{12},t_{23}))(e^{\lambda't_{23}}))$, and 
\begin{eqnarray*}
 \psi''(t^0_{12} | t^{0}_{23},\ldots,t^{N-1}_{23}) & := &
\psi'(t^0_{12} | t^{0}_{23},\ldots,t^{N-1}_{23})
\\ & &
g \Big(e^{ \lambda't^0_{12}} | 
\on{Ad}\big( \psi'(t^0_{12}  | t^{0}_{23},\ldots,t^{N-1}_{23}) \big) 
(e^{ \lambda' t^{0}_{23}}), \\ 
& & \Ad\big(e^{ (\lambda'/N)t^0_{12}} \psi'(t^0_{12} | 
t^{1}_{23}, \ldots, t^{N}_{23})\big)
(e^{ \lambda't^{1}_{23}}), \ldots \\ 
&  &
\ldots,\Ad\big((e^{ ((N-1)\lambda'/N)t^0_{12}}\psi'(t^0_{12} | 
t^{N-1}_{23}, \ldots, t^{2N-2}_{23})\big)
(e^{ \lambda' t^{N-1}_{23}}) \Big) 
\end{eqnarray*}

Enriquez also constructed \cite{En} a free and transitive right action of $\on{GRT}_{\bar 1}^\Gamma(\kk)$ 
on $\on{Ass}^{\Gamma}(\kk)$, which commutes with the above left action of $\on{GTM}_{\bar 1}(N,\kk)$, thus turning the triple 
\[
\big(\on{GTM}_{\bar 1}(N,\kk),\on{Ass}^\Gamma(\kk),\on{GRT}_{\bar 1}^\Gamma(\kk)\big)
\]
into a bitorsor.

For the sake of completeness, let us recall how this right action is defined. 
For every $\mu\in \kk^\times$, the group $\on{GRT}_{(\bar 1,1)}^\Gamma(\kk)$ acts on
\[
\on{Ass}_{\mu}^{\Gamma}(\kk):=\{ (\varphi,\psi )\in\on{exp}(\hat\t_3^0(\kk)) 
\times \on{exp}(\hat\bar{\t}_2^\Gamma(\kk))  ; (\mu,\varphi,\psi )\in  \on{Ass}^{\Gamma}(\kk)\}\,.
\]
from the right by $(\varphi,\psi) * (g,h) = (\varphi',\psi')$, where
\[
\varphi'(t_{12},t_{23}) = \varphi(t_{12},\Ad(g(t_{12},t_{23}))(t_{23}))g(t_{12},t_{23})
\]
and
\begin{eqnarray*} 
 \psi'(t_{01}|t^{0}_{12},\ldots,t^{N-1}_{12}) &=& \psi\Big(t_{01} | \Ad\big(h(t_{01}|t^{0}_{12},\ldots,t^{N-1}_{12})\big)(t^{0}_{12}), \ldots \\ 
 & & \ldots,
\Ad\big(h(t_{01}|t^{N-1}_{12},\ldots,t^{2N-2}_{12})\big)(t^{N-1}_{12}) \Big) h(t_{01}|t^{0}_{12},\ldots,t^{N-1}_{12})\,. 
\end{eqnarray*}
This naturally extends to an action of $\on{GRT}_{\bar 1}^\Gamma(\kk)$ on $\on{Ass}^{\Gamma}(\kk)$, which is compatible 
with the scaling action of $\kk^{\times}$ on $\kk$. 

We already know that, by definition, 
$$
\big(\widehat{\GT}^\Gamma(\kk),\Assoc^\Gamma(\kk),\GRT^\Gamma(\kk)\big)
$$
has a natural bitorsor structure. 
\begin{theorem}\label{thm-last-torsor}
There is a bitorsor isomorphism
\[
\big(\widehat{\GT}^{\Gamma}(\kk),\Assoc^{\Gamma}(\kk),\GRT^{\Gamma}(\kk)\big) \longrightarrow 
\big(\on{GTM}_{\bar1}(N,\kk),\on{Ass}^{\Gamma}(\kk),\on{GRT}_{\bar 1}^\Gamma(\kk)\big)\,.
\]
\end{theorem}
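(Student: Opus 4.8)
The strategy is to leverage the bitorsor structure that comes for free from the operadic definitions, together with the one-to-one correspondences already established: Proposition~\ref{Cyc:GT} (the injection $\widehat{\GT}^\Gamma(\kk)\hookrightarrow\on{GTM}_{\bar1}(N,\kk)$), Theorem~\ref{Ass:cyc:iso} (the bijection $\Assoc^\Gamma(\kk)\cong\on{Ass}^\Gamma(\kk)$), and Proposition~\ref{GRT:cyc:cor} (the injection $\GRT^\Gamma(\kk)\hookrightarrow\on{GRT}_{\bar1}^\Gamma(\kk)$). Since $\big(\widehat{\GT}^\Gamma(\kk),\Assoc^\Gamma(\kk),\GRT^\Gamma(\kk)\big)$ is a bitorsor by construction (the left action of $\Gamma$-equivariant moperad automorphisms of $(\widehat\PaB,\widehat\PaB^\Gamma)$ and the right action of $\Gamma$-equivariant moperad automorphisms of $(G\PaCD,G\PaCD^\Gamma)$ on the set of $\Gamma$-equivariant moperad isomorphisms between these pairs are manifestly free and transitive, as in the classical case~\eqref{bitorsor:cl}), and since $\big(\on{GTM}_{\bar1}(N,\kk),\on{Ass}^\Gamma(\kk),\on{GRT}_{\bar1}^\Gamma(\kk)\big)$ is a bitorsor by \cite{En}, it suffices to check that the bijection $\Assoc^\Gamma(\kk)\to\on{Ass}^\Gamma(\kk)$ from Theorem~\ref{Ass:cyc:iso} is equivariant with respect to the two injective group morphisms. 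A standard lemma then upgrades this to a bitorsor isomorphism and, as a bonus, forces the two injections to be surjective.

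First I would make explicit the bitorsor structure on the operadic side: given $(F,G)\in\widehat{\GT}^\Gamma(\kk)$, $(\tilde F,\tilde G)\in\Assoc^\Gamma(\kk)$, and $(G',H')\in\GRT^\Gamma(\kk)$, the actions are simply $(F,G)\cdot(\tilde F,\tilde G)=(\tilde F\circ F,\tilde G\circ G)$ and $(\tilde F,\tilde G)\cdot(G',H')=(G'\circ\tilde F,H'\circ\tilde G)$; freeness and transitivity follow because an element of $\Assoc^\Gamma(\kk)$ is an isomorphism of moperads, so pre- and post-composition by isomorphisms are torsor actions. Next I would verify the left-equivariance: take $(F,G)\in\widehat{\GT}^\Gamma(\kk)$ corresponding to $(\lambda,f,g)\in\on{GTM}_{\bar1}(N,\kk)$ under~\eqref{GT-vers-GT}, and $(\tilde F,\tilde G)\in\Assoc^\Gamma(\kk)$ corresponding to $(\mu,\varphi,\psi)$ under Theorem~\ref{Ass:cyc:iso}. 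Then $\tilde G\circ G$ is determined by its values on the generators $E^{0,1_{\bar0}}$ and $\Psi^{0,1_{\bar0},2_{\bar0}}$, which I would compute directly:
\begin{align*}
(\tilde G\circ G)(E^{0,1_{\bar0}}) &= \tilde G\big(u_{F}\, E^{0,1_{\bar0}}\big) = \tilde G(u_F)\, e^{\frac\mu N t_{01}}\, L^{0,1_{\bar0}}\,,\\
(\tilde G\circ G)(\Psi^{0,1_{\bar0},2_{\bar0}}) &= \tilde G\big(v_{F}\, \Psi^{0,1_{\bar0},2_{\bar0}}\big) = \tilde G(v_F)\, \psi^{0,1,2}\, b^{0,1_{\bar0},2_{\bar0}}\,,
\end{align*}
where $u_F = x^{N\mu_1}$, $v_F = g(x_{01},x_{12})$, and the point is that applying $\tilde G$ turns each generator of $\widehat\PB_2^\Gamma$ (i.e.\ $X$, $y(0),\dots,y(N-1)$) into the corresponding image computed exactly as in the proof of Proposition~\ref{Cyc:GT}, namely $\on{Ad}$-conjugates of $e^{\mu t^a_{12}/2}$-type elements by $\psi$. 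Matching this with Enriquez's formula for $(\lambda,f,g)*(\mu,\varphi,\psi)$ recalled in the \S~above is then the same computation as the proof of Proposition~\ref{Cyc:GT}, with $e^{\mu t/2}$ in place of the relevant braid generators — so I would present it as "the same calculation, \emph{mutatis mutandis}". The right-equivariance with respect to $\GRT^\Gamma(\kk)\hookrightarrow\on{GRT}_{\bar1}^\Gamma(\kk)$ is entirely parallel, using the formulas in the proof of Proposition~\ref{GRT:cyc:cor} for $H'(b^{0,1_{\bar0},2_{\bar0}})$, $H'(K^{0,1_{\bar0}})$, $H'(L^{0,1_{\bar0}})$.

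Finally I would invoke the following elementary lemma: if $(\Gamma_1,T_1,\Delta_1)$ and $(\Gamma_2,T_2,\Delta_2)$ are bitorsors, $\iota\colon\Gamma_1\to\Gamma_2$ and $\kappa\colon\Delta_1\to\Delta_2$ are group morphisms, and $\theta\colon T_1\to T_2$ is a bijection that is $(\iota,\kappa)$-equivariant, then $\iota$ and $\kappa$ are automatically isomorphisms and $(\iota,\theta,\kappa)$ is a bitorsor isomorphism. (Proof: fix $t\in T_1$; transitivity of the left $\Gamma_1$-action and of the left $\Gamma_2$-action, plus bijectivity of $\theta$, identifies both $\iota$ and its would-be inverse; similarly on the right.) Combined with the equivariance checks above, this gives the theorem, and as announced after Proposition~\ref{GRT:cyc:cor} it shows \emph{a posteriori} that~\eqref{GT-vers-GT} and~\eqref{eqn:GRT-to-GRT} are isomorphisms. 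The main obstacle is bookkeeping: one must be careful that the normalizations match — the factor $\mu/N$ on $E$ versus $\mu/2$ on $\Psi$, the passage from $x_{ij}$ to $e^{\mu t_{ij}/2}$, and the $\Gamma$-action conventions on labels — so that the operadic action formulas reproduce Enriquez's formulas \emph{on the nose} rather than up to an automorphism of $\Gamma$; this is exactly the subtlety flagged in the Remark after Theorem~\ref{Ass:cyc:iso} about $\mathbf{Pseudo}_\gamma(N,\kk)$, and I would address it by fixing once and for all that we work with $\gamma=\bar1$ throughout, as in the definitions of $\widehat{\GT}^\Gamma$, $\Assoc^\Gamma$, and $\GRT^\Gamma$.
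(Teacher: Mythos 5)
Your proposal is correct and follows essentially the same route as the paper: assemble the two injective group morphisms (Propositions \ref{Cyc:GT} and \ref{GRT:cyc:cor}) and the bijection of Theorem \ref{Ass:cyc:iso}, check that the bijection is equivariant for the actions on both sides (a computation parallel to the proofs that the two maps are group morphisms, which the paper likewise leaves to the reader), and use the torsor structure to deduce that the injections are in fact isomorphisms. Your explicit statement of the elementary lemma about equivariant bijections between torsors is only implicit in the paper, but it is exactly the mechanism the authors invoke to obtain surjectivity.
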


\begin{proof}
This is a summary of most of the above results. Indeed, we proved that 
\begin{itemize}
\item There is an injective group morphism $\widehat{\GT}^\Gamma(\kk)\to\on{GTM}_{\bar1}(N,\kk)$ (Proposition \ref{Cyc:GT});
\item There is an injective group morphism $\GRT^\Gamma(\kk)\to\on{GRT}_{\bar 1}^\Gamma(\kk)$ (Proposition \ref{GRT:cyc:cor}); 
\item There is a bijection $\Assoc^{\Gamma}(\kk)\to \on{Ass}^{\Gamma}(\kk)$ (Theorem \ref{Ass:cyc:iso}). 
\end{itemize}
Hence, in order to conclude it is sufficient to prove that the three above maps take the respective actions 
$\widehat{\GT}^\Gamma(\kk)$ and $\GRT^\Gamma(\kk)$ on $\Assoc^\Gamma(\kk)$, to the ones 
of $\on{GTM}_{\bar1}(N,\kk)$ and $\on{GRT}_{\bar 1}^\Gamma(\kk)$ on $\on{Ass}^\Gamma(\kk)$. 
The proof is similar to the proofs that $\widehat{\GT}^\Gamma(\kk)\to\on{GTM}_{\bar1}(N,\kk)$ and 
$\GRT^\Gamma(\kk)\to\on{GRT}_{\bar 1}^\Gamma(\kk)$ are group morphisms (see the proofs of Propositions \ref{Cyc:GT} 
and \ref{GRT:cyc:cor}), and is left to the reader. 
\end{proof}

\clearpage
\section*{List of notation}
\printnoidxglossaries
\clearpage

\end{document}